\documentclass[11pt]{article}
\usepackage[T1]{fontenc}
\usepackage[utf8]{inputenc}
\usepackage{lmodern}
\usepackage[a4paper,margin=3cm]{geometry}
\usepackage{amsmath,amssymb,amsthm,mathtools,booktabs}
\usepackage{microtype}
\usepackage{graphicx,tikz}
\usepackage[labelsep=period]{caption}
\usetikzlibrary{shapes.geometric}
\usepackage[hidelinks]{hyperref}
\hypersetup{pdftitle={A sharp density bound for 5-connected graphs with no K7= minor},pdfauthor={Caibing Chang, Zijian Deng, Qinfei Tang, Caihong Yang},pdfsubject={Graph minors and extremal graph theory}}
\newtheorem{theorem}{Theorem}[section]
\newtheorem{lemma}[theorem]{Lemma}
\theoremstyle{remark}
\newtheorem{remark}[theorem]{Remark}
\numberwithin{equation}{section}
\newcommand{\Ke}{K_7^{=}}
\newcommand{\W}{\mathcal W}
\newcommand{\T}{\mathcal T}
\newcommand{\Sc}{\mathcal S}
\newcommand{\dd}{\partial}
\newcommand{\needroom}[1]{\par\begingroup\skip0=\lastskip\vskip-\skip0\dimen0=#1\relax\advance\dimen0 by\skip0\vskip0pt plus\dimen0\penalty-100\vskip0pt plus-\dimen0\vskip\dimen0\penalty9999\vskip-\dimen0\vskip\skip0\endgroup}

\title{A sharp density bound for 5-connected graphs\\with no $\Ke$ minor}
\author{%
Caibing Chang\textsuperscript{1}\thanks{Email: \href{mailto:changcaibing2018@163.com}{changcaibing2018@163.com}.},\quad
Zijian Deng\textsuperscript{2}\thanks{Email: \href{mailto:zj1329205716@163.com}{zj1329205716@163.com}.},\quad
Qinfei Tang\textsuperscript{3}\thanks{Email: \href{mailto:tqf9500@126.com}{tqf9500@126.com} (corresponding author).}\quad
and~Caihong Yang\textsuperscript{4}\thanks{Email: \href{mailto:yangch@cug.edu.cn}{yangch@cug.edu.cn}.}\\[0.9em]
\small\textsuperscript{1}Faculty of Science\\
\small Guangdong University of Petrochemical Technology, Maoming, Guangdong 525000, China\\[0.4em]
\small\textsuperscript{2}School of Mathematics and Statistics\\
\small Lingnan Normal University, Zhanjiang, Guangdong 524048, China\\[0.4em]
\small\textsuperscript{3}Department of Mathematics Research\\
\small Fujian Institute of Education, Fuzhou, China\\[0.4em]
\small\textsuperscript{4}School of Mathematics and Physics\\
\small China University of Geosciences, Wuhan, China}
\date{}

\begin{document}
\maketitle
\begin{abstract}
Let $\Ke$ be obtained from $K_7$ by deleting two independent edges.
We prove that every 5-connected graph on $n\ge7$ vertices with at least $4n-9$ edges contains a $\Ke$ minor, settling Conjecture~1.4 of Dvo\v r\'ak, Norin and Rahman (arXiv preprint 2609.17760v1).
The bound is sharp. We prove the stronger statement that every $4$-bilight graph on $n\ge4$ vertices with at least $4n-9$ edges contains either a $\Ke$ minor or a $K_6$ subgraph.
Within their reduction framework, we strengthen the rooted-minor theorem. We show that every $4$-light 5-rooted graph of rooted $4$-density at least two has a model with two nonroot vertices and at most one missing edge incident with them.
At the critical density, reductions preserve density exactly, which prevents them from creating a new $K_6$ subgraph.

\medskip
\noindent\textbf{Keywords}\quad Graph minor, extremal graph theory, rooted minor, connectivity.

\smallskip
\noindent\textbf{MSC 2020}\quad 05C35, 05C83, 05C40.
\end{abstract}

\section{Introduction}
All graphs are finite and simple, and $e(G)=|E(G)|$. Extremal graph-minor theory asks how many edges force a prescribed graph as a minor. Complete targets have been central to this question since Mader's work~\cite{Mad68}. Thomason~\cite{Tho01} determined the asymptotic extremal function for large complete minors. For small complete targets, J{\o}rgensen~\cite{Jor94} determined the extremal function for $K_8$ minors, and Song and Thomas~\cite{ST06} did so for $K_9$ minors. For a nearly complete target, Song~\cite{Son05} determined the extremal function for $K_8^-$ minors.

Here the target is $\Ke$, obtained from $K_7$ by deleting two independent edges. It lies between $K_6$ and $K_7$ in the minor order, since contracting an edge joining endpoints of its two missing edges gives a $K_6$ minor. We write $K_7^-$ for deletion of a single edge and $K_7^{\vee}$ for deletion of two edges with a common end. We study the exact density threshold for $\Ke$ under 5-connectivity, where separations of order at most four are excluded.

Jakobsen's earlier extremal theorem concerns the family consisting of $\Ke$ and $K_7^{\vee}$. In the formulation recalled by Rolek~\cite{Rol20}, every graph on $n\ge6$ vertices with at least $4n-9$ edges either contains one of these two graphs as a minor or is a $(K_6,3)$-cockade. Such a cockade is obtained recursively from copies of $K_6$ by identifying triangles in two previously constructed graphs. For 5-connected graphs on at least seven vertices, the cockade exception is excluded, but the conclusion still allows either deletion pattern. Theorem~\ref{thm:main} guarantees the specified graph $\Ke$ at the same edge threshold.

Dvo\v r\'ak, Norin and Rahman~\cite[Theorem~1.3]{DNR26} proved that every 5-connected graph on $n\ge6$ vertices with at least $4n-7$ edges contains a $\Ke$ minor. Their density theorem shows that $\Ke$-minor-free graphs are 6-colourable. They conjectured that the threshold could be lowered to $4n-9$ when $n\ge7$. We prove this conjecture.
\begin{theorem}\label{thm:main}
Every 5-connected graph $G$ with $n\ge7$ vertices and $e(G)\ge4n-9$ contains $\Ke$ as a minor.
\end{theorem}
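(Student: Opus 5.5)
The plan is to derive Theorem~\ref{thm:main} from the stronger statement announced in the abstract: every $4$-bilight graph on $n\ge4$ vertices with at least $4n-9$ edges contains either a $\Ke$ minor or a $K_6$ subgraph. A 5-connected graph $G$ should be $4$-bilight in the sense of the Dvo\v r\'ak--Norin--Rahman framework. Lightness conditions of this type restrict small separations and low-degree configurations, and 5-connectivity rules out every separation of order at most four. So the stronger statement applies to $G$, and it gives either a $\Ke$ minor, in which case we are done, or a subgraph $H\cong K_6$.

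In the second case we use $n\ge7$ and 5-connectivity to upgrade $H$ to a $\Ke$ minor. Let $C$ be a component of $G-V(H)$. Since $G$ is 5-connected and $V(H)$ separates $C$ from nothing else, $C$ has neighbours in at least five vertices of $H$, and these can be joined through $C$. Contract $C$ to a single vertex $c$; then $c$ is adjacent to at least five of the six vertices of $K_6$. The result contains $K_7$ minus at most one edge, hence $K_7^-\supseteq\Ke$ as a subgraph. So $G$ has a $\Ke$ minor.

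The real work is the stronger statement, and I would prove it by minimal counterexample within the reduction framework. Take $G$ $4$-bilight with $e(G)\ge 4n-9$, no $\Ke$ minor, no $K_6$ subgraph, and $|V(G)|$ minimal. The framework's reductions contract edges or delete vertices at small separations. At the critical density $4n-9$ these reductions preserve density exactly, so the reduced graph is again a candidate counterexample, and minimality forbids it. The key point, which I would verify carefully, is that exact preservation forbids creating a new $K_6$. If a contraction produced a $K_6$, then counting edges around the contracted pair would show that edges were lost, that is, strictly more than the allowed $4$ per vertex, contradicting exactness. After all reductions are exhausted we reach an irreducible configuration. There, a vertex of degree near $8$ or a small separator yields a $5$-rooted graph of rooted $4$-density at least two.

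The main obstacle is the strengthened rooted-minor lemma: every $4$-light 5-rooted graph of rooted $4$-density at least two has a model with two nonroot vertices and at most one missing edge incident with them. Combined with the five roots inducing a near-complete structure on the other side of the separation, such a model completes to $K_7$ minus two independent edges. To prove the lemma I would again argue by minimal counterexample. I would contract edges inside the rooted graph while keeping $4$-lightness, and then do a case analysis on how the two branch sets meet the roots, to show that the missing edges can be forced into a single non-adjacent pair. I expect this case analysis to be the hardest part.
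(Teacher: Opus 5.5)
Your derivation of the theorem from the $4$-bilight statement is the paper's own argument. For disjoint, nonadjacent, nonempty $Y,Z$ with $|\dd_GY|\le4$, the set $\dd_GY$ would be a cut of order at most four separating $Y$ from $Z$, so 5-connected graphs are $4$-bilight; state this explicitly rather than appealing to what lightness conditions ``of this type'' do. In the $K_6$ case, a component of $G-V(H)$ attaches to at least five vertices of $H$, since otherwise its attachments would separate it from the rest of $H$, and contracting it gives $K_7^-$.

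The part you call the real work, however, has a gap: its key step fails.

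\emph{Contractions.} Exact density preservation does not stop an edge contraction from creating a $K_6$. Contracting $uv$ changes $\rho_4$ by $3-|N(u)\cap N(v)|$, so exactness only says that $u,v$ have three common neighbours. That is compatible with a new $K_6$ through the contracted vertex: take a $K_5$ on $\{1,\dots,5\}$ with $u$ adjacent to $1,2,3,4$ and $v$ adjacent to $2,3,4,5$. No edge count around $uv$ rules this out. The paper needs a separate ingredient, Lemma~\ref{lem:noK5}: a minimal counterexample has no $K_5$ subgraph. For a 5-clique $C$, the rooted graph $(G,C)$ is $4$-light and has rooted density $-9+20-10=1$. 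It is quite heavy because no vertex is complete to $C$, so Theorem~\ref{thm:inputs}(3) gives a $\W$ model, which together with the root clique is a $\Ke$ model. This lemma also guarantees a missing root edge in the orientation of 5-separations at density $-9$, and it drives the final analysis of $G[N[v]]$.

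\emph{Fragment reductions.} Exactness is not automatic here either. Replacing a fragment $Y$ by a boundary clique or dart raises the density by $-\rho_4(G,Y)+a\ge0$. The paper forces this increment to vanish, so that no edge is added among surviving vertices, only by first proving the $4n-8$ statement (Theorem~\ref{thm:aux}) by its own minimal-counterexample argument and then applying it to the reducent.

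\emph{Bilightness under contraction.} The contracted graph is a candidate counterexample only if contraction preserves $4$-bilightness. That is where Theorem~\ref{thm:five} actually enters: through Lemmas~\ref{lem:joining} and~\ref{lem:orienting} on 5-separations, which feed the noncrossing and contraction-stability lemmas, not at a vertex of degree near eight.
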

We use the notion of $4$-bilightness introduced in~\cite{DNR26}. For a nonempty set $Y\subseteq V(G)$, put
\[
\dd_G Y=N_G(Y)\setminus Y,\qquad \rho_4(G,Y)=e(G[Y])+e_G(Y,\dd_GY)-4|Y|.
\]
The graph $G$ is \emph{$4$-bilight} if there do not exist disjoint, nonadjacent, nonempty vertex sets $Y,Z$ such that
\[
|\dd_GY|,|\dd_GZ|\le4,\qquad \rho_4(G,Y)>0,\qquad \rho_4(G,Z)>0.
\]
Thus $4$-bilightness rules out two separated regions that both have positive density behind boundaries of size at most four. Every 5-connected graph is $4$-bilight. Indeed, a set with boundary at most four cannot have both itself and the complement of its closed neighbourhood nonempty.

We prove the following stronger statement for $4$-bilight graphs.
\begin{theorem}\label{thm:bilight}
Let $G$ be a $4$-bilight graph with $n\ge4$ vertices and at least $4n-9$ edges. Then $G$ contains $\Ke$ as a minor or contains $K_6$ as a subgraph.
\end{theorem}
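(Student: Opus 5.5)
We argue by a minimal counterexample. Suppose $G$ is a $4$-bilight graph on $n\ge4$ vertices with $e(G)\ge4n-9$ that has no $\Ke$ minor and no $K_6$ subgraph, and choose it with $|V(G)|+e(G)$ minimal. Small cases are checked directly. For $n\le6$ we have $4n-9>\binom n2$ unless $n=6$. When $n=6$, the bound forces $e(G)\ge15$, so $G=K_6$, which is excluded. So we may assume $n\ge7$. Deleting an edge preserves $4$-bilightness only in a monotone sense, since $\rho_4$ can only drop. Hence, if $e(G)>4n-9$, deleting an edge gives a smaller counterexample. We may therefore assume $e(G)=4n-9$ exactly, so $G$ sits at the critical density.

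\textbf{Step 1 (local structure).} Minimality gives structural restrictions via contractions. Contract an edge $uv$ lying in $t$ triangles. The contracted graph has $n-1$ vertices and $e(G)-1-t$ edges, so it stays at density $\ge 4(n-1)-9$ whenever $t\le3$. We must check that contraction preserves $4$-bilightness. We do this as in the reduction framework of \cite{DNR26}: a violating pair $Y,Z$ in $G/uv$ lifts to a violating pair in $G$, since boundaries do not grow and $\rho_4$ does not drop under uncontracting.

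We must also rule out that the contraction creates a $K_6$ subgraph. Here we use that the density is exactly preserved when $t=3$. Suppose a new $K_6$ appears containing the contracted vertex. Then $G$ contains a dense set whose $\rho_4$ computation, compared with the exact edge count, contradicts either bilightness or the no-$K_6$ assumption. This yields that every edge lies in at least $4$ triangles, and that the minimum degree is at least $5$. Low-degree vertices are handled by deletion, since removing a vertex of degree $\le4$ keeps $e\ge4(n-1)-9$.

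\textbf{Step 2 (small separations).} Suppose $G$ has a separation $(A,B)$ of order at most $4$. Because $G$ is $4$-bilight, at most one side carries positive $\rho_4$. We then replace the sparse side by a small gadget, such as a clique on the separator, and apply minimality to the other side. We lift the resulting $\Ke$ minor back using a rooted model on the sparse side. This is where the rooted-minor theorem mentioned in the abstract enters. It says that a $4$-light $5$-rooted graph of rooted $4$-density at least two has a model with two nonroot vertices and at most one missing edge among them. Such a model realises the gadget as a minor up to one missing edge, and $\Ke$ tolerates two missing independent edges. With this we reduce to the case where $G$ is essentially $5$-connected apart from trivial separations.

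\textbf{Step 3 (finding the minor).} Take a vertex $v$ of minimum degree $d\le7$; one exists since the average degree is below $8$. Because every edge at $v$ lies in at least $4$ triangles, $G[N(v)]$ has minimum degree at least $4$. We then apply a Mader-type analysis to $G[N(v)]$ together with connectivity of $G-N[v]$. Contracting a component of $G-N[v]$ onto $N(v)$ adds edges. This should yield a $K_6^{=}$-type minor in the neighbourhood, which together with $v$ gives $\Ke$.

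The main obstacle is Step 2: showing that lifting across $4$-separations, and the density-exact contractions, never produce a spurious $K_6$. The sharp case $e=4n-9$ is exactly where $(K_6,3)$-cockade-like structures live. I would first try to prove that any new $K_6$ in $G/uv$ forces a set $Y$ with $|\dd_GY|\le4$ and $\rho_4(G,Y)>0$ on both sides, contradicting $4$-bilightness.
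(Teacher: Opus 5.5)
Your outline has the same skeleton as the paper (minimal counterexample, reductions across small separations, contraction of edges in at most three triangles, and analysis of a low-degree neighbourhood). However, the steps that carry the difficulty are either false as stated or left open.

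In Step~1 the lifting claim fails in both directions. If the contraction vertex $w$ lies in $\dd_JY$ and both $u,v$ have neighbours in $Y$, then $|\dd_GY|=5$, so $Y$ is no longer a $(\le4)$-fragment of $G$. If $w\in Y$, then $(Y\setminus\{w\})\cup\{u,v\}$ gains a vertex but only the edge $uv$ and the merged edges, so its density can drop by up to three. Handling exactly these two cases is the content of Lemma~\ref{lem:contractstable}:
the interior case becomes a reducible fragment, which is excluded by irreducibility;
the boundary case gives a dense $(\le5)$-bifragment whose 5-boundary members are quite heavy, which is excluded by Lemma~\ref{lem:noncrossing}.
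Lemma~\ref{lem:noncrossing} rests on the 5-separation analysis of Lemmas~\ref{lem:orienting} and~\ref{lem:joining}, and that is where Theorem~\ref{thm:five} enters. Its single allowed defect may be any of the eleven nonroot-incident edges, not only the edge between the nonroots as you state. This lets one root edge stay missing on the other side of a \emph{5}-separation. Your Step~2 instead applies it to separations of order at most four, where the paper uses clique and dart reductions. Moreover, replacing a sparse side by a clique is a minor operation only when that side has a rooted boundary clique or dart model; this is why only reducible fragments are reduced. Your edge-deletion step is also unjustified. Suppose $yz$ is the only edge between $Y$ and $Z$, and both sets have 5-vertex boundaries and density two. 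Then $G-yz$ has a dense $(\le4)$-bifragment, so deleting edges need not preserve $4$-bilightness.

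The second missing idea is how to prevent new $K_6$ subgraphs; you flag this problem but do not resolve it. The paper first proves Theorem~\ref{thm:aux} at $4n-8$. Applied to a reducent $J$, it forces $\rho_4(J)\le-9$, while $\rho_4(J)-\rho_4(G)=-\rho_4(G,Y)+a\ge0$. Hence $\rho_4(G,Y)=0$ and $a=0$, so no edge is added between surviving vertices and $J$ has no new $K_6$ (Lemma~\ref{lem:criticalirreducible}). The same theorem gives $\rho_4(G)=-9$ directly, with no edge deletion.

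For contractions, the paper shows that $G$ has no $K_5$ subgraph (Lemma~\ref{lem:noK5}). For a 5-clique $C$, irreducibility makes $(G,C)$ $4$-light, its rooted density is $1$, and no vertex meets all of $C$. So it is quite heavy, and a $\W$ model plus the root clique gives $\Ke$. A new $K_6$ through the contracted vertex would then contain a $K_5$ of $G$.

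The no-$K_5$ lemma is also needed for $d=9$ in Lemma~\ref{lem:orienting}, and in your Step~3, which cannot be a purely local Mader-type argument. Suppose $Q=N[v]$ and the complement of $G[N(v)]$ is $C_4\sqcup K_2\sqcup K_1$. Then $G[Q]$ has $23=4|Q|-9$ edges, minimum degree five and no $K_6$. Yet deleting a vertex or contracting an edge always leaves fewer than $19$ edges or two missing edges with a common end, so $G[Q]$ has no $\Ke$ minor. The paper excludes this configuration only because it contains a $K_5$. Finally, the bound $e(G[Q])\ge4|Q|-9$ itself comes from Lemma~\ref{lem:neighbourhood}, again via irreducibility.
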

\begin{proof}[Proof of Theorem~\ref{thm:main} from Theorem~\ref{thm:bilight}]
Apply Theorem~\ref{thm:bilight}. Suppose that $G$ contains a 6-vertex clique $C$. Since $n\ge7$, there is a nonempty component $U$ of $G-C$. 5-connectivity gives $|N_G(U)\cap C|\ge5$. Contracting $U$ to a vertex and retaining $C$ gives a $K_7^{-}$ minor, which contains $\Ke$.
\end{proof}
We also prove the following auxiliary theorem.
\begin{theorem}\label{thm:aux}
Every $4$-bilight graph with $n\ge3$ vertices and at least $4n-8$ edges contains $\Ke$ as a minor.
\end{theorem}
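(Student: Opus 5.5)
The first step is to use Theorem~\ref{thm:bilight} as a black box and deal only with its $K_6$ outcome. Since $4n-8>\binom n2$ for $n\le6$, the hypothesis forces $n\ge7$. Also $4n-8\ge4n-9$, so Theorem~\ref{thm:bilight} gives either a $\Ke$ minor, and we are done, or a $6$-clique $C\subseteq G$. In the second case, if some component $U$ of $G-C$ has $|N_G(U)\cap C|\ge5$, then contracting $U$ gives a $K_7^-$ minor and hence a $\Ke$ minor, exactly as in the proof of Theorem~\ref{thm:main}. So we may assume that every component $U$ of $G-C$ has $\dd_GU\subseteq C$ with $|\dd_GU|\le4$.

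Next we count edges. Every edge of $G$ lies in $C$ or in $G[U]\cup E_G(U,\dd_GU)$ for exactly one component $U$. Hence
\[
e(G)=15+\sum_U\bigl(\rho_4(G,U)+4|U|\bigr)\ge4n-8=16+4\sum_U|U|,
\]
so $\sum_U\rho_4(G,U)\ge1$. Therefore some component $U$ has $\rho_4(G,U)>0$. Distinct components are disjoint and nonadjacent, and each has boundary of size at most four, so $4$-bilightness implies that \emph{exactly one} component $U$ has positive $\rho_4$. Put $S=\dd_GU$ and $s=|S|$.

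Now set $H=G[U\cup S]$, in which $S$ is a clique. Then $e(H)\ge4|U|+1+\binom s2$, which is at least $4|V(H)|-9$ for every $s\le4$ (for $s=4$ the two sides are equal, namely $7\ge7$). Moreover $|V(H)|<n$, because $C\setminus S\neq\emptyset$. I would verify that $H$ is again $4$-bilight. Take $Y,Z\subseteq V(H)$ witnessing a violation in $H$. Each of them, or its union with the part of $G$ lying beyond $S$, should give a violating pair in $G$: the clique $S$ separates $U$ from the rest, and $\rho_4$ can only grow when the outside of $S$ is added to a set that contains all of $S$. Applying Theorem~\ref{thm:bilight} to $H$ then gives a $\Ke$ minor of $H\subseteq G$, or a $6$-clique $C'\subseteq H$.

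The main obstacle is to rule out this second $6$-clique producing a loop. I would handle it by an extremal choice made at the start: among all pairs $(C,U)$ with $C$ a $6$-clique, $U$ a component of $G-C$ and $\rho_4(G,U)>0$, choose one with $|U|$ minimal. The clique $C'$ must contain a vertex of $U$, since $S$ alone has at most four vertices. Then I expect the unique positive component of $G-C'$ to lie strictly inside $U$. Indeed, everything outside $U\cup S$, together with $C\setminus S$, is attached to $C'$ only through $S$. By the edge count it therefore forms a region of nonpositive $\rho_4$, or else it gives a second positive region and contradicts bilightness. This would contradict the minimality of $|U|$. The delicate point is the bookkeeping when $C'\cap S\neq\emptyset$ and the outside region merges with parts of $U$. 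If this cannot be closed directly, the fallback is to use the additional room in the case $s\le3$, where we have the stronger bound $e(H)\ge4|V(H)|-8$, and apply induction on $n$ within Theorem~\ref{thm:aux} itself. This leaves $s=4$ as the only critical case, and I would treat that case by contracting the outside of $S$ onto $C\setminus S$ in an attempt to build $\Ke$ directly.
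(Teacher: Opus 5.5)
\textbf{The proof is circular.} You use Theorem~\ref{thm:bilight} as a black box, but in the paper the dependence runs the other way: Theorem~\ref{thm:aux} is proved first and is an indispensable input to Theorem~\ref{thm:bilight}.
\begin{itemize}
\item The minimal counterexample for Theorem~\ref{thm:bilight} gets $\rho_4(G)=-9$ from Theorem~\ref{thm:aux}.
\item More importantly, Lemma~\ref{lem:criticalirreducible} applies Theorem~\ref{thm:aux} to the reducent $J$ to get $\rho_4(J)\le-9$. This forces $a=0$, so the reduction adds no edge and cannot create a new $K_6$.
\end{itemize}
Without this, a reduction may add boundary edges and produce a $K_6$ that $G$ does not contain, and the minimal-counterexample argument for Theorem~\ref{thm:bilight} fails. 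You give no independent proof of Theorem~\ref{thm:bilight}, so your proposal establishes nothing. A direct proof is needed, and this is what the paper supplies:
\begin{itemize}
\item A minimal counterexample has $\rho_4(G)=-8$ by~\eqref{eq:known}.
\item It is $3$-irreducible by Lemma~\ref{lem:stability}(2).
\item Contractions preserve $4$-bilightness by Lemma~\ref{lem:contractstable} with $b=3$, $d=8$. Minimality then forces every edge into at least four triangles.
\item A vertex $v$ of degree $5$--$7$ has $e(G[N[v]])\ge4|N[v]|-8$ by Lemma~\ref{lem:neighbourhood}.
\item The analysis of the complement of $G[N(v)]$ leaves only $G[N[v]]=K_4\vee2K_2$.
\item The two pairs, with their attached components, form a dense $(\le4)$-bifragment, contradicting $4$-bilightness.
\end{itemize}

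\textbf{Even granting Theorem~\ref{thm:bilight}, the passage to $H=G[U\cup S]$ is incomplete.} Your claim that $H$ inherits $4$-bilightness fails as stated. Let $Y$ be a fragment of $H$ meeting $S$, and put $W=V(G)\setminus(U\cup S)$. Then $\rho_4(G,Y\cup W)=\rho_4(H,Y)+\rho_4(G,W)$, where
\[
\rho_4(G,W)=\binom{6-s}{2}+s(6-s)-4(6-s)+\sum_{U'\ne U}\rho_4(G,U').
\]
The first three terms total $-9,-5,-2,0,1$ for $s=0,\dots,4$. The other components of $G-C$ may have negative density. So the assertion that $\rho_4$ ``can only grow'' is false.

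The termination problem, where $H$ contains a second $6$-clique, is left at ``I expect''. Your fallback induction for $s\le3$ needs the same unproved bilightness of $H$.

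The case $s=4$ is not actually critical. If $\rho_4(G,U)=1$, then $\rho_4(G,W)\ge1$, and $(U,W)$ is a dense bifragment with both boundaries equal to $S$. Hence $e(H)\ge4|V(H)|-8$ holds in every case.

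These points would all need repair, but the decisive gap is the circular reliance on Theorem~\ref{thm:bilight}.
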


The bound in Theorem~\ref{thm:main} is sharp.
For each $\ell\ge4$, let $G_\ell=P_3\vee C_\ell$, where $\vee$ denotes the join of two disjoint graphs. Then $n=\ell+3$ and
\[
e(G_\ell)=2+\ell+3\ell=4n-10.
\]
Deleting at most four vertices leaves a connected graph. If both joined parts survive, this is immediate. If all three vertices of $P_3$ are deleted, at most one vertex of the cycle is deleted. Deleting the entire cycle is possible only when $\ell=4$, in which case $P_3$ survives. Thus $G_\ell$ is 5-connected. Deleting the middle vertex of $P_3$ leaves the planar bipyramid $2K_1\vee C_\ell$. An apex graph has no $K_6$ minor. A model avoiding the apex would lie in a planar graph, and deleting the branch set containing the apex from any other $K_6$ model would leave a $K_5$ model in a planar graph. Since $\Ke$ contains a $K_6$ minor (contract an edge joining endpoints of the two missing edges), $G_\ell$ has no $\Ke$ minor.

The exception in Theorem~\ref{thm:bilight} is also necessary. For $t\ge1$, consider
\begin{equation}\label{eq:Ft}
F_t=K_4\vee(K_2\sqcup tK_1).
\end{equation}
This graph has $n=t+6$ vertices and $4n-9$ edges. Two nonempty nonadjacent sets avoid the universal $K_4$. A set outside this $K_4$ with boundary at most four has positive $4$-density precisely when it contains both vertices of the distinguished $K_2$. Hence two such positive sets cannot be disjoint, and $F_t$ is $4$-bilight.

To see that $F_t$ has no $\Ke$ minor, suppose that a model exists. At most four of its seven branch sets meet the universal $K_4$, so at least three branch sets avoid it. Every induced subgraph of $\Ke$ on at least three vertices is connected. Those branch sets must therefore lie in a single component of $K_2\sqcup tK_1$, which has at most two vertices, a contradiction. Finally, $F_t$ contains a $K_6$ subgraph, as required by Theorem~\ref{thm:bilight}.

Our proof follows the reduction method of Dvo\v r\'ak, Norin and Rahman~\cite{DNR26}. Sections~\ref{sec:preliminaries} and~\ref{sec:irreducible} adapt their reduction, linkage and separation arguments, keeping track of the number $b\in\{3,4\}$ of vertices that must remain after a reduction. The rooted-minor results of~\cite{Dvo26,DNR26,NT25}, collected in Theorem~\ref{thm:inputs}, are used with their original hypotheses.

Section~\ref{sec:rooted} proves the rooted-minor lemmas needed at the lower density. Lemma~\ref{lem:four} extends the internally 4-connected result to $4$-light 4-rooted graphs of positive density. Theorem~\ref{thm:five} gives a model in $\W^+$ at rooted density two, with at most one missing edge incident with a nonroot. This allows one root edge to remain missing on the other side of a 5-separation, provided the two missing edges are independent. Lemma~\ref{lem:joining} uses this choice to join the sides when $r_1+r_2\ge m+1$, where $m$ counts the missing separator edges.

Section~\ref{sec:proofs} first proves Theorem~\ref{thm:aux}. At density $4n-9$, this theorem forces every admissible reduction to preserve density exactly. The density increment is the sum of the negative density of the removed fragment and the number of added boundary edges. Both are nonnegative, so both must vanish. Thus no edge is added between surviving vertices, and no new $K_6$ subgraph is created. This permits the minimal-counterexample proof of Theorem~\ref{thm:bilight}.

\section{Preliminaries}\label{sec:preliminaries}
Fabila-Monroy and Wood~\cite{FMW13} characterized the graphs that contain a $K_4$ minor rooted at four prescribed vertices. Here we use density conditions to force rooted targets, principally with four or five roots.

A \emph{rooted graph} is a graph $R$ together with a specified root set $X_R$. A rooted model consists of pairwise disjoint nonempty connected branch sets, one for each target vertex, such that target edges are represented by edges between the corresponding branch sets. Each root branch set contains its prescribed root, and no other original root. Extra edges may be deleted. When the target has an independent root set, edges between original roots neither help nor obstruct such a model. We write
\[
\rho_4(R)=e(R)-e(R[X_R])-4|V(R)\setminus X_R|.
\]
For an unrooted graph we instead write $\rho_4(G)=e(G)-4|V(G)|$. The meaning will always be determined by whether roots have been specified.

A \emph{fragment} of a rooted graph is a nonempty set avoiding the roots. In an unrooted graph any nonempty vertex set is a fragment. A fragment $Y$ is a \emph{$k$-fragment} if $|\dd Y|=k$, and a \emph{$(\le k)$-fragment} if $|\dd Y|\le k$. Its associated rooted graph is
\[
R_Y=(G[Y\cup\dd Y],\dd Y),\qquad \rho_4(R_Y)=\rho_4(G,Y).
\]
A $k$-rooted graph is \emph{$4$-light} if every $(\le k-1)$-fragment has nonpositive $4$-density. It is \emph{internally $k$-connected} if there is no such fragment. A pair of disjoint nonadjacent fragments is a \emph{bifragment}. It is \emph{dense} if both fragment densities are positive.

A \emph{separation} $(A,B)$ of $G$ has $A\cup B=V(G)$ and no edge between $A\setminus B$ and $B\setminus A$. Its order is $|A\cap B|$, and its right-hand side is $R_{A,B}=(G[B],A\cap B)$. In a rooted graph, a \emph{root separation} additionally satisfies $X_G\subseteq A$. A root separation of order $|X_G|$ is \emph{proper} if $A\ne X_G$. Then $|B|<|V(G)|$. Right-hand sides of order at most the number of original roots inherit $4$-lightness whenever they are contained in a $4$-light rooted graph. Indeed, a smaller-boundary fragment in the right-hand side has exactly the same boundary and density in the original graph.

For a root separation $(A,B)$, an \emph{isolator} is a root separation $(C,D)$ of minimum order subject to $C\subseteq A$ and $B\subseteq D$. By Menger's theorem, its order equals the largest number of vertex-disjoint paths from $X_G$ to $B$. We choose these paths to meet $X_G$ only at their initial vertices and $B$ only at their final vertices. A root in $X_G\cap B$ is represented by a zero-length path. The final vertices, called \emph{terminators}, belong to $A\cap B$.

If the maximum number of paths is $|A\cap B|=|X_G|$, any rooted model in $R_{A,B}$ transfers to the original roots by adding the paths to its root branch sets. More generally, restricting the paths to an isolator transfers a model on the terminators to its separator. No path internal vertex then meets a branch set in the interior of $B$.

A \emph{dart} is a 4-rooted graph whose root graph is $P_3\sqcup K_1$ and whose unique nonroot vertex is adjacent to all four roots. In particular, its rooted $4$-density is zero. Let $D^*$ be the 4-rooted graph with independent roots and two adjacent nonroots, both adjacent to every root.

Let $\W$ consist of the 5-rooted graphs with independent roots and two nonroots $p,q$ satisfying the following condition. Either $p,q$ are nonadjacent and each is adjacent to every root, or $pq$ is an edge and at most two of the ten root--nonroot edges are missing, with any two missing edges independent. These are the $K_{2,\downarrow5}$ and vampire templates of~\cite{DNR26}. Let $\W^+\subseteq\W$ consist of the templates missing at most one of the eleven possible edges with a nonroot end.

Figure~\ref{fig:templates} shows the 4-root target and representative weak and strong 5-root targets.
\begin{figure}[htbp]
\centering
\resizebox{\textwidth}{!}{%
\begin{tikzpicture}[x=1cm,y=1cm,line width=0.5pt,font=\small]
\path[use as bounding box] (-0.3,-0.9) rectangle (13.0,2.4);
\begin{scope}
\coordinate (p) at (0.65,1.65); \coordinate (q) at (2.15,1.65);
\foreach \i in {1,2,3,4} {\coordinate (x\i) at ({0.1+0.85*(\i-1)},0);}
\foreach \i in {1,2,3,4} {\draw (p)--(x\i); \draw (q)--(x\i);}
\draw (p)--(q);
\foreach \i in {1,2,3,4} {\node[draw,circle,fill=white,inner sep=0pt,minimum size=4pt,label=below:$x_\i$] at (x\i) {};}
\node[circle,fill=black,inner sep=0pt,minimum size=4pt,label=above:$p$] at (p) {};
\node[circle,fill=black,inner sep=0pt,minimum size=4pt,label=above:$q$] at (q) {};
\node at (1.4,-0.65) {$D^*$};
\end{scope}
\begin{scope}[xshift=4.5cm]
\coordinate (p) at (0.6,1.65); \coordinate (q) at (2.4,1.65);
\foreach \i in {1,2,3,4,5} {\coordinate (x\i) at ({0.75*(\i-1)},0);}
\foreach \i in {2,3,4,5} {\draw (p)--(x\i);}
\foreach \i in {1,3,4,5} {\draw (q)--(x\i);}
\draw (p)--(q); \draw[densely dashed,gray] (p)--(x1) (q)--(x2);
\foreach \i in {1,2,3,4,5} {\node[draw,circle,fill=white,inner sep=0pt,minimum size=4pt,label=below:$x_\i$] at (x\i) {};}
\node[circle,fill=black,inner sep=0pt,minimum size=4pt,label=above:$p$] at (p) {};
\node[circle,fill=black,inner sep=0pt,minimum size=4pt,label=above:$q$] at (q) {};
\node at (1.5,-0.65) {$W\in\mathcal W\setminus\mathcal W^+$};
\end{scope}
\begin{scope}[xshift=9cm]
\coordinate (p) at (0.6,1.65); \coordinate (q) at (2.4,1.65);
\foreach \i in {1,2,3,4,5} {\coordinate (x\i) at ({0.75*(\i-1)},0);}
\foreach \i in {2,3,4,5} {\draw (p)--(x\i);}
\foreach \i in {1,2,3,4,5} {\draw (q)--(x\i);}
\draw (p)--(q); \draw[densely dashed,gray] (p)--(x1);
\foreach \i in {1,2,3,4,5} {\node[draw,circle,fill=white,inner sep=0pt,minimum size=4pt,label=below:$x_\i$] at (x\i) {};}
\node[circle,fill=black,inner sep=0pt,minimum size=4pt,label=above:$p$] at (p) {};
\node[circle,fill=black,inner sep=0pt,minimum size=4pt,label=above:$q$] at (q) {};
\node at (1.5,-0.65) {$W^+\in\mathcal W^+$};
\end{scope}
\end{tikzpicture}%
}
\caption{Rooted templates. Open vertices are roots and filled vertices are nonroots. Dashed segments are missing edges, not edges of the graph. The middle template has two independent root--nonroot defects, whereas the right template has only one. The figures show representatives, not all members of $\W$ and $\W^+$. Diagram code prepared with OpenAI Codex (GPT-6).}
\label{fig:templates}
\end{figure}

A 5-rooted graph is \emph{quite heavy} if its density is at least two, or if its density is one and it has no nonroot adjacent to all five roots.

For a graph on five vertices, let $\Sc_{5,t}$ be the class of graphs with at most $t$ edges, and let $\Sc^-_{5,4}$ exclude $K_3\sqcup K_2$ from $\Sc_{5,4}$. Define
\[
\begin{array}{c|ccccccccc}
r&r\le0&1&2&3&4&5&6&r\ge7\\
\T_r&\Sc_{5,0}&\Sc_{5,1}&\Sc_{5,3}&\Sc^-_{5,4}&\Sc_{5,6}&\Sc_{5,8}&\Sc_{5,9}&\Sc_{5,10}
\end{array}.
\]
A 5-rooted graph is $\T_r$-\emph{universal} if it has every member of $\T_r$ as a rooted minor for every prescribed bijection onto its roots.
\needroom{15\baselineskip}
\begin{theorem}\label{thm:inputs}
The following statements hold.
\begin{enumerate}
\item Every $4$-light 5-rooted graph $R$ is $\T_{\rho_4(R)}$-universal~\cite[Theorem~4]{Dvo26}. See also~\cite[Theorem~2.6]{DNR26}.
\item If a $4$-light $k$-rooted graph has positive density and $k\le3$, then it has a rooted $K_k$ minor. If $k=4$, it has a rooted dart minor~\cite[Theorem~2.7 and Lemma~3.1]{DNR26}.
\item Every quite heavy $4$-light 5-rooted graph has a rooted minor in $\W$~\cite[Theorem~2.9]{DNR26}.
\item If $(R,X)$ is internally 4-connected, $|X|=4$, and $e(R)\ge4|V(R)|-9$, then it has an $X$-rooted $D^*$ minor~\cite[Lemma~12]{NT25}.
\end{enumerate}
\end{theorem}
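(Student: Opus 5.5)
Theorem~\ref{thm:inputs} collects results already in the literature, so I will not reprove them from scratch. For each item I will check that the hypotheses and conclusions, in the notation of this paper, agree exactly with the cited source. The main risk is a mismatch of conventions rather than any new mathematics. So the first step is to fix a dictionary between our definitions and those of~\cite{Dvo26,DNR26,NT25}:
\begin{itemize}
\item rooted $4$-density $\rho_4(R)=e(R)-e(R[X_R])-4|V(R)\setminus X_R|$, with root--root edges ignored;
\item $4$-lightness of a $k$-rooted graph, meaning that every $(\le k-1)$-fragment has nonpositive density;
\item rooted models in which each root branch set contains exactly one prescribed root;
\item the classes $\Sc_{5,t}$, $\Sc^-_{5,4}$ and $\T_r$.
\end{itemize}
I will check that each cited paper uses the same density normalization, possibly up to the additive constant coming from root edges. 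I will also check that their notion of lightness quantifies over fragments with boundary strictly smaller than the number of roots. Since deleting root--root edges changes neither $\rho_4$ nor the existence of a model with independent root set, any discrepancy in how root edges are counted is harmless.

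Item~1 is then a direct transcription of~\cite[Theorem~4]{Dvo26}: the table defining $\T_r$ is copied from there, and universality over every bijection onto the roots is the statement proved there. Item~3 is~\cite[Theorem~2.9]{DNR26}. Here I must confirm that ``quite heavy'' matches their definition, including the density-one case with no nonroot complete to the roots. I must also confirm that $\W$ is precisely the union of their $K_{2,\downarrow5}$ and vampire templates, with the independence requirement on two missing root--nonroot edges.

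For Item~2, the case $k\le3$ is~\cite[Theorem~2.7]{DNR26}, and the dart statement for $k=4$ is~\cite[Lemma~3.1]{DNR26}. The point to verify is that a dart with root graph $P_3\sqcup K_1$ is what they produce, and not a weaker or stronger 4-root target.

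Item~4 is~\cite[Lemma~12]{NT25}. This is the item I expect to need the most care. That lemma is stated for internally 4-connected rooted graphs with an edge bound on the whole graph, so I will check two things. First, that $e(R)\ge4|V(R)|-9$ counts all edges, including root--root edges. Second, that internal 4-connectivity there means the absence of $(\le3)$-fragments, as here. If instead the cited lemma counts only non-root-graph edges, I would translate the bound by noting that at most six root edges exist among four roots. Adding them can only help, since $D^*$ has independent roots and root edges may be deleted from a model.
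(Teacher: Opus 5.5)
Your proposal is correct and takes the same approach as the paper: Theorem~\ref{thm:inputs} is stated without proof as a compilation of results from the cited sources, so the only work is the convention check you describe for $\rho_4$, $4$-lightness, quite heaviness, $\W$ and $\T_r$. For item~4, the paper's own application in the proof of Lemma~\ref{lem:four} first completes the four roots to a clique to reach $e(R^*)\ge4|V(R)|-9$, which confirms that the bound counts all edges; your fallback is therefore unnecessary, and in that contingency adding root edges would not in fact raise a count of nonroot edges.
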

We also use the elementary 3-root classification~\cite[Lemma~11]{Dvo26}. A 3-rooted graph with no rooted triangle has a skeleton with at most one nonroot, and every component outside the skeleton has at most two neighbours in it. Here a skeleton is a supergraph of the induced graph on its vertex set, containing all roots. Only the stated bound on its size and on attachment sets is used below.

A $(\le4)$-fragment $Y$ is \emph{reducible} if $\rho_4(G,Y)\le0$ and either $R_Y$ has a rooted clique on its boundary, or $|\dd Y|=4$, $|Y|\ge2$, and $R_Y$ has a dart minor. In the unrooted case we additionally require $|V(G)\setminus Y|\ge3$. Its \emph{reducent} is
\[
(G-Y)\cup D,
\]
where $D$ is the rooted boundary clique or the dart supplied by the model. In the latter case its nonroot vertex is new. The reducent is a minor and has fewer vertices. It preserves original roots in the rooted case. If $a$ boundary edges are added, then in an unrooted graph
\begin{equation}\label{eq:increment}
\rho_4((G-Y)\cup D)-\rho_4(G)=-\rho_4(G,Y)+a\ge0.
\end{equation}
Rooted density is also nondecreasing. Edges added between original roots are simply omitted from that count.
\needroom{6\baselineskip}
\begin{lemma}\label{lem:stability}
\begin{enumerate}
\item In a $4$-light 5-rooted graph, reducing an inclusionwise maximal reducible fragment preserves $4$-lightness.
\item Let $G$ be an unrooted $4$-bilight graph, and let $b\in\{3,4\}$. Choose $Y$ inclusionwise maximal among reducible fragments satisfying $|V(G)\setminus Y|\ge b$. Then its reducent is $4$-bilight.
\end{enumerate}
\end{lemma}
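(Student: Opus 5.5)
The plan is to argue by contradiction, using maximality of $Y$ together with the fact that fragments in the reducent $G'=(G-Y)\cup D$ lift to fragments of $G$. Write $S=\dd Y$. The key observation is that for any vertex set $Z$ of $G'$ avoiding the new dart vertex, $G'[Z\cup\dd_{G'}Z]$ differs from the corresponding subgraph of $G$ only by edges added inside $S$.

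Suppose $Z$ is a fragment of $G'$ with small boundary and positive density. There are two cases.

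\begin{itemize}
\item If $Z\cap S=\emptyset$, then no added edge touches $Z$, and $\dd_{G'}Z\subseteq\dd_G Z$. Since $Z$ cannot meet $Y$, the two boundaries coincide and the densities agree.
\item If $Z$ meets $S$, consider $Z^+=Z\cup Y$ in $G$, or $Z\cup Y$ minus the dart vertex. Its boundary in $G$ is $\dd_{G'}Z$ with the dart vertex deleted, so it is at most as large. Its density satisfies
\[
\rho_4(G,Z\cup Y)=\rho_4(G',Z)+\rho_4(G,Y)-(\text{edges of }D\text{ counted in }Z)+\cdots .
\]
Here the edges of $D$ counted in $Z$ are the added boundary edges, plus the dart edges when the dart vertex lies in $Z$.
\end{itemize}

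Because $D$ is a minor of $R_Y$, the edges it contributes are realised inside $R_Y$ by contraction. So I would show that $Z\cup Y$ is again reducible whenever $Z$ is a reducible-type fragment. The rooted clique or dart for $Z\cup Y$ is obtained by expanding the model of $Z$ in $G'$, replacing $D$ by its model in $R_Y$. Maximality of $Y$ then forbids this. For lightness and bilightness, however, we need positive-density fragments, not reducible ones, so the density accounting matters.

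For (1), let $Z$ be a $(\le4)$-fragment of $G'$ with $\rho_4>0$. In the first case $Z$ is a $(\le4)$-fragment of $G$ with the same density, contradicting $4$-lightness of $G$. In the second case, $Z\cup Y$ (with the dart vertex removed) is a $(\le4)$-fragment of $G$, and its density is
\[
\rho_4(G',Z)+\rho_4(G,Y)-a_Z,
\]
up to the correct treatment of the dart vertex.

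This does not immediately give positivity. So I would instead use $4$-lightness of $G$ to get $\rho_4(G,Z\cup Y)\le0$. Then $Z\cup Y$ is a fragment of nonpositive density whose side contains a rooted clique or dart on its boundary. That model is obtained from a model in $G'$ given by the positive-density case of Theorem~\ref{thm:inputs}(2): a rooted $K_k$ for $k\le3$, or a dart for $k=4$. Such a model applies because $G'[Z\cup\dd Z]$, restricted to the smaller boundary, is $4$-light by the inheritance remark in Section~\ref{sec:preliminaries}. Lifting the model through the contraction $R_Y\to D$ makes $Z\cup Y$ reducible and strictly larger than $Y$, contradicting maximality.

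If the boundary has size at most $3$ and $Z$ contains the dart vertex, I would handle that separately. The dart vertex has degree $4$, and deleting it changes density by exactly $0$, since a dart has density zero.

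For (2) the same lifting applies to each of the two fragments $Z_1,Z_2$ in a hypothetical dense bifragment of $G'$. Since $Z_1,Z_2$ are nonadjacent and $S$ is contained in a clique or dart neighbourhood in $G'$, at most one of them meets $S\cup\{\text{dart vertex}\}$. The other is a fragment of $G$ with identical boundary and density. Replace the first by $Z_1\cup Y$. If it has positive density, we obtain a dense bifragment of $G$, contradicting bilightness, after checking that disjointness and nonadjacency are preserved, which follows because $Z_2$ avoids $N[S]$-relevant vertices. Otherwise, as in (1), $Z_1\cup Y$ is reducible. Its complement contains $Z_2\cup\dd Z_2$, and I would show this has at least $b$ vertices since $|\dd Z_2|\ge$ what is needed, or because $|V(G')\setminus Z_1|\ge|V(G)\setminus Y|$-type counting holds. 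This again contradicts maximality.

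The main obstacle will be the precise density bookkeeping in the case where $Z$ contains the new dart vertex or only part of $S$. I must verify that the lifted set $Z\cup Y$ has boundary no larger than $\dd_{G'}Z$, and that the lifted rooted model of Theorem~\ref{thm:inputs}(2) still has branch sets containing the correct boundary vertices. I would first try to handle partial intersection $Z\cap S\neq\emptyset,S$ by noting that $S$ is complete, or dart-connected, in $G'$. Then $S\setminus Z\subseteq\dd_{G'}Z$, and so it lies in the lifted boundary as well.
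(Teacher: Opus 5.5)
Your overall strategy matches the paper's. You lift a positive small fragment $Z$ of the reducent $G'$ that meets $V(D)$ to a strictly larger fragment $Y'\supsetneq Y$ of $G$, and lift the boundary clique or dart given by Theorem~\ref{thm:inputs}(2). You then use $4$-lightness (or $4$-bilightness against the untouched partner) to force $\rho_4(G,Y')\le0$, and contradict maximality. The gap is the dart reduction, where two of your key claims are false. Write the dart's root graph as a path $abc$ plus an isolated root $d$, and $y$ for the new vertex. The dart's roots are not pairwise adjacent, so ``dart-connected'' does not give $S\setminus Z\subseteq\partial_{G'}Z$. If $Z\cap(S\cup\{y\})=\{d\}$, then $y\in\partial_{G'}Z$ but $a,b,c$ need not be, although each has a neighbour in $Y$. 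Here $\partial_G(Z\cup Y)=(\partial_{G'}Z\setminus\{y\})\cup(S\setminus Z)$ can have $|\partial_{G'}Z|+2$ vertices, so neither $4$-lightness of $G$ nor reducibility applies to $Z\cup Y$. Likewise, in (2) two disjoint nonadjacent fragments can both meet $V(D)$. For instance, one may contain $a$ and the other $c$ when $ac\notin E(G)$, with $b,y$ in both boundaries. So your claim that at most one of them meets $S\cup\{y\}$ is not automatic.

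The paper obtains exactly these properties from \cite[Lemma~3.2]{DNR26}: a dense bifragment $(Z_1,Z_2)$ with $V(D)\cap Z_2=\varnothing$, $|V(D)\setminus(Z_1\cup\partial Z_1)|\le1$, and $R^{G'}_{Z_1}$ $4$-light. You could obtain them yourself by choosing $Z$ in (1), or $(Z_1,Z_2)$ in (2), minimal. That choice also replaces your circular appeal to inheritance for the lightness of $R^{G'}_Z$, since $G'$ is the very graph whose lightness is in question. Suppose a member $Z$ avoids $y$ and meets $S$ only in $q$. Then $Z\setminus\{q\}$ is nonempty (a singleton is not positive), and it has density $\rho_4(G',Z)-e_{G'}(q,\partial_{G'}Z)+4\ge\rho_4(G',Z)$. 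Its boundary is no larger, because $y$ leaves it. Hence every member meeting $V(D)$ contains $y$ or at least two roots, and both properties follow. You also omit the normalization needed when $y\in\partial_{G'}Z_1$. In that case the model has a root branch set rooted at $y$, which is not a boundary vertex of the lift. If one vertex $z\in V(D)$ lies outside $Z_1\cup\partial Z_1$, move $y$ into the fragment and reroot that branch set at $z$ via $yz$. Otherwise move $y$ in and cancel that root. A clique simply restricts, and a dart with one cancelled root still yields a rooted triangle on the other three. Only then does $Y'=Y\cup(Z_1\setminus\{y\})$ have boundary equal to the normalized boundary and carry a lifted model. Finally, your complement count should be the estimate $|Z_2|+|\partial Z_2|\ge6\ge b$, which holds for every positive $(\le4)$-fragment.
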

\begin{proof}
Assertion (1) is~\cite[Lemma~3.3]{DNR26}. We give the unrooted argument with the complement-size condition explicit. It also proves the case $b=3$ of (2).

Let $H=(G-Y)\cup D$ be the reducent, put $Q=\dd_GY$, and let $y$ denote the new nonroot vertex when $D$ is a dart. Suppose that $H$ is not $4$-bilight. By~\cite[Lemma~3.2]{DNR26}, choose a dense $(\le4)$-bifragment $(S,T)$ of $H$ such that either $V(D)$ avoids both interiors, or
\[
V(D)\cap S\ne\varnothing,\quad V(D)\cap T=\varnothing,\quad
|V(D)\setminus(S\cup\dd_HS)|\le1,
\]
and the rooted graph $R^H_S$ is $4$-light. In the first case neither the added boundary edges nor the new dart vertex contributes to the densities of $S,T$. Also neither interior has a neighbour in $Y$, since both avoid $Q$. Thus the same dense small bifragment exists in $G$, a contradiction.

In the second case, Theorem~\ref{thm:inputs}(2) gives a boundary clique model in $R^H_S$ when $|\dd_HS|\le3$, and a dart model when $|\dd_HS|=4$. We use the normalization from the proof of~\cite[Lemma~3.3]{DNR26}, as follows. If $D$ is a dart and its unique vertex outside $S\cup\dd_HS$ is $z$, then $y\in\dd_HS$. In this case, add $y$ to $S$ and replace the boundary root $y$ by $z$, using the edge $yz$ in its root branch set. If $V(D)\subseteq S\cup\dd_HS$ and $y\in\dd_HS$, add $y$ to $S$ and cancel that boundary root. A clique restricts to the remaining roots. A dart, with one root cancelled, has a triangle model on its other three roots. In all other cases leave $S$ unchanged.

Call the resulting fragment $S_1$. This construction keeps $S_1$ nonadjacent to $T$, gives $|\dd_HS_1|\le4$, and ensures
\[
S\subseteq S_1,\qquad V(D)\subseteq S_1\cup\dd_HS_1,\qquad
V(D)\setminus Q\subseteq S_1.
\]
Its associated rooted graph has a boundary clique or dart model. For completeness, the assertion about cancelling a dart root follows directly from its root path $abc$ and isolated root $d$. If the cancelled root is $a$ or $c$, absorb the nonroot into $d$. If it is $d$, absorb the nonroot into $a$. If it is $b$, absorb $b$ into $c$ and the nonroot into $d$. In each case the three remaining roots form a triangle model.

Undo the reduction and set
\[
Y'=Y\cup\bigl(S_1\setminus(V(D)\setminus Q)\bigr).
\]
Then $\dd_GY'=\dd_HS_1$. Composing with the model defining the reduction lifts the boundary clique or dart to $R^G_{Y'}$. Since $S$ is positive and has boundary at most four, $|S|\ge2$. At most the single new dart vertex is removed from $S_1$, so $|Y'|\ge|Y|+1\ge2$.

The fragment $T$ and its density are unchanged in $G$, and $Y'$ is nonadjacent to $T$. Hence $4$-bilightness of $G$ forces $\rho_4(G,Y')\le0$. Furthermore,
\[
T\cup\dd_GT\subseteq V(G)\setminus Y'.
\]
A positive fragment $T$ with $k=|\dd_GT|\le4$ has $|T|+k\ge6$. Indeed, if $t=|T|$ and $t+k\le5$, then
\[
\rho_4(G,T)\le\binom{t}{2}+tk-4t\le0.
\]
Consequently $|V(G)\setminus Y'|\ge6\ge b$. Thus $Y'$ is an admissible reducible fragment strictly containing $Y$, contrary to its maximality. This proves (2).
\end{proof}
\needroom{10\baselineskip}
We will need the following consequence of the linkage argument in~\cite{DNR26}. It applies even when the whole graph already has a weak model.
\begin{lemma}\label{lem:nonfull}
Let $R$ be a $4$-light rooted graph with $k\le5$ roots, with $\rho_4(R)\le0$ if $k\le4$. Suppose that a root separation $(A,B)$ of order five has a right-hand side containing a member of $\W$. If there is no linkage of size five from the original roots to $B$, then $R$ has a reducible fragment containing $B\setminus A$.
\end{lemma}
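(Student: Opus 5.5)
We apply Menger's theorem to the root separation $(A,B)$. Since there is no linkage of size five from $X_R$ to $B$, we take an isolator $(C,D)$ with $C\subseteq A$, $B\subseteq D$, and order $j\le4$. The candidate fragment is $Y=D\setminus C$. It avoids the roots, contains $B\setminus A$, and satisfies $\dd Y\subseteq C\cap D$, so $|\dd Y|\le j\le4$. Among all isolators we take one with $D$ inclusionwise maximal, so that $C\cap D$ is as far from $B$ as possible. It then remains to check the two requirements of reducibility: $\rho_4(R,Y)\le0$, and a boundary clique or dart model in $R_Y$.

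For the density bound, first suppose $j\le k-1$. Then $Y$ is a $(\le k-1)$-fragment, and $4$-lightness gives $\rho_4(R,Y)\le0$ directly. The remaining case is $k\le4$ and $j=k$. Here $(C,D)$ is a root separation of order $|X_R|$, so the complement contributes nonnegatively and $\rho_4(R,Y)\le\rho_4(R)\le0$. To make this precise, write $\rho_4(R)$ as $\rho_4(R,Y)$ plus the density of the left part $(R[C],X_R)$. By Menger, that left part contains $k$ disjoint root paths to $C\cap D$. Each nonroot on these paths carries enough incident edges, or can be handled by the lightness of the left side, to give nonnegative contribution. If this accounting is awkward, I will fall back on the hypothesis $\rho_4(R)\le0$ together with $4$-lightness applied to the left fragments.

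For the model, the right-hand side $R_{A,B}$ contains some $W\in\W$, with five roots on $A\cap B$ and two adjacent or doubly attached nonroots $p,q$. The $j\le4$ disjoint paths from $X_R$ reach terminators in $A\cap B$. Along $D$, by the maximality of the isolator, there are $j$ disjoint paths from $C\cap D$ to $A\cap B$ inside $R[D]$, and these end at $j$ of the five roots of $W$. We contract each path into the branch set of its root of $W$. The unused roots of $W$, at least $5-j\ge1$ of them, are absorbed into $p$ or $q$. The result is a rooted minor of $W$ on $j$ roots. We then check the cases. For $j\le3$, contracting $p$ into one unused root and using $q$ and the other nonroot adjacencies gives a clique on the $j$ terminators: every root meets $p$ or $q$, and $pq$ is an edge or both nonroots see all roots. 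For $j=4$, one root of $W$ is unused. Merging it with $q$ yields roots all adjacent to $p$, and at most two independent missing edges leave enough to build a dart, namely $P_3\sqcup K_1$ on the roots plus the nonroot $p$. Since $p$ and $q$ are distinct interior vertices, $|Y|\ge2$.

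The hardest step will be the $j=4$ case with $j=k$. Two difficulties arise together there. First, we must extract the dart when $W$ has two missing independent root--nonroot edges. The plan is to choose which root to sacrifice so that it is an endpoint of a missing edge; after that, only one defect remains, and it can be absorbed into the path $P_3$. Second, we must verify $\rho_4(R,Y)\le0$ rigorously, which may require choosing the isolator closest to $X_R$ instead, trading off against the linkage inside $D$. In the unrooted or $k$-root setting we must also confirm $|V\setminus Y|\ge3$, which holds since $C\supseteq X_R$ together with $C\cap D$ gives at least three vertices outside $Y$.
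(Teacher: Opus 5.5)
Your outline matches the paper's: take an isolator, get the density bound from lightness or from $\rho_4(R)\le0$, restrict the template to the terminators, transfer the model, and note $|Y|\ge2$. However, two steps do not work as written.

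\textbf{The density bound when $j=k\le4$.} You claim the left part $(R[C],X_R)$ contributes nonnegatively. This is false for a general isolator of order $k$. The linkage paths may run through long stretches of degree-two nonroots, so the left part can have arbitrarily negative density. A $k$-fragment near $B$ can then have positive density even though $R$ is $4$-light and $\rho_4(R)\le0$, because lightness only controls $(\le k-1)$-fragments. The missing observation is that when $j=k$, the trivial separation $(X_R,V(R))$ is itself an isolator. Your choice of $D$ inclusionwise maximal is therefore forced to be exactly this separation. Then $Y=V(R)\setminus X_R$ and $\rho_4(R,Y)=\rho_4(R)\le0$ identically, with no accounting needed. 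The paper makes this choice explicitly.

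\textbf{The model when $j=4$ and $j=3$.} For $j=4$ you plan to choose which template root to sacrifice, but you have no such freedom. The four terminators are whichever vertices of $A\cap B$ the linkage reaches. You therefore need a rooted dart for every four-element subset of the template's roots; the paper cites Observation~4.1 of~\cite{DNR26} for this. It does hold. Keep $p$ as the dart nonroot. If $p$ misses a kept root $x$, absorb $q$ into $x$; the edge $qx$ exists because the two defects are independent. Otherwise absorb $q$ into any kept root that $q$ sees. The enlarged root set then sees at least two other kept roots, $p$ sees all four root sets, and two edges of this star give $P_3\sqcup K_1$. Similarly, for $j=3$, contracting $p$ into an unused root does not by itself yield a triangle. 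Instead, absorb $p$ and $q$ into two distinct kept roots that they respectively see, and choose the third kept root among those adjacent to both $p$ and $q$.

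\textbf{Smaller points.} A dart reduction needs $|\dd Y|=4$ exactly, which you should verify. It holds because each transferred root set consists of its separator vertex together with vertices of $Y$, and is adjacent to the nonroot set inside $Y$. The complement-size condition you mention belongs only to the unrooted definition of reducibility and is not needed here.
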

\begin{proof}
This is the non-full-linkage branch of~\cite[Corollary~4.2]{DNR26}. We recall the argument to specify the conclusion used here. Take an isolator $(C,D)$ of order $q\le4$. If $q=k$, choose $(C,D)=(X_R,V(R))$. Otherwise $q<k$. The rooted graph $R_{C,D}$ has nonpositive density, by the hypothesis when $q=k$ and by $4$-lightness otherwise. Restrict the weak template to the $q$ terminators of a linkage within $D$. It supplies a rooted $K_q$, or a dart when $q=4$~\cite[Observation~4.1]{DNR26}. Transfer that model to $C\cap D$. The fragment $D\setminus C$ has at least two vertices, because it contains the two nonroot branch sets of the template in $B\setminus A$. It is therefore reducible. If its actual boundary is smaller than the separator, a clique model restricts to that boundary. A dart model forces every one of its four roots to have a neighbour in the interior.
\end{proof}
\needroom{6\baselineskip}
\begin{lemma}\label{lem:cancel}
Let $R$ be a $4$-light graph with at most four roots and $\rho_4(R)\le0$. Changing some of its roots into nonroots preserves both properties.
\end{lemma}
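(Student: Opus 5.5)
Let $R$ have root set $X$ with $|X|=k\le4$, and let $Z\subseteq X$ be the roots turned into nonroots, so the new rooted graph is $R'$ with root set $X'=X\setminus Z$. I will check the two properties separately, using only the definitions of rooted density and of fragments.

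\emph{Density.} Put $z=|Z|$. Compared with $\rho_4(R)$, the quantity $\rho_4(R')$ adds back the edges of $R[X]$ meeting $Z$ and subtracts $4z$. These edges number at most $e(R[X])-e(R[X'])$, and each vertex of $Z$ is incident with at most $k-1\le3$ of them. Hence at most $3z$ edges are gained, while $4z$ is lost, so
\[
\rho_4(R')\le\rho_4(R)+3z-4z=\rho_4(R)-z\le0 .
\]
This step is routine. I will only double-check that edges inside $Z$ are not counted twice, which makes the gain even smaller.

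\emph{Lightness.} The new graph $R'$ is $|X'|$-rooted, so I must show that every fragment $Y'$ of $R'$ with $|\dd Y'|\le|X'|-1$ has nonpositive density. Such a $Y'$ avoids $X'$ but may contain vertices of $Z$. If $Y'\cap Z=\varnothing$, then $Y'$ is a fragment of $R$ with the same boundary, of size at most $k-1$, and the same density, so $4$-lightness of $R$ applies. The main case is $Y'\cap Z\neq\varnothing$. Put $Y=Y'\setminus Z$ and $W=Y'\cap Z$. I would try the following:
\begin{itemize}
\item If $Y$ is empty, then $Y'\subseteq Z$. The density of $Y'$ is then at most $\binom{|W|}{2}+|W|\,|\dd Y'|-4|W|$. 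The bound $|\dd Y'|\le|X'|-1\le3-|W|$ makes this nonpositive, by the same computation as for the ``$|T|+k\ge6$'' fact in Lemma~\ref{lem:stability}.
\item If $Y$ is nonempty, I compare $\rho_4(R',Y')$ with $\rho_4(R,Y)$. Moving $W$ from the interior to the boundary removes the term $-4|W|$. It also removes the edges inside $W$ and the edges from $W$ to $\dd Y'$. Thus $\rho_4(R',Y')\le\rho_4(R,Y)-4|W|+\bigl(\text{edges inside }W\bigr)+e(W,\dd Y')$. This is at most $\rho_4(R,Y)+|W|(|W|-1)/2+|W|(|X'|-1)-4|W|\le\rho_4(R,Y)$ since $|W|+|X'|\le4$.
\end{itemize}

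The remaining task, which I expect to be the main obstacle, is to ensure that $\rho_4(R,Y)\le0$. Here $\dd_RY\subseteq \dd Y'\cup W$, so $|\dd_RY|\le |X'|-1+|W|\le k-1$, and $4$-lightness of $R$ applies. If instead $\dd_R Y$ is all of $X$-relevant with size equal to $k$, I fall back to $\rho_4(R)\le0$: in that situation $Y$ together with the roots accounts for the whole graph up to nonpositive pieces, and the hypothesis $\rho_4(R)\le0$ covers it. I would verify the boundary count carefully, since a boundary vertex of $Y$ in $R$ may lie in $W$ rather than in $\dd Y'$. The bound $|\dd_RY|\le|\dd Y'|+|W|$ is exactly what is needed to stay below $k$.
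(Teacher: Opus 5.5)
Your argument is correct and is essentially the paper's proof: you pass from a small fragment $Y'$ of the new graph to $Y=Y'\setminus Z$, bound $|\dd_RY|\le|\dd Y'|+|W|<|X|$ to invoke $4$-lightness of $R$, and absorb the extra edges using $\binom{|W|}{2}+|W|\,|\dd Y'|\le4|W|$. The paper treats the density claim uniformly as the case of the whole nonroot set $V(R)\setminus X'$, whereas you count the root edges meeting $Z$ directly, which is the same estimate. Your closing ``fall back to $\rho_4(R)\le0$'' case never arises, since you have already shown $|\dd_RY|\le k-1$ unconditionally, so that sentence can be deleted.
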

\begin{proof}
This is~\cite[Observation~5.2]{DNR26}. Let $X'$ be the remaining roots, put $M=X_R\setminus X'$, and let $H$ denote the graph with root set $X'$. Consider a fragment $Y$ of $H$ with either $Y=V(H)\setminus X'$ or $|\dd_HY|<|X'|$. Set $Y'=Y\setminus M$. In the first case, $Y'=V(R)\setminus X_R$. In the second,
\[
|\dd_RY'|\le |\dd_HY|+|M|<|X_R|.
\]
Hence $\rho_4(R,Y')\le0$, with the density of the empty set taken to be zero. Put $a=|M\cap Y|$ and $t=|\dd_HY|$. In either case $|M|+t\le|X_R|\le4$. The edges counted for $Y$ but not for $Y'$ number at most $\binom{a}{2}+at\le4a$. Therefore
\[
\rho_4(H,Y)\le\rho_4(R,Y')+\binom{a}{2}+at-4a\le0.
\]
This proves both assertions.
\end{proof}
We also use the density bound of~\cite[Theorem~2.8]{DNR26}:
\begin{equation}\label{eq:known}
|V(G)|\ge3,\quad G\text{ is $4$-bilight},\quad e(G)\ge4|V(G)|-7\quad\Longrightarrow\quad \Ke\preccurlyeq G.
\end{equation}

\needroom{12\baselineskip}
\section{Rooted minors}\label{sec:rooted}
\subsection{Four roots}\label{subsec:four}
\begin{lemma}\label{lem:four}
Every $4$-light 4-rooted graph of positive $4$-density has a rooted $D^*$ minor.
\end{lemma}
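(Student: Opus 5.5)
Let $R$ be a $4$-light 4-rooted graph with root set $X=\{x_1,\dots,x_4\}$ and $\rho_4(R)\ge1$. I will take $R$ to be a counterexample with $|V(R)|$ minimum, and then $e(R)$ minimum. Edges between roots do not affect either the rooted density or a rooted $D^*$ model, because $D^*$ has independent roots. I may therefore delete them. The plan has three steps: reduce to the case where $R$ is internally 4-connected, check the edge count, and apply Theorem~\ref{thm:inputs}(4).

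\emph{Step 1: reduction to the internally 4-connected case.} Suppose $R$ has a $(\le3)$-fragment. By $4$-lightness every such fragment has nonpositive density. Choose such a fragment $Y$ inclusionwise maximal, with $|\dd Y|=k\le3$.
\begin{itemize}
\item If $\rho_4(R_Y)\le0$ but $R_Y$ has a rooted $K_k$ minor, then $Y$ is reducible. Replace $R_Y$ by the boundary clique. By \eqref{eq:increment} the density does not decrease. The argument of Lemma~\ref{lem:stability}(1), adapted to four roots (the same proof as \cite[Lemma~3.3]{DNR26}), shows that $4$-lightness is preserved. The reducent is a smaller positive $4$-light 4-rooted graph, so by minimality it has a rooted $D^*$ minor. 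This lifts to $R$, because the clique is a minor of $R_Y$.
\item If $R_Y$ has no rooted $K_k$ minor, I use Theorem~\ref{thm:inputs}(2) in contrapositive form, together with the 3-root classification, to delete $Y$ outright. The fragment has density at most $0$, and the missing clique edges only help. This again gives a smaller positive $4$-light instance.
\end{itemize}
The delicate point is that the deletions must not drop $\rho_4$ to $0$. This uses $\rho_4(R)=\rho_4(R-Y)+\rho_4(R,Y)$, where the root edges between boundary vertices are counted on the right. If a boundary clique edge is not realised, the $4$-lightness of $R_Y$ as a $k$-rooted graph with $k\le3$ shows that $\rho_4(R,Y)\le -1$ in that situation. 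Hence adding the clique edges keeps the density of the rest at least its old value.

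\emph{Step 2: the edge count.} Once $R$ is internally 4-connected, I verify the hypothesis of Theorem~\ref{thm:inputs}(4). Let $n=|V(R)|$. With no root edges,
\[
e(R)=\rho_4(R)+4(n-4)\ge 4n-15.
\]
This falls short of the required $4n-9$ by six, and six is exactly $\binom42$. So I add all six root edges. This is harmless, because $D^*$ has independent roots and root edges neither help nor obstruct a model. It also preserves internal 4-connectivity, since adding edges between roots creates no new small fragment. The resulting $R^+$ satisfies $e(R^+)\ge4n-9$. Theorem~\ref{thm:inputs}(4) then gives an $X$-rooted $D^*$ minor in $R^+$. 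Deleting the root edges gives one in $R$.

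\emph{Main obstacle.} The main obstacle is Step 1, namely keeping the reduced graph both $4$-light and of positive density. The second case, where $R_Y$ lacks the rooted clique, seems the hardest. If the direct argument fails, I would fall back on Theorem~\ref{thm:inputs}(2): the graph $R$ itself has a rooted dart minor. I would then try to upgrade the dart to $D^*$ by locating a second nonroot branch set in a fragment of positive density.
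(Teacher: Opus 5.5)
Your outer structure matches the paper's. You take a minimal counterexample with root edges deleted, eliminate $(\le3)$-fragments, and then complete the roots to a clique so that $e(R^*)\ge 4n-15+6=4n-9$ and Theorem~\ref{thm:inputs}(4) applies. Step~2 is exactly the paper's last paragraph, and your first bullet corresponds to the paper's exclusion of clique-reducible fragments.

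\textbf{The gap is the second bullet.} Deleting a $(\le3)$-fragment $Y$ never lowers density, since $\rho_4(R-Y)=\rho_4(R)-\rho_4(R,Y)\ge\rho_4(R)$. So the ``delicate point'' you single out is not the issue. Your claim that $\rho_4(R,Y)\le-1$ whenever the boundary clique is absent is also false. Take two copies of $K_5$, one joined completely to $\{a,b\}$ and one to $\{b,c\}$. This is a $4$-light graph rooted at $\{a,b,c\}$ of density $0$, and it has no rooted triangle because $b$ separates $a$ from $c$. The real issue is $4$-lightness of $R-Y$. A positive $(\le3)$-fragment $Z$ of $R-Y$ that meets $\dd Y$ acquires vertices of $Y$ as boundary vertices in $R$. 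Enlarging it to $Z\cup Y$ may push the boundary to four or five vertices, so maximality of $Y$ yields no contradiction. Nothing in your proposal handles this, and the fallback through a dart is only a hope.

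\textbf{What the paper does instead.} It never deletes a cliqueless fragment; it shows that none exists, using two ingredients you do not have.
\begin{itemize}
\item First, there is no proper root $4$-separation with positive right-hand side. That side is a smaller $4$-light instance, so minimality gives a $D^*$ model there. A $4$-linkage transfers it. Otherwise an isolator of order $q\le3$ carries the rooted $K_q$ contained in $D^*$ to a clique-reducible fragment.
\item Second, every nonroot has degree at least five. Deleting a nonroot $v$ of degree at most four does not lower density. If the result stays $4$-light, minimality applies. If not, putting $v$ back on the boundary of the offending positive fragment produces exactly such a forbidden separation.
\end{itemize}
With these in hand, no $(\le3)$-fragment survives:
\begin{itemize}
\item A $(\le1)$-fragment is trivially clique-reducible.
\item In a $2$-fragment, every interior component meets both boundary vertices (else a $(\le1)$-fragment exists), which gives the rooted $K_2$.
\item A $3$-fragment without a rooted triangle has, by the skeleton classification, no components outside the skeleton, since these would be $(\le2)$-fragments. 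So it is a single vertex of degree at most three, contradicting the degree bound.
\end{itemize}
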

\begin{proof}
Choose a counterexample $R$ with the fewest vertices and delete edges between its four roots. In this proof, call a nonempty fragment \emph{clique-reducible} if its boundary has size at most three and its associated rooted graph has a boundary clique minor. Its density is automatically nonpositive.

There is no clique-reducible fragment. Otherwise choose an inclusionwise maximal one, $Y$, and replace it by the clique on $Q=\dd_RY$, obtaining a smaller rooted minor $R'$ of at least the same density. If $R'$ is not $4$-light, choose an inclusionwise minimal positive $(\le3)$-fragment $Z$. Its associated rooted graph is $4$-light and has a boundary clique minor by Theorem~\ref{thm:inputs}(2). If $Z\cap Q=\varnothing$, no new clique edge contributes to its density, and $Z$ is the same forbidden positive small fragment in $R$. Otherwise the clique property gives $Q\subseteq Z\cup\dd_{R'}Z$. Undoing the reduction gives $Y\cup Z$ with boundary contained in $\dd_{R'}Z$. Compose the two clique models, and restrict to the actual boundary if it is smaller. Its density is nonpositive by $4$-lightness, so it is a clique-reducible fragment strictly containing $Y$. This contradiction shows that $R'$ is $4$-light. Minimality now gives a $D^*$ model in $R'$, and hence in $R$, again a contradiction.

There is also no proper root 4-separation with positive right-hand side. That side is a smaller $4$-light 4-rooted graph, so it has a $D^*$ model. A linkage of size four transfers the model. Otherwise an isolator has order $q\le3$. The restriction of $D^*$ to any $q$ of its roots has a rooted $K_q$ minor. For $q=3$, absorb its two nonroots into two distinct roots. For $q=2$, absorb one nonroot into a root. The case $q\le1$ is immediate. Transfer this clique to the isolator separator. Its interior is nonempty, has nonpositive density by $4$-lightness, and is clique-reducible, a contradiction.

Every nonroot has degree at least five. Deleting a nonroot $v$ of degree at most four does not decrease density. If the resulting graph is $4$-light, minimality is contradicted. Otherwise let $Z$ be a positive small fragment after deletion. It must have been adjacent to $v$. Restoring $v$ to its boundary produces a positive fragment of boundary exactly four. The left side of this root separation contains the four original roots and $v$. It is a proper root 4-separation, which was just excluded.

We claim that $R$ is internally 4-connected. A fragment with boundary zero or one is clique-reducible. If a fragment has boundary two, each component of its interior meets both boundary vertices, since otherwise there would be a fragment of smaller boundary. It therefore supplies a path between the two boundary vertices and is clique-reducible. For a 3-boundary fragment, a rooted triangle also makes it clique-reducible. If there is no rooted triangle, apply the 3-root skeleton classification stated after Theorem~\ref{thm:inputs}. Every component outside the skeleton is a fragment of the entire graph with boundary at most two, and so none exists. The original fragment then has at most one vertex. Such a vertex has degree at most three, contrary to the preceding paragraph.

Complete the four roots to a clique, obtaining $R^*$. This preserves internal 4-connectivity, and
\[
e(R^*)=4(|V(R)|-4)+\rho_4(R)+6\ge4|V(R)|-9.
\]
Theorem~\ref{thm:inputs}(4) gives a rooted $D^*$ model. Each added edge joins distinct original roots, which belong to distinct root branch sets. Thus no added edge is used within a branch set or for an edge of the independent-root target. The model already exists in $R$, a contradiction.
\end{proof}
\needroom{6\baselineskip}
\begin{lemma}\label{lem:normal}
Suppose a 4-rooted graph $R$ has a rooted $D^*$ minor. Its two nonroot branch sets can be chosen as $A,B$ so that
\[
\dd_R(A\cup B)=\{d_1,d_2,d_3,d_4\},
\]
every $d_i$ has a neighbour in both $A$ and $B$, and the four roots have pairwise disjoint paths to the corresponding $d_i$ avoiding $A\cup B$.
\end{lemma}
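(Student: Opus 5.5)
We start from any rooted $D^*$ model in $R$, with root branch sets $X_1,\dots,X_4$ containing the roots $x_1,\dots,x_4$ and nonroot branch sets $A,B$, and normalize it by an extremal choice. Among all rooted $D^*$ models we take one in which $|A\cup B|$ is maximal. Subject to that, the root branch sets are as small as possible: each $X_i$ is replaced by a minimal connected subgraph containing $x_i$ and one neighbour of $A$ and one neighbour of $B$. All other vertices are discarded, since extra edges and vertices may be deleted.

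The first step is to grow $A\cup B$. Suppose some vertex $v\notin A\cup B$ is adjacent to $A\cup B$ but lies in no root branch set. Then $v$ can be added to whichever of $A$ or $B$ it touches. The enlarged set is still connected and all required adjacencies survive, which contradicts maximality. Hence every vertex of $\dd_R(A\cup B)$ lies in some $X_i$.

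Next we cut each $X_i$ down to a path. Since $X_i$ touches both $A$ and $B$, we pick a vertex $u\in X_i$ that is adjacent to $A$. Suppose $u$ has no neighbour in $B$, and consider a shortest path $P$ in $X_i$ from $u$ to a vertex adjacent to $B$. Then every vertex of $P$ other than its ends can be moved into $A$, and the model remains valid provided $X_i$ keeps a connected part containing $x_i$ and a vertex adjacent to both sets. This is where the choice needs care.

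The cleaner route is to choose $d_i\in X_i$ as a vertex adjacent to both $A$ and $B$. Such a vertex is obtained by repeatedly absorbing into $A$ the vertices of $X_i$ that are adjacent to $A$ but not to $B$, provided they are not on the path from $x_i$ to the $B$-contact. Each absorption enlarges $A\cup B$, so by maximality we may assume there is a vertex $d_i\in X_i$ adjacent to both $A$ and $B$. We then set $X_i$ to be a shortest $x_i$--$d_i$ path $P_i$ inside the original $X_i$. The vertices of $X_i\setminus P_i$ are released, and any released vertex adjacent to $A\cup B$ is absorbed into $A$ or $B$, once more enlarging $A\cup B$.

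Under the maximal choice, $X_i$ is therefore exactly the path $P_i$, and $d_i$ is its only vertex adjacent to $A\cup B$. If an internal vertex of $P_i$ were adjacent to $A\cup B$, we could shorten $P_i$ to end there and reabsorb the tail, but only if that new endpoint sees both $A$ and $B$. This is the main obstacle. Suppose an interior vertex $w$ of $P_i$ sees only $A$. Then we absorb the segment of $P_i$ from $w$ onwards except $d_i$, together with $w$ itself, into $A$. The path from $x_i$ to $w$'s predecessor then no longer reaches $d_i$, so we must argue via the lexicographic extremality instead.

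To handle this, we take $d_i$ to be the first vertex along $P_i$ from $x_i$ that is adjacent to $A\cup B$, say adjacent to $A$. If it has no neighbour in $B$, we absorb the rest of $P_i$ beyond it into $B$. This keeps $B$ connected, because the last vertex of $P_i$ touched $B$, and it keeps the edge between $A$ and $B$, because that vertex's neighbour sits on the absorbed segment next to $A$-contacts. The absorption makes $d_i$ adjacent to $B$ and strictly increases $|A\cup B|$ unless $d_i$ was already the endpoint.

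With this done, $\dd_R(A\cup B)=\{d_1,\dots,d_4\}$, each $d_i$ sees both $A$ and $B$, and the $P_i$ minus $d_i$ give the required disjoint paths avoiding $A\cup B$. They avoid $A\cup B$ because the branch sets are disjoint.
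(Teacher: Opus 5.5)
Your extremal set-up (maximize $|A\cup B|$, then minimize the root branch sets) matches the paper's. The exchange in your last paragraph is also valid: if the first vertex of $P_i$ meeting $A\cup B$ sees only $A$, absorbing the rest of $P_i$ into $B$ keeps a rooted $D^*$ model and enlarges $A\cup B$. But that move, applied symmetrically, needs $P_i$ to end at a vertex seeing both $A$ and $B$, since this keeps the enlarged nonroot set connected. The existence of such a $d_i$, claimed in your third paragraph, is not established. You only absorb into $A$ those $A$-only vertices of $X_i$ that lie off the path from $x_i$ to the $B$-contact. This does not exclude, for example, $X_i$ being a path $x_i\,y\,b$ with $y$ adjacent only to $A$ and $b$ adjacent only to $B$. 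Here $y$ lies on the $x_i$--$b$ path, so your rule does nothing, yet no vertex of $X_i$ sees both sets. So ``by maximality we may assume'' does not follow from the argument given, and everything after it depends on this claim. Your second and fifth paragraphs begin arguments that you then abandon, and they should be removed.

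The repair is to use your tail-absorption before assuming $d_i$. On a path in $X_i$ from $x_i$ to a $B$-contact, the first vertex meeting $A\cup B$ cannot be $A$-only, and symmetrically for a path to an $A$-contact. Suppose these two first contacts are $B$-only and $A$-only respectively. Then minimality makes $X_i$ the union of the two initial segments. The $A$-only contact is an end vertex off the other segment, so your third-paragraph move applies and gives a contradiction.

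The paper avoids this case analysis by working directly with the non-root leaves of the minimal root trees. A leaf meeting neither nonroot set is deleted, and a leaf meeting exactly one is moved into it. A leaf meeting both is moved into $B$ whenever another vertex of its tree meets $A$, and symmetrically. Hence each nontrivial root tree has a unique non-root leaf, which is its only vertex meeting $A\cup B$, and the tree is a path.

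Finally, the required paths are the $P_i$ themselves, which end at $d_i\notin A\cup B$, not $P_i$ with $d_i$ removed.
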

\begin{proof}
Maximize $|A\cup B|$, and then choose inclusionwise minimal trees for the root branch sets. Consider a leaf other than the prescribed root of one of these trees. If it has no neighbour in either $A$ or $B$, delete it. If it meets exactly one of $A,B$, move it to that branch set. The tree's parent edge preserves adjacency to that set, and adjacency to the other set is unaffected. Both alternatives contradict the choice of model. Thus every such leaf meets both $A$ and $B$.

If another vertex in the same root tree meets $A$, move the leaf to $B$. The other vertex preserves adjacency to $A$, and the parent edge preserves adjacency to $B$. This increases $|A\cup B|$, a contradiction. The same reasoning applies with $A,B$ interchanged. Hence a nontrivial root tree has a unique nonroot leaf, its only vertex meeting either nonroot branch set. The tree is a path to this leaf. A singleton root tree already has the required form. Call these four endpoints $d_i$. Any unused vertex adjacent to $A\cup B$ could be absorbed into a nonroot branch set. Maximality rules this out, proving the assertion about the entire external neighbourhood.
\end{proof}

\needroom{14\baselineskip}
\subsection{Five roots}\label{subsec:five}
\begin{theorem}\label{thm:five}
Every $4$-light 5-rooted graph $R$ with $\rho_4(R)\ge2$ has a rooted minor in $\W^+$.
\end{theorem}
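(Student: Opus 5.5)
We argue by minimal counterexample, in the same way as for Lemma~\ref{lem:four}. Let $R$ be a counterexample with the fewest vertices, and delete all edges between its roots; this changes neither the hypotheses nor the conclusion. Since $\rho_4(R)\ge2$, $R$ is quite heavy, so Theorem~\ref{thm:inputs}(3) gives a rooted model of some $W\in\W\setminus\W^+$. We normalise this model first and then use the density surplus to repair the one extra defect.

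\emph{No reducible fragment.} Suppose $R$ has a reducible fragment. Take an inclusionwise maximal one and pass to its reducent. By Lemma~\ref{lem:stability}(1) the reducent is $4$-light, by \eqref{eq:increment} its rooted density is still at least two, and it has fewer vertices. Minimality then gives a $\W^+$ model in the reducent, and composing with the reduction lifts it to $R$.

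\emph{No positive proper 5-separation.} Suppose $(A,B)$ is a proper root 5-separation with $\rho_4(R_{A,B})\ge2$. Its right-hand side inherits $4$-lightness, so minimality gives a $\W^+$ model there. If a linkage of size five exists, it transfers the model to the original roots. If not, Lemma~\ref{lem:nonfull} produces a reducible fragment, contradicting the previous step. It remains to control proper 5-separations whose right side has density exactly $0$ or $1$. For these, I would split the density as $\rho_4(R)=\rho_4(R_{A,B})+\rho_4(\text{left side with }A\cap B\text{ as nonroots})$ and check whether the surplus lies on the left. I expect this accounting to be delicate, and it may force the bookkeeping to stay at density two throughout.

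\emph{Degrees and connectivity.} Deleting a nonroot of degree at most four does not decrease density. If the resulting graph stays $4$-light, minimality applies. If not, a positive small fragment appears, and restoring the deleted vertex to its boundary gives a positive 5-separation, handled as above. Hence every nonroot has degree at least five. As in Lemma~\ref{lem:four}, the 3-root classification and Lemma~\ref{lem:four} then exclude $(\le4)$-fragments, so $R$ is internally 5-connected.

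\emph{Main step: upgrading the model.} Take the $W$-model with nonroot branch sets $P,Q$ and independent missing edges $px_1$, $qx_2$. Choose it so that $|P\cup Q|$ is maximal, and apply the leaf-moving normalisation of Lemma~\ref{lem:normal} so that every root branch set is a path whose endpoint $d_i$ sees $P$ or $Q$. Contract $P$ and $Q$ and consider the structure near $x_1$. If some vertex of the $x_1$-path, or a vertex of $Q$ adjacent to $P$, can be rerouted to create a $P$--$x_1$ adjacency without destroying another edge, we obtain a member of $\W^+$. Otherwise every path from $P$ to the branch set of $x_1$ passes through the branch set of $x_2$ or through $Q$. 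This yields a separation of order at most five around $P$ (or around $Q$) whose side has density at most one. Comparing the densities of the two sides against $\rho_4(R)\ge2$ should then contradict either $4$-lightness or the exclusion of positive 5-separations.

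Extracting such a separation with the right density is the main obstacle. If it fails, my fallback is to apply Theorem~\ref{thm:inputs}(1) with $\T_2=\Sc_{5,3}$ after contracting $P$ into a root. This would give a 4-rooted configuration of positive density, and Lemma~\ref{lem:four} would supply a $D^*$, which together with the remaining root yields a member of $\W^+$ directly.
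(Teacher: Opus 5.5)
There is a genuine gap. Your opening reductions are the same as the paper's: no reducible fragment, no proper root 5-separation whose right side has density at least two, and nonroots of degree at least five.

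Your claim that $R$ is internally 5-connected does not follow. A $4$-fragment of nonpositive density is reducible only if its rooted graph has a dart minor, and nothing supplies one here: Lemma~\ref{lem:four} and Theorem~\ref{thm:inputs}(2) both need positive density.

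The real problem is the main step, which is not carried out.
\begin{itemize}
\item Your rerouting obstruction says that $P$ is cut off from the branch set of $x_1$ by the branch sets of $x_2$ and $Q$. These are arbitrarily large vertex sets, so no separation of order at most five arises.
\item You also give no density computation for any side, so the promised contradiction with $\rho_4(R)\ge2$ never materialises.
\item The fallback fails as stated. $\T_2$-universality only produces patterns of root edges on the five roots and never creates nonroot branch sets, so it cannot produce a $4$-light $4$-rooted graph of positive density.
\item Even granted a $D^*$ on four roots, attaching the fifth root to the two nonroot branch sets with at most one defect is exactly the hard part.
\end{itemize}

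The missing idea is to contract a carefully chosen edge in the minimal counterexample, rather than to upgrade a given $\W$ model.
\begin{enumerate}
\item Minimising also the number of edges forces $\rho_4(R)=2$ exactly.
\item Contracting an edge $e$ with $\tau(e)\le3$ (triangles counted after completing the roots to a clique) therefore keeps density at least two.
\item Such an edge exists by Lemma~\ref{lem:edge}. Its proof uses the identity $\sum_{v\notin S}(d(v)+d_S(v)-8)=4$, the local rooted-star Lemma~\ref{lem:star}, and an isolator argument.
\item By minimality the contracted graph must fail $4$-lightness. A minimal positive fragment $Y$ there has the contraction vertex $w$ on its $4$-boundary, density exactly one, and no vertex adjacent to both $u$ and $v$.
\item Lemmas~\ref{lem:four} and~\ref{lem:normal} then give adjacent sets $A,B$ with exactly four external neighbours. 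Splitting $w$ back into $u,v$ supplies the fifth root.
\item In the remaining split case, $A$ sees only $u$ and $B$ only $v$. The five-path linkage lets you move $v$ into $B$, so only the adjacency between $A$ and the root branch set $R_v$ (the linkage path $P_v$ with its end $v$ removed) can be missing.
\end{enumerate}
None of this mechanism appears in your proposal, so as it stands the theorem is not proved.
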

The proof contracts a suitable edge in a minimal counterexample and applies Lemmas~\ref{lem:four} and~\ref{lem:normal} to a positive fragment of the contracted graph. We first need a local lemma about rooted stars.
\needroom{12\baselineskip}
\begin{lemma}\label{lem:star}
Let $L$ be a graph, let $X\subseteq V(L)$, and suppose that
\[
5\le|V(L)|,\qquad |V(L)|+|X|\le8.
\]
Suppose that vertices outside $X$ have degree at least four and vertices in $X$ have degree at least $5-|X|$. For every 5-set $Z\supseteq X$, there is a $Z$-rooted $K_{1,4}$ minor whose centre is a vertex of $Z\setminus X$.
\end{lemma}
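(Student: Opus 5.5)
The plan is to reduce the statement to a Menger-type linkage problem and then rule out the obstruction by counting. Write $n=|V(L)|$ and $W=V(L)\setminus Z$. The hypotheses give $|X|\le3$ and $|W|=n-5\le3-|X|$, so $Z\setminus X\ne\varnothing$.

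For a vertex $c\in Z\setminus X$, a $Z$-rooted $K_{1,4}$ minor with centre $c$ is the same as four paths, one from $c$ to each root of $Z\setminus\{c\}$, which share only $c$ and whose interiors lie in $W$. Given such paths, put each interior into the branch set of the root at its far end, or alternatively into the centre branch set. Delete all remaining vertices of $W$. Deleting the edges inside $Z\setminus\{c\}$ turns this into a vertex-Menger problem: the other roots become pure terminals. Thus it suffices to show that, for some admissible $c$, no set $S\subseteq V(L)\setminus\{c\}$ with $|S|\le3$ separates $c$ from $Z\setminus\{c\}$, where $S$ may contain roots.

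The first step is the base case $W=\varnothing$, that is, $n=5$. Any $c\in Z\setminus X$ has degree at least four in a five-vertex graph. It is therefore adjacent to all other roots, and the star is immediate.

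In general, suppose the separator $S$ exists for a chosen $c$. Let $C$ be the vertex set of the component of $L-S$ containing $c$, taken in the graph with root--root edges removed. Then $C\setminus\{c\}\subseteq W\setminus S$, so $|C|\le1+|W|\le4-|X|$. Every neighbour of $c$ lies in $(C\setminus\{c\})\cup S$, so
\[
\deg c\le |C|-1+|S|\le |W|+3.
\]
This is only a contradiction when $|W|=0$. In the other cases the separator must contain vertices of $W$. We then exploit degrees of vertices $w\in C\cap W$: such a vertex has degree at least four, since $w\notin X$, and all its neighbours lie in $C\cup S$. We also use the freedom to choose $c$ among the $5-|X|$ vertices of $Z\setminus X$.

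So the key step is to show that not every vertex of $Z\setminus X$ can be separated. I would run a short case analysis on $|X|\in\{0,1,2,3\}$, using $|W|\le3-|X|$.

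For $|X|=3$ we have $W=\varnothing$, the base case. For $|X|=2$ we have $|W|\le1$. If $W=\{w\}$, a failing $c$ must be nonadjacent to some root $z$ and reach it only through $w$. Then both $c$ and the other vertex of $Z\setminus X$ have degree at least four inside a six-vertex graph. Counting the missing root--root adjacencies, together with the requirement that vertices of $X$ have degree at least $3$, should produce a good centre.

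The cases $|X|\le1$ allow $|W|=2$ or $3$ and are the main obstacle. Here I would first try to choose $c\in Z\setminus X$ with the maximum number of neighbours in $Z$. If $c$ misses roots $z_1,\dots,z_j$, then each $z_i$ has degree at least four (or at least $5-|X|$ if $z_i\in X$). Since $z_i$ is not adjacent to $c$, it has at least $4-|W|$ neighbours in $Z\setminus\{c\}$ and some neighbour in $W$. I would then try to build greedily disjoint connections $c\to W\to z_i$. Hall's condition for matching the missing roots to distinct $W$-neighbours of $c$ is where the degree bounds, the constraint $n+|X|\le8$, and the maximality of $c$ must all be used. I expect a few small configurations, for example $L[W]$ independent and $c$ adjacent to only one $W$-vertex, to need separate handling by switching the centre to another vertex of $Z\setminus X$.
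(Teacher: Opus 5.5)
Your reformulation is only a sufficient condition, not the equivalence you claim. Four paths from $c$ that share only $c$ do give the star. But a star may use a larger centre branch set such as $\{c,w\}$ with $w\in W$. Suppose $c$ sees two roots and every route from $c$ through $W$ to the other two roots passes through $w$. Then $N_Z(c)\cup\{w\}$ is a 3-separator, so there is no fan at $c$, even though $\{c,w\}$ is a valid centre. For example, take $L[Z]$ the cycle $z_1\cdots z_5$, $L[W]$ the path $w_1w_2w_3$, $N_W(z_1)=\{w_1,w_2\}$, $N_W(z_2)=N_W(z_5)=\{w_1,w_3\}$, $N_W(z_3)=N_W(z_4)=\{w_2,w_3\}$, with $c=z_1$ and $w=w_2$.

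So you are proving a strictly stronger fan statement, and you discard exactly the freedom the paper uses. The paper splits on whether $L[W]$ has an isolated vertex $u$. If so, $u$ sees at least four roots, and a degree count finds $z\in Z\setminus X$ adjacent to the root $u$ misses; then $\{u,z\}$ is the centre. Otherwise $L[W]$ is connected. Either some vertex of $Z\setminus X$ has no neighbour in $W$, and hence sees all other roots, or all of $W$ is absorbed into one root. The fan statement does turn out to be true here, so your route is not doomed, but it costs more. For $|W|=3$, for instance, you must rule out the configuration in which every root $c$ has a twin $w\in W$ with $N[c]=N[w]$ of size five.

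The genuine gap is that the key step is never carried out. After $W=\varnothing$, the case $|X|=2$ ends with ``should produce a good centre''. Also, $Z\setminus X$ then has three vertices, not two. The short argument there is this: a missed root $t$ with $w\not\sim t$ has degree at most three, so $t\in X$ and $t$ sees every root except $c$. Hence any other vertex of $Z\setminus X$ works.

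The cases $|X|\le1$, $|W|\in\{2,3\}$, which you yourself call the main obstacle, are only a plan. Moreover, the proposed tool does not match the condition you reduced to. Hall's condition for matching missed roots into $N_W(c)$ only sees paths with one interior vertex. Menger paths may run through two or three vertices of $W$, such as $c\,w_1w_3\,t$. So a Hall failure neither produces a separator nor tells you which centre to switch to. As written, the claim that some $c\in Z\setminus X$ has no separator of size at most three is asserted rather than proved, and the proposal does not yet prove the lemma.
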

\begin{proof}
Put $U=V(L)\setminus Z$. Then $|U|\le3$ and $|X|\le3-|U|$. If $U=\varnothing$, any vertex in $Z\setminus X$ is already a centre. Suppose that $L[U]$ has an isolated vertex $u$. It is adjacent to at least four vertices of $Z$. If it meets all five, absorb it into any $z\in Z\setminus X$. Otherwise let $y$ be its unique nonneighbour in $Z$. If $y\notin X$, then $y$ has at least $5-|U|$ neighbours in $Z$, which is more than $|X|$. If $y\in X$, it has at least $6-|X|-|U|$ neighbours in $Z$, which is more than $|X|-1$ since $|X|+|U|\le3$. Thus some neighbour $z$ of $y$ lies in $Z\setminus X$. As $y$ is the only root missed by $u$, the edge $uz$ is present. The connected centre branch set $\{u,z\}$ meets the other four roots.

It remains that $U$ has two or three vertices and $L[U]$ has no isolated vertex. Hence it is connected. If a vertex of $Z\setminus X$ has no neighbour in $U$, its degree makes it adjacent to all other roots and it is a centre. Otherwise every vertex of $Z\setminus X$ meets $U$. When $|U|=3$, we have $X=\varnothing$, and absorbing $U$ into any root gives the star. When $|U|=2$, we have $|X|\le1$. If its possible vertex $x$ also meets $U$, the same construction works. If it does not, its degree is at least four, so it meets every other root. Absorb $U$ into any root outside $X$. Unused vertices may be deleted.
\end{proof}
Suppose Theorem~\ref{thm:five} is false, and choose a counterexample $G$ minimizing its number of vertices, and then its number of edges. Let $S$ be its five roots, and write $d_S(v)=|N_G(v)\cap S|$. Root edges can be deleted, so $G[S]$ is independent. We work with this fixed counterexample until the proof of Theorem~\ref{thm:five} is complete.
\needroom{6\baselineskip}
\begin{lemma}\label{lem:irreducibleroot}
$G$ has no reducible $(\le4)$-fragment and no proper root 5-separation whose right-hand side has density at least two.
\end{lemma}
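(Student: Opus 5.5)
Both assertions will follow from minimality of the counterexample $G$ (fewest vertices, then fewest edges): a reducible fragment or a heavy proper 5-separation yields a smaller graph that still satisfies the hypotheses of Theorem~\ref{thm:five}, and its $\W^+$ model lifts back to $G$.

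\emph{Reducible fragments.} Suppose $G$ has a reducible $(\le4)$-fragment. Choose an inclusionwise maximal one, $Y$, and form the reducent $G'=(G-Y)\cup D$.
\begin{itemize}
\item $G'$ is a rooted minor of $G$ with fewer vertices, and it preserves the original roots.
\item By the rooted version of~\eqref{eq:increment}, its rooted density is at least $\rho_4(G)\ge2$. Added root--root edges are simply omitted.
\item Lemma~\ref{lem:stability}(1) shows that $G'$ is $4$-light.
\end{itemize}
Minimality then gives a rooted $\W^+$ model in $G'$. We compose it with the model of $D$ inside $G[Y\cup\dd Y]$. Each edge of $D$ used by the $\W^+$ model is realized through branch sets in $Y$, and the new dart vertex is replaced by its branch set. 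This gives a $\W^+$ model in $G$, a contradiction. One point needs care: $D$ must be a minor of $R_Y$ rooted at $\dd Y$ with the original roots kept in their own branch sets. This holds because the roots lie in $G-Y$.

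\emph{Heavy 5-separations.} Suppose $(A,B)$ is a proper root 5-separation with $\rho_4(R_{A,B})\ge2$. Since $A\cap B$ has size five, every $(\le4)$-fragment of $R_{A,B}$ has the same boundary and density in $G$, so $R_{A,B}$ is $4$-light. It has fewer vertices, so minimality gives a $\W^+$ model rooted at $A\cap B$. We then split on whether there is a linkage of size five from $S$ to $B$.
\begin{itemize}
\item If there is such a linkage, we add its paths to the root branch sets. The paths meet $B$ only at the terminators, so the model transfers to $S$, a contradiction.
\item If there is no such linkage, we apply Lemma~\ref{lem:nonfull} with $k=5$; its hypotheses hold because $\W^+\subseteq\W$. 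It gives a reducible fragment containing $B\setminus A$, which the first part has already excluded.
\end{itemize}

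The main obstacle is the first part. Composing models must not use a root--root edge of $D$ between two original roots as an edge of the template. This is harmless, since the templates have independent roots and such edges are never required. The density bookkeeping also needs checking: edges of $D$ joining two roots do not count in $\rho_4$, so the rooted increment remains $-\rho_4(G,Y)$ plus the number of added non-root-pair edges, which is $\ge0$.
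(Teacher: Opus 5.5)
Your proposal is correct and follows the paper's own argument. For a reducible fragment, you reduce an inclusionwise maximal one via Lemma~\ref{lem:stability}(1) and lift the $\W^+$ model of the smaller graph; for a heavy proper 5-separation, you apply minimality to the $4$-light right-hand side and then either transfer along a full linkage or use Lemma~\ref{lem:nonfull} to obtain a reducible fragment excluded by the first part, with your extra checks on model composition and root--root edges filling in details the paper leaves implicit.
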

\begin{proof}
An inclusionwise maximal reducible fragment could be reduced using Lemma~\ref{lem:stability}(1), producing a smaller $4$-light counterexample of at least the same density. For the second assertion, a smaller right-hand side inherits $4$-lightness and has a $\W^+$ model by minimality. A full linkage transfers this strong model. If no full linkage exists, Lemma~\ref{lem:nonfull} gives the forbidden reducible fragment. The full-linkage case must transfer the $\W^+$ model. The existence of a weak $\W$ model alone is consistent with the choice of $G$.
\end{proof}
\needroom{6\baselineskip}
\begin{lemma}\label{lem:rootdegrees}
The graph $G-S$ is connected, every nonroot has at most three neighbours in $S$, and every root has degree at least two. Moreover
\[
\rho_4(G)=2,\qquad |V(G)\setminus S|\ge5.
\]
\end{lemma}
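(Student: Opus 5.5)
We prove the five assertions of Lemma~\ref{lem:rootdegrees} in turn. Each one comes from the minimality of $G$ (fewest vertices, then fewest edges) together with Lemma~\ref{lem:irreducibleroot}.

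\emph{Density exactly two.} The edge-minimality is the tool here. If $\rho_4(G)\ge3$, delete a root--nonroot edge; one exists, since otherwise the density would be $e(G-S)-4|V(G)\setminus S|\le0$ by $4$-lightness. The deletion lowers the density by one, so the new density is still at least two. Deleting edges cannot create a positive small fragment, so the smaller graph stays $4$-light. It has a $\W^+$ model by minimality, and that model lives in $G$, a contradiction. Hence $\rho_4(G)=2$.

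\emph{Connectivity of $G-S$.} Suppose $G-S$ is disconnected, and let $C_1,\dots,C_m$ be its components. The densities of the rooted graphs $(G[C_i\cup N(C_i)],N(C_i))$ sum to $\rho_4(G)=2$. Consider a component with $|N(C_i)|\le4$. It is a $(\le4)$-fragment, so its density is nonpositive by $4$-lightness. If its rooted graph has a boundary clique or dart minor, it is reducible, which Lemma~\ref{lem:irreducibleroot} forbids. By Theorem~\ref{thm:inputs}(2), such a minor exists once the density is positive; for the case of density exactly zero we use Lemma~\ref{lem:cancel} and the normalisation. The remaining components have boundary exactly $S$ and together carry density at least two, so some single component $C$ has density at least two. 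If there are other components, then $(V(G)\setminus C,\,C\cup S)$ is a proper root 5-separation whose right-hand side has density at least two, again contradicting Lemma~\ref{lem:irreducibleroot}.

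\emph{At most three root neighbours per nonroot.} Suppose a nonroot $v$ has $d_S(v)\ge4$. Contract an edge $vs$ with $s\in S$, keeping $s$ as a root. This removes one nonroot, which adds $4$ to the density. It also loses the edge $vs$ and, for each other root neighbour of $v$, turns the edge into a root--root edge that no longer counts. The net change is $4-d_S(v)$ minus any merged edges, so when $d_S(v)\le4$ the density stays at least two. We must then check that the contracted graph is $4$-light. A positive $(\le4)$-fragment in it lifts to a fragment of $G$ whose boundary is larger by at most one. That gives either a positive $(\le4)$-fragment of $G$, contradicting $4$-lightness, or a 5-fragment, i.e.\ a proper root 5-separation of density at least two, which Lemma~\ref{lem:irreducibleroot} excludes. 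Minimality then yields a $\W^+$ model after contraction, and it lifts to $G$. The case $d_S(v)=5$ instead goes through the isolated-nonroot analysis: $v$ can serve as $q$, and it remains to find $p$ in $G-S-v$ adjacent to four roots. Here I would use Theorem~\ref{thm:inputs}(1) on the 5-rooted graph obtained by deleting $v$.

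\emph{Degrees and size.} A root of degree at most one can be handled as follows. If its degree is one, contract its unique edge, keeping the root; this removes a nonroot and adds $4-1$ to the density, so the density stays at least two. A root of degree zero is simply irrelevant, since it cannot occur in any model and can be dealt with directly. Finally, suppose $|V(G)\setminus S|\le4$. Then $e(G)=4\cdot|V\setminus S|+2$ with at most three root edges per nonroot. Counting edges bounds the density by $\binom{t}{2}+3t-4t$, which is below two for $t\le4$, and that is a contradiction.

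The main obstacle is preserving $4$-lightness after the contractions. The lift of a positive fragment can gain a boundary vertex, so one must rule out the resulting 5-separations and the reducible fragments.
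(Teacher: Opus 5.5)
Your proposal has genuine gaps, several of them at the points where the paper does real work.

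\textbf{Density and connectivity.} The density step relies on ``deleting edges cannot create a positive small fragment,'' which is false. Let $Y$ be a 5-fragment with $\rho_4(G,Y)\ge2$, and suppose the deleted edge is the only edge from $Y$ to some $u\in\dd_GY$. After the deletion, $Y$ has boundary four and density at least one. For this reason the paper establishes $\rho_4(G)=2$ only after the degree facts. By Lemma~\ref{lem:irreducibleroot}, the root 5-separation of such a $Y$ must be non-proper. Hence $Y=V(G)\setminus S$ and $u$ is a root of degree one. That is excluded only once roots are known to have degree at least two. That fact in turn needs $d_S(v)\le3$: replacing a degree-one root $x$ by its neighbour $v$ gives a proper root 5-separation of density $\rho_4(G)+4-d_S(v)\ge3$.

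In the connectivity step, ``the components with boundary $S$ carry density at least two, so one of them has density at least two'' does not follow, since there could be two components of density one. The missing observation is that two positive components, each meeting all five roots, contract to $K_{2,\downarrow5}$, which already lies in $\W^+$. Your discussion of clique or dart minors for components with at most four boundary vertices is unnecessary. Theorem~\ref{thm:inputs}(2) does not apply there anyway, because those components have nonpositive density.

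\textbf{Root neighbours and size.} The bound $d_S(v)\le3$ cannot be obtained by contracting $vs$. For $d_S(v)=4$ the contraction changes the density by $-|N(v)\cap N(s)|$, so it drops below two as soon as $v$ and $s$ have a common neighbour. Even without such a neighbour, lifting a positive $(\le4)$-fragment of $G/vs$ with $s$ on its boundary gives only a 5-fragment of density at least \emph{one}. Lemma~\ref{lem:irreducibleroot} does not exclude that; it is exactly the hard case in the completion of Theorem~\ref{thm:five}. Your $d_S(v)=5$ plan via $\T_1$-universality also does not produce a vertex adjacent to four roots.

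The paper argues directly with the components $Q_i$ of $G-(S\cup\{z\})$ and the identity $\sum_i\rho_4(G,Q_i)=\rho_4(G)+4-d$. By $4$-lightness, every positive component meets at least four roots. For $d=5$, one such component together with $z$ gives a $\W^+$ model. For $d=4$, each positive component has density exactly one, since otherwise it gives a proper root 5-separation of density at least two. So there are at least two positive components. Merging $z$ with a component that meets the root $z$ misses, and retaining another positive component, gives the model.

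Finally, your size count fails at $t=4$: $\binom{4}{2}+3\cdot4-4\cdot4=2$, which is not below two. Equality forces $G-S=K_4$, with every nonroot missing exactly two roots. The paper then needs a separate pairing argument on the multigraph of missing root pairs, which has maximum degree two because roots have degree at least two. This contracts the four nonroots into two adjacent vertices forming a $\W^+$ model.
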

\begin{proof}
Every positive-density component of $G-S$ meets all five roots. Two such components give a rooted $K_{2,\downarrow5}$ after contraction. There is therefore exactly one positive-density component, and its density is at least two. Lemma~\ref{lem:irreducibleroot} implies that it contains all nonroots. In particular, $G-S$ is connected and every root has a neighbour.

Suppose a nonroot $z$ has $d\ge4$ root neighbours. Let $Q_1,\ldots,Q_t$ be the components of $G-(S\cup\{z\})$. Each meets $z$, and
\begin{equation}\label{eq:components}
\sum_i\rho_4(G,Q_i)=\rho_4(G)+4-d.
\end{equation}
If $d=5$, some component has positive density and thus has at least four root neighbours. Contracting it gives a $\W^+$ model with $z$. Suppose $d=4$, and let $x$ be the missing root neighbour of $z$. A component meeting all five roots gives the same contradiction. Every positive-density $Q_i$ consequently has boundary $z$ and four roots. Its density cannot be at least two, by Lemma~\ref{lem:irreducibleroot}, since its root separation is proper. It has density one. Equation~\eqref{eq:components} now implies that there are at least two positive components. If a positive component meets $x$, merge it with $z$ and retain another positive component. Otherwise merge $z$ with any component meeting $x$, which exists, and retain a positive component. The merged branch set meets all five roots, the retained one meets at least four, and they are adjacent. This is a $\W^+$ model.

If a root $x$ has unique neighbour $v$, replacing $x$ by $v$ as a root on the remaining graph gives a proper root 5-separation whose right-hand side has density $\rho_4(G)+4-d_S(v)\ge3$. This contradicts Lemma~\ref{lem:irreducibleroot}.

If $\rho_4(G)\ge3$, delete any edge. Minimality forces the resulting graph to fail $4$-lightness. Let $Y$ be a positive small fragment there. The deleted edge must join $Y$ to a vertex $u$ outside its closed neighbourhood. Otherwise $Y$ already violates lightness in $G$. Restoring that edge gives a positive fragment of boundary exactly five and density at least two. Its root separation cannot be proper by Lemma~\ref{lem:irreducibleroot}. Its left side is therefore $S$, and $u\in S$ has only the restored edge as a neighbour, contrary to its degree. Thus $\rho_4(G)=2$.

Write $k=|V(G)\setminus S|$. If $k\le3$, the bound on the number of root neighbours of each nonroot gives $\rho_4(G)\le\binom{k}{2}-k\le0$. If $k=4$, equality with density two forces the nonroots to induce $K_4$, each with three root neighbours. Regard each missing root pair as an edge of a multigraph on $S$. It has four edges and maximum degree at most two, because every root has degree at least two in $G$. If an edge is repeated, its endpoints are saturated, and pair its two copies with the other two edges. Both pairs consist of disjoint edges. Otherwise this multigraph is $P_5$, $C_4\sqcup K_1$, or $C_3\sqcup K_2$. In each case its edges can be partitioned into two pairs, one disjoint and the other sharing at most one endpoint. Pair the four nonroots accordingly and contract each pair. One resulting vertex meets all roots and the other at least four. They are adjacent. Again this is a $\W^+$ model. Hence $k\ge5$.
\end{proof}
For an edge $uv$ with both ends nonroots, put $\tau(uv)=|N(u)\cap N(v)|$. For an edge $xv$ with $x\in S$, put
\[
\tau(xv)=|N(x)\cap N(v)|+d_S(v)-1.
\]
As $S$ is independent, this is the triangle count after completing $S$ to a clique. Counting the edges that merge or become root edges gives
\begin{equation}\label{eq:contraction}
\rho_4(G/e)=\rho_4(G)+3-\tau(e)=5-\tau(e).
\end{equation}
\needroom{6\baselineskip}
\begin{lemma}\label{lem:edge}
Some edge $e$ of $G$ satisfies $\tau(e)\le3$.
\end{lemma}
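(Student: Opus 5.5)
The plan is a counting argument that produces a nonroot $v$ of small degree, followed by a local analysis of $N(v)$ using Lemma~\ref{lem:star}. Suppose, for a contradiction, that $\tau(e)\ge4$ for every edge $e$ of $G$. Let $G^*$ be $G$ with $S$ completed to a clique. By the remark after the definition of $\tau$, $\tau(e)$ is the number of triangles of $G^*$ through $e$.

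\textbf{Step 1: a nonroot of small degree.} Write $k=|V(G)\setminus S|$, so $k\ge5$ by Lemma~\ref{lem:rootdegrees}. Since $G[S]$ is independent and $\rho_4(G)=2$, we have $e(G)=4k+2$. Summing over the nonroots,
\[
\sum_{v\notin S}\bigl(\deg v+d_S(v)\bigr)=2e(G)-e_G(S,V\setminus S)+e_G(S,V\setminus S)=8k+4<9k .
\]
Hence some nonroot $v$ satisfies $\deg v+d_S(v)\le8$. By Lemma~\ref{lem:rootdegrees}, $d_S(v)\le3$.

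\textbf{Step 2: apply Lemma~\ref{lem:star} inside $N(v)$.} Put $L=G[N(v)]$ and $X=N(v)\cap S$, so $|V(L)|+|X|\le8$. For a nonroot neighbour $u$ of $v$, the assumption $\tau(uv)\ge4$ gives $\deg_L u\ge4$. For a root neighbour $x$, the assumption $\tau(xv)\ge4$ gives $|N(x)\cap N(v)|\ge5-d_S(v)=5-|X|$. These are exactly the degree hypotheses of Lemma~\ref{lem:star}. We still need $|V(L)|\ge5$, that is, every nonroot has degree at least five. I would derive this from Lemma~\ref{lem:irreducibleroot}: a nonroot of degree at most four whose deletion preserves density would either give a smaller counterexample or, after restoring it to a boundary, a reducible fragment or a forbidden proper root 5-separation. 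This mirrors the degree argument in the proof of Lemma~\ref{lem:four}.

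Choose a 5-set $Z$ with $X\subseteq Z\subseteq N(v)$. Lemma~\ref{lem:star} then gives a $Z$-rooted $K_{1,4}$ minor inside $L$ whose centre branch set $C$ avoids $X$, hence avoids $S$. Then $\{v\}$ and $C$ are adjacent, $v$ is adjacent to all of $Z$, and $C$ meets four vertices of $Z$. Relative to the roots $Z$, this is a template in $\W^+$ carried by $N[v]$.

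\textbf{Step 3: transfer to the true roots $S$ (main obstacle).} The model must be moved from $Z$ to $S$. Roots in $X$ already coincide, so it suffices to find $5-|X|$ disjoint paths from $S\setminus X$ to $Z\setminus X$ that avoid $v$ and the interior of $C$. This is where I expect the real difficulty.

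The tool is Menger's theorem in $G-(\{v\}\cup C)$, with the root branch sets of the star grown along the paths. Suppose fewer than five disjoint $S$--$Z$ paths exist. Then there is a separation of order at most four separating $S$ from the side containing $\{v\}\cup C$. Its right-hand side contains $v$ and $C$, and I would show it is a small fragment. By $4$-lightness it has nonpositive density. Restricting the template to the terminators, as in Lemma~\ref{lem:nonfull}, gives a boundary clique, or a dart when the order is four. The fragment then has at least two vertices, so it is reducible, contradicting Lemma~\ref{lem:irreducibleroot}.

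There is a second possibility: the linkage exists only after routing through vertices of $Z$ that are themselves cut off. I would handle it via a proper root 5-separation whose right-hand side contains $N[v]$. If its density is at least two, Lemma~\ref{lem:irreducibleroot} again gives a contradiction. If its density is at most one, the count \eqref{eq:components} applied around $v$ contradicts $\rho_4(G)=2$.

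In every case, $G$ would have a rooted $\W^+$ minor, contradicting the choice of $G$. Hence some edge $e$ satisfies $\tau(e)\le3$.
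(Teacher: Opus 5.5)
Your Steps 1 and 2 match the paper's setup. Your deletion argument for $\deg v\ge5$ is valid, though the paper gets $|N(v)|\ge5$ more cheaply from a nonroot neighbour $t$ of $v$ and $\tau(vt)\ge4$.

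The first problem is the claim that the local configuration is a $\W^+$ template on $Z$. The centre branch set $C$ from Lemma~\ref{lem:star} contains the root $z\in Z\setminus X$, so $C$ is the root branch set of $z$, and $\{v\}$ is the only nonroot branch set. Growing $C$ along the path that ends at $z$ keeps it a root set. You would end with five root sets and one nonroot, which is not in $\W^+$. A second nonroot appears only if $z$ is the end of a linkage path of positive length. Then the path minus $z$ becomes the root branch set, joined to $C$ by its last edge, and $z$ stays in the nonroot set $C$. The single permitted defect is the adjacency between $v$ and that shortened path.

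The more serious gap is in Step~3, where you fix $Z$ before linking.
\begin{itemize}
\item A maximum linkage from $S$ to $N[v]$ may end outside $Z$. Then five disjoint $S$--$Z$ paths avoiding $v$ and $C$ need not exist, even though no small separation cuts off $N[v]$.
\item Your ``second possibility'' does not handle this. A proper root 5-separation whose right side has density at most one contradicts nothing. Also, \eqref{eq:components} concerns a nonroot with at least four root neighbours, whereas here $d_S(v)\le3$.
\item Your first case is also unjustified. A separator of size at most four in $G-(\{v\}\cup C)$ need not bound a fragment of $G$ containing $v$, because $v$ is adjacent to the vertices of $N(v)\setminus Z$, which may lie on the root side.
\end{itemize}

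The missing idea is to reverse the order. Take an isolator $(C',D')$ of the root separation $(V(G)\setminus\{v\},N[v])$, and a maximum linkage of size $q$ with terminators $T\supseteq X$, using zero-length paths at $X$. Only then choose $Z\supseteq T$; Lemma~\ref{lem:star} allows every such $Z$.
\begin{itemize}
\item If $q=5$, then $Z=T$, and the construction above gives a $\W^+$ model.
\item If $q\le4$, restrict the star together with $v$ to $T$, absorbing the centre into a terminator if $z\notin T$. This gives a rooted $K_q$, or a dart when $q=4$. Transfer it along the linkage tails to $C'\cap D'$. Then $D'\setminus C'$ is a reducible fragment with at least $|N[v]|-q\ge2$ vertices, contradicting Lemma~\ref{lem:irreducibleroot}.
\end{itemize}
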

\begin{proof}
Suppose that $\tau(e)\ge4$ for every edge $e$.

Since
\[
\sum_{v\notin S}(d(v)+d_S(v)-8)=2\rho_4(G)=4
\]
and there are at least five nonroots, some nonroot $v$ satisfies $d(v)+d_S(v)\le8$. Put $L=G[N(v)]$ and $X=S\cap N(v)$. If $t\in N(v)\setminus S$, then
\[
d_L(t)=|N(v)\cap N(t)|=\tau(vt)\ge4.
\]
For $x\in X$, the definition of $\tau(vx)$ instead gives $d_L(x)\ge5-|X|$. The graph $G-S$ is connected and has at least five vertices, so $v$ has a nonroot neighbour. The preceding degree bound for that neighbour gives $|N(v)|\ge5$. Thus $|V(L)|+|X|\le8$, as required by Lemma~\ref{lem:star}.

Take an isolator $(C,D)$ of the root separation $(V(G)\setminus\{v\},N[v])$, and write $q=|C\cap D|\le5$. A maximum linkage from $S$ to $N[v]$ has $q$ paths and terminators $T\subseteq N(v)$. Use zero-length paths at the roots in $X$, so $X\subseteq T$. Extend $T$ to a 5-set $Z\subseteq N(v)$. The proof of Lemma~\ref{lem:star} provides a connected centre branch set $H_z$ containing $z\in Z\setminus X$, with the other four roots of $Z$ as singleton branch sets. The set $H_z$ meets $Z$ only at $z$ and is adjacent to every other vertex of $Z$. Since $S\cap N(v)=X\subseteq Z$ and $z\notin X$, it contains no original root. Also $v$ is adjacent to all these sets. Linkage interiors avoid $N[v]$, so they avoid the local model.

\smallskip\noindent\emph{Case 1. $q=5$.}
Here $T=Z$. Since $z\notin S$, its linkage path has positive length. Remove its last vertex $z$ from its root branch set and retain $H_z$ as a nonroot branch set. Retain $v$ as the other nonroot and use the other four linkage paths as root branch sets. The last edge of the path to $z$ joins its remaining, nonempty root branch set to $H_z$. Both nonroot branch sets meet the other four root branch sets, and they are adjacent to each other. Only the edge from $v$ to the root branch set formerly ending at $z$ may be absent. These seven sets are disjoint and preserve all original roots, so they give a $\W^+$ model, a contradiction.

\smallskip\noindent\emph{Case 2. $1\le q\le4$.}
Delete unused roots of $Z$, retaining $H_z$ if its centre is not in $T$. For $q=2$ or $3$, construct a clique as follows. If $z\in T$, absorb $v$ into any other terminator. That enlarged set and $H_z$ meet every remaining terminator. If $z\notin T$, absorb $H_z$ into one terminator and $v$ into a different one. For $q=3$ the third terminator meets both enlarged sets. For $q=2$ the two enlarged sets are adjacent. Each absorption uses an edge, so the branch sets remain connected, disjoint, and rooted at distinct vertices of $T$. For $q=1$, retain the single terminator. Connectivity of $G$ excludes $q=0$.

For $q=4$, if $z\in T$, keep $H_z$ as its root branch set. If $z\notin T$, absorb $H_z$ into a terminator $t_0\in T$. In either case the four root branch sets contain a 3-edge star and $v$ is a nonroot adjacent to all four. Retain exactly two edges of that star and delete any other root edges. The root graph is $P_3\sqcup K_1$, so the resulting model is a dart.

The $q$ disjoint linkage paths meet the $q$ vertices of $C\cap D$ one each. Their tails in $D$ transfer the local clique or dart to these separator roots. A separator vertex in $N[v]$ must itself be a terminator. Hence the transfer uses no other separator root within a branch set. Put $F=D\setminus C$. It avoids $S$, has boundary at most $q\le4$, and contains at least $|N[v]|-q\ge2$ vertices. Thus $\rho_4(G,F)\le0$ by $4$-lightness. A clique model restricts to the actual boundary if necessary, by discarding the other root branch sets. In the dart case each of the four separator roots has a neighbour in $F$, since its branch set is connected to the nonroot branch set in $F$ without using another root. Thus the actual boundary has size four. In either case $F$ is reducible, contrary to Lemma~\ref{lem:irreducibleroot}.
\end{proof}
\begin{proof}[Completion of the proof of Theorem~\ref{thm:five}]
Choose $uv$ as in Lemma~\ref{lem:edge}, and let $J=G/uv$, with contraction vertex $w$. An edge of $G$ has at most one root end, since $S$ is independent. If $uv$ has a root end, $w$ retains its identity. Otherwise $w$ is a nonroot. Thus $J$ still has five distinct roots. Equation~\eqref{eq:contraction} gives $\rho_4(J)\ge2$.

A rooted $\W^+$ model in $J$ would lift to $G$ by expanding $w$ within its branch set. Since $J$ is smaller than the chosen counterexample, it must therefore fail $4$-lightness. Choose an inclusionwise minimal positive $(\le4)$-fragment $Y$ in $J$, and put $Q=\dd_JY$. The associated rooted graph $R_Y^J=(J[Y\cup Q],Q)$ is $4$-light. Indeed, a positive fragment inside it with boundary smaller than $|Q|$ would also be a positive $(\le4)$-fragment properly contained in $Y$ in $J$.

If $w\notin Y\cup Q$, the boundary and density of $Y$ are unchanged in $G$, contradicting $4$-lightness. If $w\in Y$, it is a nonroot and both $u,v$ are nonroots. Theorem~\ref{thm:inputs}(2) supplies a boundary clique or dart model in $R_Y^J$. Expanding $w$ lifts this model to
\[
Y'=(Y\setminus\{w\})\cup\{u,v\}.
\]
Its boundary is $Q$, its density is nonpositive by $4$-lightness of $G$, and $|Y'|=|Y|+1\ge2$. Thus it is reducible, contrary to Lemma~\ref{lem:irreducibleroot}. We conclude that $w\in Q$.

The set $Y$ now survives unchanged when $w$ is expanded. Its boundary has at most five vertices, and its density cannot decrease. If its boundary had at most four vertices, it would violate $4$-lightness of $G$. Hence
\[
|Q|=4,\qquad \dd_GY=(Q\setminus\{w\})\cup\{u,v\}.
\]
In particular, both $u$ and $v$ have neighbours in $Y$. The separation
\[
\bigl(V(G)\setminus Y,\;Y\cup\dd_GY\bigr)
\]
is a proper root 5-separation because its left side contains every original root and also at least one nonroot among $u,v$. Let
\[
c=|N_G(u)\cap N_G(v)\cap Y|.
\]
Exactly $c$ pairs of edges from $u,v$ to $Y$ merge under contraction, so
\[
\rho_4(G,Y)=\rho_4(J,Y)+c.
\]
Its right-hand density cannot be at least two by Lemma~\ref{lem:irreducibleroot}. Since $\rho_4(J,Y)$ is a positive integer, this gives
\begin{equation}\label{eq:split}
\rho_4(G,Y)=\rho_4(J,Y)=1,\qquad c=0.
\end{equation}

At this point $R_Y^J$ has exactly four roots, namely $Q$. It is $4$-light by the minimal choice of $Y$, and its rooted density is $\rho_4(J,Y)=1>0$. Thus Lemma~\ref{lem:four} applies, and Lemma~\ref{lem:normal} gives its boundary normal form. Obtain adjacent nonempty connected sets $A,B\subseteq Y$, and put $U=A\cup B$. Their union has exactly four external neighbours in $R_Y^J$, each adjacent to both $A$ and $B$. This is also the entire boundary in $J$. A vertex outside $Y\cup Q$ cannot have a neighbour in $U\subseteq Y$.

If $w$ is not one of those four neighbours, expanding it changes none of these adjacencies. If $w$ is one of them but only one of $u,v$ meets $U$, that endpoint inherits all its adjacencies to $A,B$. In either case $U$ has boundary four in $G$ and a rooted $D^*$ model with singleton boundary roots. To obtain a dart from such a model, absorb one nonroot branch set into one boundary root, retain the other as the dart nonroot, and keep two edges of the resulting 3-edge root star. Because $|U|\ge2$ and $\rho_4(G,U)\le0$, this makes $U$ reducible, again impossible.

It follows that
\[
\dd_GU=\{u,v,d_1,d_2,d_3\}.
\]
Each of $A,B$ meets all three $d_i$ and at least one of $u,v$, while both $u$ and $v$ meet $U$. If either branch set meets both endpoints, $A,B$ and the five singleton boundary roots already form a $\W^+$ model. A linkage of size five transfers this model to $S$. If there is no such linkage, Lemma~\ref{lem:nonfull}, applied to the root separation with right side $G[U\cup\dd_GU]$, produces a forbidden reducible fragment. Thus this case is excluded.

In the remaining case, after interchanging $A,B$ and $u,v$ if necessary, $A$ meets $u$ but not $v$, and $B$ meets $v$ but not $u$. Choose the labels so that $v$ is an original nonroot. At least one endpoint of $uv$ is a nonroot. These branch sets form a weak $\W$ model on the boundary. Lemma~\ref{lem:nonfull} and irreducibility now force a linkage of size five from $S$ to $U\cup\dd_GU$.

Truncate each linkage path at its first vertex in that closed neighbourhood. Each path first enters the closed neighbourhood through its boundary, because a vertex outside it has no neighbour in $U$. There are five disjoint paths and exactly five boundary vertices, so their terminators are precisely those five vertices. Write $P_t$ for the path ending at $t\in\dd_GU$, and $s_t$ for its original root. The paths avoid $U$, meet the boundary only at their terminators, and meet $S$ only at their initial vertices. Since $v\notin S$, $P_v$ has positive length.

Define the five root branch sets by
\[
R_v=V(P_v)\setminus\{v\},\qquad
R_t=V(P_t)\quad(t\in\{u,d_1,d_2,d_3\}),
\]
and the two nonroot branch sets by $A$ and $B'=B\cup\{v\}$. Because $P_v$ has positive length, deleting its last vertex leaves a nonempty connected path $R_v$ containing $s_v$. No original root is moved into a nonroot branch set. The set $B'$ is connected because $v$ has a neighbour in $B$. All seven sets are pairwise disjoint. Indeed, the paths are disjoint, avoid $A\cup B$, and $v$ has been removed from its root branch set before being added to $B$. Each $R_t$ contains exactly its prescribed original root, and $A,B'$ contain none.

The construction is illustrated in Figure~\ref{fig:split}.
\begin{figure}[htbp]
\centering
\resizebox{\textwidth}{!}{%
\begin{tikzpicture}[x=1cm,y=1cm,line width=0.5pt,font=\small,every label/.style={fill=white,inner sep=1pt}]
\path[use as bounding box] (-0.7,-1.0) rectangle (13.0,3.35);
\foreach \i/\t in {0/u,1/v,2/d_1,3/d_2,4/d_3} {
 \coordinate (t\i) at ({0.95*\i},1.2);
 \coordinate (s\i) at ({0.95*\i},2.7);
 \draw[densely dashed] (s\i)--(t\i);
}
\coordinate (A) at (-0.25,0); \coordinate (B) at (4.05,0);
\draw (A)--(B);
\foreach \i in {0,2,3,4} {\draw (A)--(t\i);}
\foreach \i in {1,2,3,4} {\draw (B)--(t\i);}
\draw (t0)--(t1);
\draw[line width=1pt] (0.95,1.2)--(0.95,1.68);
\node[circle,fill=black,inner sep=0pt,minimum size=3pt] at (0.95,1.68) {};
\foreach \i/\t in {0/u,1/v,2/d_1,3/d_2,4/d_3} {
 \node[draw,circle,fill=white,inner sep=0pt,minimum size=4pt,label=below:$\t$] at (t\i) {};
 \node[draw,circle,fill=white,inner sep=0pt,minimum size=4pt,label=above:$s_{\t}$] at (s\i) {};
}
\node[draw,ellipse,fill=white,minimum width=0.55cm,minimum height=0.35cm] at (A) {$A$};
\node[draw,ellipse,fill=white,minimum width=0.55cm,minimum height=0.35cm] at (B) {$B$};
\node at (1.9,-0.7) {(a) The linkage and local sets};
\draw[->,line width=0.7pt] (4.9,1.5)--(5.9,1.5);
\begin{scope}[xshift=7.2cm]
\foreach \i in {0,1,2,3,4} {\coordinate (r\i) at ({0.95*\i},2.7);}
\coordinate (AA) at (0.9,0.7); \coordinate (BB) at (3.0,0.7);
\draw (AA)--(BB);
\foreach \i in {0,2,3,4} {\draw (AA)--(r\i);}
\foreach \i in {0,1,2,3,4} {\draw (BB)--(r\i);}
\draw[densely dashed,gray] (AA)--(r1);
\foreach \i/\t in {0/u,1/v,2/d_1,3/d_2,4/d_3} {
 \node[draw,circle,fill=white,inner sep=0pt,minimum size=4pt,label=above:$R_{\t}$] at (r\i) {};
}
\node[draw,ellipse,fill=white,minimum width=0.55cm,minimum height=0.35cm] at (AA) {$A$};
\node[draw,ellipse,fill=white,minimum width=0.7cm,minimum height=0.35cm] at (BB) {$B'$};
\node at (1.9,-0.7) {(b) The seven branch sets};
\end{scope}
\end{tikzpicture}%
}
\caption{The split case in Theorem~\ref{thm:five}. In (a), dashed vertical segments represent disjoint linkage paths. The marked last edge of $P_v$ joins $R_v$ to the vertex moved into $B'=B\cup\{v\}$. The paths may have different lengths, and paths other than $P_v$ may have length zero. In (b), the dashed segment is the sole adjacency that may be absent. Every root set still contains its original root $s_t$. Diagram code prepared with OpenAI Codex (GPT-6).}
\label{fig:split}
\end{figure}

\needroom{12\baselineskip}
The following table gives witnesses for the required adjacencies.
\begin{center}
\begin{tabular}{ll}
\toprule
Pair of branch sets&Witness\\
\midrule
$A,B'$&an edge between $A$ and $B$\\
$A,R_u$&an edge from $A$ to $u$\\
$A,R_{d_i}$&an edge from $A$ to $d_i$, for $i=1,2,3$\\
$B',R_u$&the edge $vu$\\
$B',R_v$&the last edge of $P_v$\\
$B',R_{d_i}$&an edge from $B$ to $d_i$, for $i=1,2,3$\\
\bottomrule
\end{tabular}
\end{center}
The table accounts for the edge between the two nonroot branch sets, four root adjacencies of $A$, and all five root adjacencies of $B'$. These are ten distinct required adjacencies out of eleven. Only the edge between $A$ and $R_v$ may be absent. After deleting unwanted root edges, the seven sets give a rooted $\W^+$ model in $G$. This contradicts the choice of $G$ and proves the theorem.
\end{proof}
\begin{remark}
The density threshold two is best possible for Theorem~\ref{thm:five}. A vampire on seven vertices with exactly two independent missing root--nonroot edges is internally 5-connected as a rooted graph and has density one. It cannot have a $\W^+$ minor. Indeed, such a minor has the same seven vertices and would require at least ten nonroot-incident edges, whereas this vampire has nine.
\end{remark}

\needroom{10\baselineskip}
\subsection{Separations of order five}\label{subsec:joining}
\begin{lemma}\label{lem:joining}
Let $(A,B)$ be a 5-separation of a graph, and let $m$ be the number of missing edges on $S=A\cap B$. Suppose that both rooted sides are $4$-light, have densities $r_1,r_2\ge2$, and satisfy $r_1+r_2\ge m+1$. Then the graph contains $\Ke$ as a minor.
\end{lemma}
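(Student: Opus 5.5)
Apply Theorem~\ref{thm:five} to both rooted sides $R_1=(G[A],S)$ and $R_2=(G[B],S)$. Each is $4$-light with density at least two, so each has a rooted minor in $\W^+$. Here $p_1,q_1$ denote the two nonroot branch sets of the model in $B\setminus A$, and $p_2,q_2$ those in $A\setminus B$; all of them are adjacent. The root branch sets on the two sides both contain the same separator vertex $s\in S$. Their union, over both sides, is connected through $s$, and distinct roots give disjoint unions. So we obtain a minor of $G$ on the nine vertices $S\cup\{p_1,q_1,p_2,q_2\}$. In this minor $p_iq_i$ is an edge, each nonroot misses at most one root in total per side, and the root pairs keep exactly the edges of $G[S]$, so $m$ of them are missing. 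From this nine-vertex structure we want $\Ke$, by contracting one side's nonroot pair into a root, or by absorbing nonroots into roots to supply missing root edges.

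The basic move is as follows. Contract $p_2$ into some root $x$ and $q_2$ into some root $y$. In the side-$2$ template $p_2$ is adjacent to all roots except possibly one, and likewise $q_2$. So the enlarged $x$ becomes adjacent to every root that $p_2$ sees, and similarly for $y$. We then keep $p_1,q_1$ and the five roots as the seven branch sets of a $K_7$-type target. Among these, $p_1q_1$ is present and side $1$ loses at most one root--nonroot edge. The root pairs incident with $x$ or $y$ are repaired, as is the pair $xy$ through the edge $p_2q_2$. The count of remaining missing pairs must be at most two, and any two must be independent. Alternatively, if $m$ is small, we may instead keep one nonroot from each side. The inequality $r_1+r_2\ge m+1$ must be used to obtain extra slack. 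When $m$ is large, the two sides are denser than the minimum two. I would then apply Theorem~\ref{thm:inputs}(1) with $\T_{r_i}$-universality to realize prescribed root graphs on one side: for $r\ge4$, $\Sc_{5,6}$ and beyond allow us to add up to six or more root edges as a rooted minor. This fills the missing separator edges, while the other side supplies the $\W^+$ template.

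So the case split is by $m$. If $m\le 2$, with missing edges independent or repairable, we use the two $\W^+$ models directly and the contraction trick. Since $r_1+r_2\ge m+1$ is automatic here, the case is purely combinatorial: choose $x,y$ to be the endpoints of missing separator edges, or the root missed by the side-$1$ nonroot, so that the leftover defects are one independent pair. For larger $m$, we split the density budget. Side $2$ needs density at least $m'$ so that $\T_{r_2}$-universality supplies a rooted graph on $S$ covering all but a few missing pairs. Keep $r_1\ge2$ to get $\W^+$ on side $1$. Then one missing root--nonroot edge plus at most one remaining independent root non-edge gives $\Ke$. We check the table: $r_2=2$ gives $3$ edges, $r_2=3$ gives $4$, $r_2=4$ gives $6$, and so on. The quantity $10-m$ present edges plus the edges gained must reach $9$ or so, while choosing the bijection so the leftover non-edge is independent of the $\W^+$ defect.

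The main obstacle will be the bookkeeping of independence. $\Ke$ requires the two missing edges to be disjoint. The single $\W^+$ defect at a nonroot, joined with one missing root pair, must avoid a shared root, and the bijection freedom in $\T_r$-universality and the choice of which root absorbs nonroots must guarantee this. The tight cases are those where $r_1+r_2=m+1$ with the $\Sc^-_{5,4}$ exclusion at $r=3$, and I would handle these first by explicit choice of the rooted graph from $\T_r$.
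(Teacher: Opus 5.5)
The route you sketch for larger $m$ is the paper's proof in outline. It applies Theorem~\ref{thm:five} on one side and $\T_r$-universality (Theorem~\ref{thm:inputs}(1)) on the other, to supply all but at most one missing separator edge $f$, with $f$ chosen independent of the single $\W^+$ defect. However, the proposal stops exactly where the proof's content lies. You name the independence bookkeeping and the $\Sc^-_{5,4}$ rows as the main obstacle and then defer them, and ``reach $9$ or so'' is not a check. As written this is a plan, not a proof.

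The contraction trick with two $\W^+$ models is a detour. It is unnecessary, because for $m\le3$ the larger side already supplies all missing edges through $\T_2=\Sc_{5,3}$. It also cannot handle large $m$: for $m=10$, absorbing $p_2,q_2$ into roots $x,y$ leaves the triangle on the other three roots missing. Note also that $p_iq_i$ need not be an edge, since $\W^+$ contains the template with $pq$ missing and all ten root--nonroot edges present. This is harmless, because a missing $pq$ is independent of every root edge, but your description should allow it.

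Here is what has to be supplied.
\begin{enumerate}
\item The densities are given, so you cannot ``split the budget''. Apply Theorem~\ref{thm:five} to the side of smaller density. Then $r_1+r_2\ge m+1$ gives the other side density at least $s=\max\{2,\lceil(m+1)/2\rceil\}$, and the classes $\T_r$ are nested.
\item Let $F$ be the graph of the $m$ missing separator edges. Check the rows:
\begin{enumerate}
\item $\T_s$ contains $F$ itself for $m\in\{0,1,2,3,6,8\}$.
\item It contains $F-f$ for every $f$ when $m\in\{4,7,9,10\}$.
\item For $m=5$ it contains $F-f$ whenever $F-f\not\cong K_3\sqcup K_2$.
\end{enumerate}
For $m=5,7,9,10$ the capacity is exactly $m-1$, so one root pair must stay missing.
\item Choose $f$ to be independent of the $\W^+$ defect. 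If the defect is $pq$, any $f$ works. If it is $px$ for a root $x$, choose $f$ avoiding $x$:
\begin{enumerate}
\item For $m\ge5$ this is possible because $d_F(x)\le4<m$.
\item For $m=4$, either such an $f$ exists, leaving three edges, or $F=K_{1,4}$, which itself lies in $\Sc^-_{5,4}$.
\item For $m=5$, if only one edge avoids $x$, then $F-f$ is a $4$-edge star. Otherwise at most one deletion produces $K_3\sqcup K_2$, namely when $F$ is $K_3\sqcup K_2$ plus an edge joining its components and $f$ is that edge. So pick a different edge avoiding $x$.
\end{enumerate}
\end{enumerate}
Gluing corresponding root branch sets through their common separator vertex then leaves at most two independent missing edges, giving $\Ke$.
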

\begin{proof}
Use Theorem~\ref{thm:five} on the side of smaller density. The other side has density at least $s=\max\{2,\lceil(m+1)/2\rceil\}$. Write $F$ for the graph of missing edges on the five separator roots. The classes $\T_r$ are nested, so Theorem~\ref{thm:inputs}(1) supplies any target in $\T_s$. The following table records the relevant targets.
\begin{center}
\begin{tabular}{cccl}
\toprule
$m$&$s$&$\T_s$&target to be supplied\\
\midrule
$0$--$3$&2&$\Sc_{5,3}$&$F$\\
4&3&$\Sc^-_{5,4}$&$F-f$, or $F$ if it is a star\\
5&3&$\Sc^-_{5,4}$&$F-f\not\cong K_3\sqcup K_2$\\
6&4&$\Sc_{5,6}$&$F$\\
7&4&$\Sc_{5,6}$&$F-f$\\
8&5&$\Sc_{5,8}$&$F$\\
9&5&$\Sc_{5,8}$&$F-f$\\
10&6&$\Sc_{5,9}$&$F-f$\\
\bottomrule
\end{tabular}
\end{center}
Thus at most one root edge $f$ need remain absent. If the $\W^+$ model has no defect, any such $f$ is harmless. If its sole defect is the edge between its two nonroots, then $f$ is automatically independent of that defect. It remains to consider a missing root--nonroot edge $px$, where $x$ is a separator root. We choose $f$ to avoid $x$.

For $m=4$, if an edge $f$ of $F$ avoids $x$, then $F-f$ has three edges and belongs to $\Sc^-_{5,4}$. If no such edge exists, all four edges are incident with $x$, so $F=K_{1,4}$ and the whole of $F$ is an allowed target. For $m=5$, some edge avoids $x$ because $d_F(x)\le4$. If exactly one does, deleting it leaves a 4-edge star. If at least two do, at most one choice $f$ can leave $K_3\sqcup K_2$. In that event $F$ is this disjoint union plus an edge joining its components, and only that joining edge can be deleted to recover the disjoint union. Choose another edge avoiding $x$. In all cases the chosen target lies in $\Sc^-_{5,4}$.

For $m=0,1,2,3,6,8$, the target class contains $F$ itself, so no root edge is left missing. For $m=7$, deleting any edge leaves a 6-edge target in $\Sc_{5,6}$. For $m=9$ and $10$, deletion leaves respectively an 8-edge target in $\Sc_{5,8}$ and a 9-edge target in $\Sc_{5,9}$. In these last three rows, $m>4\ge d_F(x)$ guarantees an edge $f$ avoiding $x$.

Finally compose the two rooted models, identifying corresponding root branch sets through their common separator vertex. Each union is connected. Different root sets remain disjoint, and the two nonroot branch sets lie only on the first side. Retain all existing root edges. The seven resulting branch sets miss at most two independent edges, and hence give a $\Ke$ model.
\end{proof}

\section{Irreducible graphs}\label{sec:irreducible}
Fix $b\in\{3,4\}$. Call an unrooted graph \emph{$b$-irreducible} if it has no reducible $(\le4)$-fragment $Y$ with $|V(G)\setminus Y|\ge b$. Throughout this section, $G$ is a $4$-bilight, $\Ke$-minor-free, $b$-irreducible graph. We adapt the separation and contraction arguments of~\cite{DNR26}, checking that each reduction leaves at least $b$ original vertices. The corresponding results are cited in the individual proofs.
\needroom{6\baselineskip}
\begin{lemma}\label{lem:enclosing}
Let $(C,D)$ be a separation of order at most four such that $R_{C,D}$ is $4$-light and has nonpositive density. Suppose that
\[
|(C\setminus D)\cup\dd_G(C\setminus D)|\ge b.
\]
There is no 5-separation $(A,B)$ with $B\subseteq D$ for which $R_{A,B}$ is both $4$-light and quite heavy.
\end{lemma}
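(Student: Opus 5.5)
We argue by contradiction: suppose a 5-separation $(A,B)$ with $B\subseteq D$ has $R_{A,B}$ both $4$-light and quite heavy. The strategy is to manufacture a reducible fragment of $G$ whose complement contains at least $b$ vertices, which $b$-irreducibility forbids. Throughout we view the left side of $(C,D)$ as the root side. Let $q=|C\cap D|\le4$ and take $C\cap D$ as the root set of $R_{C,D}$. The pair $(A\cap D,B)$ is then a root separation of $R_{C,D}$ of order at most five.

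First, Theorem~\ref{thm:inputs}(3) applied to $R_{A,B}$ gives a rooted model in $\W$ on $A\cap B$ whose two nonroot branch sets lie in $B\setminus A$. Next, take an isolator of this root separation inside $R_{C,D}$, that is, a maximum linkage from $C\cap D$ to $B$. It has $q'\le q\le4<5$ paths, so we are in the non-full-linkage setting of Lemma~\ref{lem:nonfull}. The hypotheses of that lemma hold because $R_{C,D}$ is $4$-light with $k=q\le4$ roots and $\rho_4(R_{C,D})\le0$. The lemma therefore yields a reducible fragment $F$ of $R_{C,D}$ containing $B\setminus A$. Concretely, $F=D'\setminus C'$ for an isolator $(C',D')$, and the model on its boundary is obtained by restricting the $\W$ template to $q'$ terminators, giving a clique or a dart as in \cite[Observation~4.1]{DNR26}.

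We then check that $F$ is a reducible fragment of $G$ itself. The set $F$ avoids $C\cap D$, so $\dd_GF=\dd_{R_{C,D}}F$. Its density and its boundary clique or dart model are likewise unchanged, and $|F|\ge2$ because $F$ contains the two nonroot branch sets. The only remaining requirement for unrooted reducibility is $|V(G)\setminus F|\ge b$. Since $F\subseteq D\setminus C$, the complement of $F$ contains $C\setminus D$ together with its boundary $\dd_G(C\setminus D)\subseteq C\cap D$. The hypothesis states exactly that this set has at least $b$ vertices. Hence $F$ contradicts $b$-irreducibility of $G$.

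The main obstacle is verifying that Lemma~\ref{lem:nonfull} applies verbatim in this setting. Two points need care. First, the roots of $R_{C,D}$ may be adjacent, or there may be fewer of them than four, so the dart case must be handled when $q'=4$. Second, a terminator may coincide with a root of $C\cap D$ through a zero-length path. In that event the transferred model must still produce a reducible fragment whose actual boundary equals the separator, or else restrict to a clique. If $q<|\dd|$ issues arise, we fall back on the restriction argument at the end of the proof of Lemma~\ref{lem:nonfull}: a clique restricts to a smaller boundary, and a dart forces all four roots to have neighbours in the interior.
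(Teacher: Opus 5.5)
Your overall route is the paper's: a weak template in $R_{A,B}$, a linkage from at most four roots that cannot be full, Lemma~\ref{lem:nonfull}, and then the complement-size check. However, the step ``the pair $(A\cap D,B)$ is then a root separation of $R_{C,D}$'' is unjustified and can fail. A root separation needs every root on the left side, that is, $C\cap D\subseteq A$. A vertex $x\in C\cap D$ with a neighbour in $C\setminus D$ does lie in $A$: that neighbour is outside $D\supseteq B$, hence in $A\setminus B$, which forces $x\in A$. But nothing prevents a separator vertex with no neighbour in $C\setminus D$ from lying in $B\setminus A$. In that case $(A\cap D,B)$ has no isolator, because a root separation $(C'',D'')$ with $C''\subseteq A\cap D$ would have to contain that root. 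So Lemma~\ref{lem:nonfull} does not apply. Worse, the nonroot branch sets of the $\W$ template may contain that root, so the set you extract need not be a fragment of $R_{C,D}$ at all.

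The missing idea is to first cancel such separator vertices as roots. Set $C'=(C\setminus D)\cup\dd_G(C\setminus D)$ and work in $H=R_{C',D}$. This is $G[D]$ with the vertices of $(C\cap D)\setminus C'$ turned into nonroots, and $(C',D)$ is still a separation because those vertices have no neighbour in $C\setminus D$. Lemma~\ref{lem:cancel} keeps $H$ $4$-light with nonpositive density. Every remaining root now has a neighbour in $C\setminus D\subseteq A\setminus B$ and therefore lies in $A$. Hence $(A\cap D,B)$ is a root $5$-separation of $H$, and your argument goes through.

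The resulting reducible fragment may now contain cancelled vertices, so it lies in $D\setminus C'$ rather than $D\setminus C$. Its boundary and density in $G$ are still unchanged, because cancelled vertices have no neighbour in $C\setminus D$. Its complement contains $C'$, and this is exactly why the hypothesis is stated for the closed interior $(C\setminus D)\cup\dd_G(C\setminus D)$ rather than for $C$. The issues you list as the ``main obstacle'' are already handled inside the proof of Lemma~\ref{lem:nonfull}; the check that was actually missing is this placement of the roots.
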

\begin{proof}
This is the enclosing argument of~\cite[Corollary~5.3]{DNR26}, with $b$-irreducibility in place of minimal-counterexample irreducibility. Its explicit closed-interior hypothesis ensures that the forbidden reduction leaves at least $b$ vertices.

Remove from $C$ the separator vertices with no neighbour in $C\setminus D$, and call the resulting set $C'$. Thus $C'=(C\setminus D)\cup\dd_G(C\setminus D)$. In $H=R_{C',D}$, Lemma~\ref{lem:cancel} gives nonpositive density and $4$-lightness. Each root of $H$ has a neighbour in $C\setminus D$. As $B\subseteq D$, this implies that every root of $H$ belongs to $A$. Consequently $(A\cap D,B)$ is a root 5-separation of $H$. Theorem~\ref{thm:inputs}(3) gives a weak template in its right-hand side. There are at most four original roots in $H$, so Lemma~\ref{lem:nonfull} supplies a reducible fragment $Y$ of $H$. Its boundary and density are unchanged in $G$, since it avoids the roots of $H$ and has no neighbour in $C\setminus D$. Also $|V(G)\setminus Y|\ge|C'|\ge b$. This contradicts $b$-irreducibility.
\end{proof}
\needroom{6\baselineskip}
\begin{lemma}\label{lem:consistency}
\begin{enumerate}
\item If $(A,B)$ is a 5-separation and $R_{A,B}$ is quite heavy, then $R_{B,A}$ is $4$-light.
\item If $(C,D)$ has order at most four and $Y$ is a quite heavy 5-fragment with $Y\cup\dd Y\subseteq C$, then $|C\setminus D|\ge3$, $\rho_4(R_{C,D})\le0$, and $R_{C,D}$ is $4$-light.
\end{enumerate}
\end{lemma}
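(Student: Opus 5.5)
The plan is to prove (1) by playing $4$-bilightness against the rooted-minor inputs, and then to derive (2) from (1) together with a counting argument.

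\emph{Plan for (1).} Put $S=A\cap B$ and argue by contradiction. Suppose $R_{B,A}$ is not $4$-light, and choose an inclusionwise minimal positive $(\le4)$-fragment $Z\subseteq A\setminus B$. Since $Z\subseteq A\setminus B$, it is nonadjacent to $B\setminus A$.

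First, $R_{A,B}$ must be $4$-light. Otherwise it has a positive $(\le4)$-fragment $Y'\subseteq B\setminus A$. Boundaries and densities are the same in $G$, so $(Y',Z)$ would be a dense $(\le4)$-bifragment of $G$, contradicting $4$-bilightness.

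Hence $R_{A,B}$ is $4$-light and quite heavy. Theorem~\ref{thm:inputs}(3) then gives a rooted model in $\W$ on $S$. On the other side, $R^G_Z$ is $4$-light by the minimality of $Z$ and has positive density. So Theorem~\ref{thm:inputs}(2), and Lemma~\ref{lem:four} when $|\dd Z|=4$, give a boundary clique, a dart, or a $D^*$ on $\dd Z$.

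I would then take an isolator between $S$ and $Z\cup\dd Z$ inside $A$.
\begin{itemize}
\item If the linkage from $S$ to $Z\cup\dd Z$ is large enough, join the two models into a $\Ke$ model. The two nonroots of the $\W$ template and the contracted structure on the $Z$ side supply the seven branch sets.
\item If the linkage is too small, transfer a restricted weak template to the isolator, exactly as in Lemma~\ref{lem:nonfull}. This yields a reducible fragment containing $B\setminus A$. Its complement contains $Z\cup\dd Z$, which has at least six vertices by the computation at the end of Lemma~\ref{lem:stability}. This contradicts $b$-irreducibility.
\end{itemize}
I expect this combination step to be the main obstacle. One must check that the $\W$ template and the $Z$-side model fit together without creating two adjacent defects, and I would organise it by the isolator order, as in Lemma~\ref{lem:nonfull}.

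\emph{Plan for (2).} Apply (1) to the 5-separation $(V(G)\setminus Y,\;Y\cup\dd Y)$, whose right-hand side is the quite heavy graph $R_Y$. Part (1) says that $R_{V(G)\setminus Y,\,Y\cup\dd Y}$ is $4$-light, so $G$ has no positive $(\le4)$-fragment inside $V(G)\setminus(Y\cup\dd Y)$.

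Every fragment of $R_{C,D}$ lies in $D\setminus C$, which avoids $C\supseteq Y\cup\dd Y$. Its boundary in $R_{C,D}$ equals its boundary in $G$, and this boundary lies in $D$. Therefore every $(\le3)$-fragment of $R_{C,D}$ is nonpositive, so $R_{C,D}$ is $4$-light. The whole interior $D\setminus C$ is also a fragment of $G$ with boundary at most four, so $\rho_4(R_{C,D})=\rho_4(G,D\setminus C)\le0$.

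For the size bound, first note that $|Y|\ge2$. A single vertex with five boundary neighbours has density at most one. Density exactly one forces that vertex to be adjacent to all five roots, which the definition of quite heavy excludes. Next, $Y\subseteq C\setminus D$, because $Y\cup\dd Y\subseteq C$ and $Y$ is not separated from $\dd Y$. Also at least one vertex of $\dd Y$ lies outside $C\cap D$, since $|\dd Y|=5>4$. That vertex lies in $C\setminus D$, so $|C\setminus D|\ge|Y|+1\ge3$.
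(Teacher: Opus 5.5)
\textbf{Part (1).} As written, your part (1) is incomplete. It leaves open the ``combination'' branch, which you flag as the main obstacle, but that branch cannot occur. Since $Z\subseteq A\setminus B$ is disjoint from $S$, every path from $S$ to $Z\cup\dd Z$ first meets that set at a vertex of $\dd Z$. So any linkage has at most $|\dd Z|\le4<5$ paths and is never full.

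You should see this rather than attempt the combination, because the combination you sketch would fail with four paths. A $\W$ template on $S$ needs its five root branch sets to be almost pairwise adjacent. The $Z$ side reaches at most four of them, so nothing supplies the adjacencies of the fifth root. Consequently the $Z$-side models from Theorem~\ref{thm:inputs}(2) and Lemma~\ref{lem:four}, and the minimal choice of $Z$, are not needed.

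What remains is your second bullet, and it is exactly the paper's proof. The paper applies Lemma~\ref{lem:enclosing} to the separation $(Z\cup\dd Z,\,V(G)\setminus Z)$:
\begin{itemize}
\item its right side is $4$-light and has nonpositive density by $4$-bilightness against the positive fragment $Z$;
\item its closed interior $Z\cup\dd Z$ has at least six vertices;
\item $B$ lies on its right side.
\end{itemize}
If you carry this out by hand, you must say why the fragment beyond the isolator containing $B\setminus A$ has nonpositive density. The reason is that it has boundary at most four and is nonadjacent to $Z$, so $4$-bilightness applies. This is the only source of that sign; ``exactly as in Lemma~\ref{lem:nonfull}'' does not provide it without first setting up the ambient rooted graph.

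\textbf{Part (2).} Your argument is correct in substance and somewhat more direct than the paper's. The paper first discards the separator vertices with no neighbour in $D\setminus C$, to obtain a root separation of the opposite side of $Y$, and then proves $4$-lightness separately via $(I\cup C,J)$. You observe instead that every fragment of $R_{C,D}$ lies in $D\setminus C\subseteq V(G)\setminus(Y\cup\dd Y)$ with the same boundary as in $G$, which gives both conclusions at once. Note a notational slip: the side made $4$-light by (1) is $R_{Y\cup\dd Y,\,V(G)\setminus Y}$, whereas $R_{V(G)\setminus Y,\,Y\cup\dd Y}$ is $R_Y$ itself.

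One step is false. The claim $Y\subseteq C\setminus D$ need not hold, since a vertex of $Y$ may lie in $C\cap D$ when all its neighbours are in $C$. That is precisely why the paper trims such vertices. The size bound should instead be obtained by counting: $|C\setminus D|\ge|Y\cup\dd Y|-|C\cap D|\ge|Y|+5-4\ge3$.
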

\begin{proof}
The conclusions are those of~\cite[Lemma~5.4 and Corollary~5.5]{DNR26}. Their use of irreducibility is replaced here by Lemma~\ref{lem:enclosing}. The complement in that application has at least six vertices, so either value of $b$ is allowed.

For (1), suppose the opposite side contains a positive $(\le4)$-fragment $L\subseteq A\setminus B$. Then $R_{A,B}$ is $4$-light, since a violating fragment in $B\setminus A$ would form a dense small bifragment with $L$. The complete opposite side of $L$, rooted on $\dd L$, has nonpositive density and is $4$-light for the same reason. Its other side has closed interior $L\cup\dd L$ of size at least six. Since $B$ is contained in this opposite side, Lemma~\ref{lem:enclosing} is contradicted.

For (2), quite heaviness implies $|Y|\ge2$, and hence $|C\setminus D|\ge|Y|+5-4\ge3$. Let $M\subseteq C\cap D$ consist of vertices with no neighbour in $D\setminus C$, and put $D'=D\setminus M$. This does not change the density of the right-hand side. Every vertex of $C\cap D'$ has a neighbour outside $C$, and hence cannot belong to $Y$. Thus $Y\subseteq C\setminus D'$. The complete opposite side of $Y$ is $4$-light by (1), and $(C\setminus Y,D')$ is a root separation of it of order at most four. Its density is therefore nonpositive, proving $\rho_4(R_{C,D})\le0$.

Finally, if $(I,J)$ is a small root separation of $R_{C,D}$, then $(I\cup C,J)$ is a separation of $G$ of order at most four with $Y\cup\dd Y\subseteq I\cup C$. The density conclusion just proved applies to it. This shows that $R_{C,D}$ is $4$-light.
\end{proof}
\needroom{6\baselineskip}
\begin{lemma}\label{lem:orienting}
Suppose that $\rho_4(G)=-d$, where $d\in\{8,9\}$, and additionally that $G$ contains no $K_5$ subgraph when $d=9$. Every 5-separation has exactly one quite heavy side. If such a side is $4$-light, it has a rooted $K_5$ minor.
\end{lemma}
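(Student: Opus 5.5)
Let $(A,B)$ be a 5-separation with $S=A\cap B$, and write $m$ for the number of missing edges on $S$. The plan is a density count together with Lemma~\ref{lem:joining} and Lemma~\ref{lem:consistency}.

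\emph{Density identity.} Put $r_1=\rho_4(R_{A,B})$ and $r_2=\rho_4(R_{B,A})$, where $R_{B,A}=(G[A],S)$. Count edges: each edge outside $G[S]$ lies on exactly one side, and $e(G[S])=10-m$. This gives
\[
r_1+r_2=e(G)-(10-m)-4(|V(G)|-5)=\rho_4(G)+10+m=m+10-d .
\]
So $r_1+r_2=m+2$ when $d=8$, and $r_1+r_2=m+1$ when $d=9$.

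\emph{At most one quite heavy side.} Suppose both sides are quite heavy. By Lemma~\ref{lem:consistency}(1), each side is $4$-light.
\begin{itemize}
\item If both densities are at least two, Lemma~\ref{lem:joining} gives a $\Ke$ minor, since $r_1+r_2\ge m+1$.
\item Otherwise some side has density one, with no nonroot adjacent to all of $S$. The other side is still quite heavy, so Theorem~\ref{thm:inputs}(3) gives it a $\W$ model. The density-one side still has $\T_1$-universality from Theorem~\ref{thm:inputs}(1).
\end{itemize}
In the second case I expect to finish with a tighter version of the joining argument. Here $r_1+r_2\ge m+1$ forces the other side to have density at least $m$, so Theorem~\ref{thm:inputs}(1) supplies $F$ or $F-f$ for a well-chosen $f$ independent of the defects of the $\W$ model. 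The $\W$ model can miss two independent edges, so $F$ must be supplied fully. I expect this to be the main obstacle, because a $K_{2,\downarrow5}$ template already uses its one allowed defect on the $pq$ edge. When $m$ is small this may need the density-one side's structure, namely that no nonroot is adjacent to all of $S$, to realise $F$ completely.

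\emph{At least one quite heavy side.} If neither side is quite heavy, then each $r_i\le1$, so $m+10-d\le2$. This gives $m\le0$ for $d=8$ and $m\le1$ for $d=9$. Then $S$ spans a $K_5$, or a $K_5$ minus an edge.
\begin{itemize}
\item For $d=8$, $S$ is a $K_5$. Take a component on each side and contract it. Each component is adjacent to at least four vertices of $S$, since otherwise a small separation arises, which I handle via $4$-bilightness and irreducibility. This yields $\Ke$.
\item For $d=9$, the hypothesis excludes $K_5$, so $m=1$ and $r_1=r_2=1$. If a side has a nonroot adjacent to all five roots, it is not quite heavy only because of that vertex. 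Contracting components on each side, using the vertex meeting all five roots to cover the missing edge, again gives seven branch sets missing at most two independent edges.
\end{itemize}
This produces $\Ke$ in both cases, so exactly one side is quite heavy.

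\emph{Rooted $K_5$.} Let $R_{A,B}$ be the $4$-light quite heavy side. The opposite side is not quite heavy, so $r_2\le1$ and $r_1\ge m+9-d\ge m$ (using $d\le9$). Theorem~\ref{thm:inputs}(1) makes $R_{A,B}$ $\T_{r_1}$-universal. Check against the table:
\begin{itemize}
\item for $m\ge6$, $r_1\ge m-1$ suffices;
\item for small $m$, $\T_{\max(2,m)}$ contains the required graph.
\end{itemize}
Since $F$ has $m$ edges, $R_{A,B}$ supplies the missing-edge graph $F$. Together with the root edges already present on $S$, this gives a rooted $K_5$ minor.
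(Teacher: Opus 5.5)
\textbf{The gap: both sides quite heavy, one of density one.} In this case you apply Theorem~\ref{thm:inputs}(3) to the \emph{other} side. You then also want Theorem~\ref{thm:inputs}(1) on that same side to supply $F$. Two rooted minors of the same side cannot be combined, and you leave the case open as ``the main obstacle''.

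The missing idea is to swap the roles of the two sides:
\begin{itemize}
\item The density-one side is quite heavy precisely because no nonroot sees all of $S$. It is $4$-light by Lemma~\ref{lem:consistency}(1), so Theorem~\ref{thm:inputs}(3) gives the $\W$ model \emph{there}.
\item The other side has density $m+9-d\ge m$. Since $\T_r\supseteq\Sc_{5,m}$ whenever $r\ge m$ (which you use yourself in the last step), it supplies all of $F$.
\end{itemize}
Then no root edge stays missing and no edge $f$ has to be chosen. The only defects are those of the $\W$ template: either $pq$ alone, or at most two independent root--nonroot edges. Combining the two models gives $\Ke$ directly, and your worry about the $K_{2,\downarrow5}$ template disappears. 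This is exactly the paper's argument.

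\textbf{A smaller problem: neither side quite heavy, $d=8$.} Components adjacent to at least four vertices of $S$ are not enough. Contracting one on each side can leave the missing edges $px$, $pq$, $qy$, and $K_7$ minus a $P_4$ has only $18$ edges, so it does not contain $\Ke$. Your own count already settles the case. The identity $r_1+r_2=m+2\ge2$ with $r_i\le1$ forces $m=0$ and $r_1=r_2=1$. A side of density one that is not quite heavy contains a nonroot adjacent to all five roots. These two vertices together with $S$ give $K_7^-$.

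The same reasoning is the clean version of your $d=9$ case. Both sides have such a vertex, and nothing needs to ``cover'' the missing root edge, since it is independent of $pq$.

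\textbf{The remaining steps are fine.} The density identity and the use of Lemma~\ref{lem:joining} when both densities are at least two are correct. The final rooted $K_5$ step also holds, though ``$\T_{\max(2,m)}$'' should simply read $\T_{r_1}\supseteq\Sc_{5,m}$, since $r_1$ may equal $1$.
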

\begin{proof}
The corresponding calculation in~\cite[Lemmas~5.7 and~5.8]{DNR26} uses global density $-7$ and a side-density sum at least $m+3$. Here the sum is $m+10-d$, and Lemma~\ref{lem:joining} handles the smaller threshold when both side densities are at least two. The extra $K_5$ exclusion for $d=9$ will ensure that the sum is still at least two.

Let the root set of a 5-separation have $m$ missing edges, and let its side densities be $r_1,r_2$. Counting edges gives
\begin{equation}\label{eq:sides}
r_1+r_2=m+10-d.
\end{equation}
When $d=9$, the absence of $K_5$ gives $m\ge1$. If neither side is quite heavy, then $r_1,r_2\le1$, and equality in the resulting bound forces $r_1=r_2=1$ and $m=d-8$. Each side then contains a vertex adjacent to all five roots. Together with the roots these vertices give $\Ke$. The only possible missing edges are the edge between them and, when $d=9$, one root edge.

If both sides are quite heavy, Lemma~\ref{lem:consistency}(1) makes both $4$-light. If one has density one, use Theorem~\ref{thm:inputs}(3) there. The other side has density $m+9-d\ge m$ and supplies all $m$ missing root edges by Theorem~\ref{thm:inputs}(1). The weak model with the root clique gives $\Ke$. If both densities are at least two, apply Lemma~\ref{lem:joining}. Thus exactly one side is quite heavy.

The other density is at most one, so the heavy side has density at least $m+9-d\ge m$. For $0\le m\le10$, the table defining $\T_r$ shows that $\T_r$ contains every $m$-edge graph whenever $r\ge m$. If the heavy side is $4$-light, it therefore supplies all missing root edges. Retaining existing root edges gives its rooted $K_5$ model.
\end{proof}
\needroom{6\baselineskip}
\begin{lemma}\label{lem:noncrossing}
Under the hypotheses of Lemma~\ref{lem:orienting}, there is no dense $(\le5)$-bifragment for which every 5-boundary member is quite heavy.
\end{lemma}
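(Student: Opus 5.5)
Suppose, for a contradiction, that $(Y,Z)$ is a dense bifragment of $G$ with $|\dd Y|,|\dd Z|\le5$, and that every member with boundary five is quite heavy. Since $G$ is $4$-bilight, at least one member, say $Y$, has $|\dd Y|=5$; so $Y$ is quite heavy. The plan is to reduce to the situation of Lemma~\ref{lem:orienting} or Lemma~\ref{lem:consistency} and derive a contradiction there. We split according to $|\dd Z|$.

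\emph{Case 1: $|\dd Z|\le4$.} Let $C=V(G)\setminus Z$ and $D=Z\cup\dd Z$. This is a separation of order at most four. Because $Y$ and $Z$ are disjoint and nonadjacent, $Y\cup\dd Y\subseteq C$. Lemma~\ref{lem:consistency}(2) then gives $\rho_4(R_{C,D})\le0$. But $\rho_4(R_{C,D})=\rho_4(G,Z)>0$, a contradiction.

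\emph{Case 2: $|\dd Z|=5$.} Now both $Y$ and $Z$ are quite heavy 5-fragments. Consider the 5-separation $(A,B)$ with $B=Y\cup\dd Y$ and $A=V(G)\setminus Y$. Its right-hand side $R_{A,B}=R_Y$ is quite heavy. By Lemma~\ref{lem:orienting}, exactly one side of every 5-separation is quite heavy. Hence the left-hand side $R_{B,A}$ is not quite heavy, so $\rho_4(R_{B,A})\le1$.

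We now show that this left side is nevertheless heavy, by exploiting that it contains the dense fragment $Z$. I would argue in the following order.
\begin{itemize}
\item $R_{B,A}$ is $4$-light by Lemma~\ref{lem:consistency}(1). Similarly, $R_Z$ is $4$-light.
\item The interior $A\setminus B$ contains $Z$. Apply Lemma~\ref{lem:orienting} to the 5-separation $(V(G)\setminus Z,\,Z\cup\dd Z)$. Then $R_Z$ is the unique quite heavy side, and it has a rooted $K_5$ minor on $\dd Z$.
\item Take a linkage from $\dd Y$ to $\dd Z$ inside $R_{B,A}$. If it is full, contract it. The $K_5$ model on $\dd Z$ then transfers to a $K_5$ model on $\dd Y$ inside the left side. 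Combining this with a $\W$ model on the right side (Theorem~\ref{thm:inputs}(3)) gives $\Ke$, contradicting $\Ke$-minor-freeness.
\item If the linkage is not full, an isolator has order at most four. Lemma~\ref{lem:nonfull}, applied to $R_{B,A}$ rooted at $\dd Y$ with the quite heavy right side containing a $\W$ template, then produces a reducible fragment. We check that at least $b$ vertices remain: the complement contains $Y\cup\dd Y$, which has at least six vertices. This contradicts $b$-irreducibility.
\end{itemize}

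The main obstacle is the transfer step in Case 2. The rooted $K_5$ minor lives on $\dd Z$, while the $\W$ model lives on the far side of $\dd Y$. The two boundaries may overlap, since $Y$ and $Z$ are only required to be nonadjacent. So the linkage between them must handle shared vertices as zero-length paths. The boundary clique must also be kept disjoint from the linkage interiors, which lie in $A\setminus(B\cup Z)$.

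If this direct transfer is awkward, I would instead use density counting. Sum the side densities across the two nested 5-separations, via equation~\eqref{eq:sides}. This should show that the middle region between $\dd Y$ and $\dd Z$ has density at least $m$. Theorem~\ref{thm:inputs}(1) would then supply the missing edges, as in the proof of Lemma~\ref{lem:orienting}.
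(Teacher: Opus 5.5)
Your proof is correct. Its outer structure matches the paper's:
\begin{itemize}
\item $4$-bilightness forces a member with boundary five.
\item A mixed pair is ruled out by Lemma~\ref{lem:consistency}. You use part~(2) with $(C,D)=(V(G)\setminus Z,\,Z\cup\dd Z)$, while the paper uses part~(1) directly; both work.
\item For two quite heavy 5-fragments, a full $\dd Y$--$\dd Z$ linkage transfers the rooted $K_5$ of Lemma~\ref{lem:orienting} from $R_Z$ to $\dd Y$, where it is glued to the $\W$ template of $R_Y$.
\end{itemize}

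The genuine difference is the non-full-linkage subcase. The paper takes a Menger separation $(C,D)$ of order at most four between the two closed neighbourhoods. It then uses Lemma~\ref{lem:consistency}(2) to make $R_{C,D}$ $4$-light of nonpositive density and appeals to Lemma~\ref{lem:enclosing}. For $b=4$ this needs the separate check \eqref{eq:closedinterior} via the $|C|=7$ count.

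You instead apply Lemma~\ref{lem:nonfull} directly to the 5-rooted graph $R_{B,A}=(G-Y,\dd Y)$, which is $4$-light by Lemma~\ref{lem:consistency}(1). The root separation you use is $(V(G)\setminus(Y\cup Z),\,Z\cup\dd Z)$, whose right-hand side $R_Z$ has a $\W$ model by Theorem~\ref{thm:inputs}(3). Lemma~\ref{lem:nonfull} allows five roots with no density hypothesis, so this is legitimate. The fragment it returns avoids $Y\cup\dd Y$, so it has the same boundary, density and rooted model in $G$, and its complement has at least seven vertices.

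Your route makes the $b=4$ size check automatic and skips Lemma~\ref{lem:consistency}(2), Lemma~\ref{lem:enclosing} and \eqref{eq:closedinterior} at this point. The paper's route mainly keeps the argument parallel to the noncrossing argument of Dvo\v r\'ak, Norin and Rahman.

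A few loose ends remain.
\begin{itemize}
\item Your remark that $R_{B,A}$ is not quite heavy, and the plan to show it is ``nevertheless heavy'', are never used. Your bullets reach a contradiction directly, so drop them.
\item You need $R_Y$, not only $R_Z$, to be $4$-light before invoking Theorem~\ref{thm:inputs}(3). This follows from the symmetric application of Lemma~\ref{lem:consistency}(1).
\item In the full case, choose the paths to meet $Y\cup\dd Y$ only at their first vertices and $Z\cup\dd Z$ only at their last. Then each vertex of $\dd Y\cap\dd Z$ carries a zero-length path and lies in exactly one combined branch set.
\item State explicitly that the separation you feed to Lemma~\ref{lem:nonfull} is the one with right side $R_Z$, not $R_Y$.
\item The density-counting fallback is unnecessary, and it is too vague to check as written.
\end{itemize}
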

\begin{proof}
We use the noncrossing argument of~\cite[Lemma~5.9 and Corollary~5.10]{DNR26}. The rooted clique now comes from Lemma~\ref{lem:orienting} at density $-d$, and the only additional size check is for an enclosing reduction when $b=4$.

\smallskip\noindent\emph{Two quite heavy 5-fragments.}
Suppose that $Y,Z$ are nonadjacent 5-fragments and both are quite heavy. By Lemma~\ref{lem:consistency}(1), each associated rooted graph is $4$-light. It is a rooted side inside the complete opposite side of the other fragment. We claim that their closed neighbourhoods are joined by five vertex-disjoint paths. Otherwise Menger's theorem gives a separation $(C,D)$ of order at most four with
\[
Y\cup\dd_GY\subseteq C,\qquad Z\cup\dd_GZ\subseteq D.
\]
Lemma~\ref{lem:consistency}(2) gives $|C\setminus D|\ge3$, nonpositive density and $4$-lightness on the right.

For $b=3$, these facts suffice to apply Lemma~\ref{lem:enclosing}, contradicting the heavy side on $Z$. For $b=4$ we also need
\begin{equation}\label{eq:closedinterior}
|(C\setminus D)\cup\dd_G(C\setminus D)|\ge4.
\end{equation}
Put $E=C\setminus D$ and $Q=C\cap D$. If~\eqref{eq:closedinterior} fails, $|E|\ge3$ forces $|E|=3$ and $\dd_GE=\varnothing$. In particular there is no edge from $E$ to $Q$. Quite heaviness implies $|Y|\ge2$, so $|C|\ge|Y|+5\ge7$. On the other hand, $|C|=|E|+|Q|\le3+4=7$. Equality gives
\[
|C|=7,\qquad |Q|=4,\qquad |Y|=2,
\qquad C=Y\cup\dd_GY.
\]
If both vertices of $Y$ lie in $E$, their boundary has size at most one. If both lie in $Q$, it has size at most two. Here no vertex outside $C$ can be a neighbour of $Y$, and there are no edges between the two blocks. Both possibilities contradict $|\dd_GY|=5$. If instead one vertex lies in each block, they are nonadjacent and have at most two and three boundary neighbours, respectively. Consequently
\[
\rho_4(G,Y)\le0+(2+3)-4\cdot2=-3,
\]
contrary to positive density. This proves~\eqref{eq:closedinterior}, and Lemma~\ref{lem:enclosing} again excludes the separator.

Normalize the five paths to run from $\dd_GY$ to $\dd_GZ$, with interiors avoiding both closed neighbourhoods. A shared boundary vertex uses a zero-length path. Each boundary has five vertices, so all its vertices are used exactly once. Apply Theorem~\ref{thm:inputs}(3) to $R_Y$ and Lemma~\ref{lem:orienting} to $R_Z$. Transfer the rooted clique along the paths and combine it with the weak template on $Y$. The interiors of these models are disjoint, so they give a $\Ke$ model, a contradiction.

Let $(Y,Z)$ be a bifragment as in the statement. Both boundaries cannot have size at most four, by $4$-bilightness. If both have size five, the preceding case applies. Otherwise a positive fragment of boundary at most four lies in the complete opposite side of a quite heavy 5-fragment. This contradicts Lemma~\ref{lem:consistency}(1).
\end{proof}
\needroom{6\baselineskip}
\begin{lemma}\label{lem:contractstable}
Under the hypotheses of Lemma~\ref{lem:orienting}, contracting any edge of $G$ preserves $4$-bilightness.
\end{lemma}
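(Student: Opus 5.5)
Suppose for contradiction that $J=G/e$ is not $4$-bilight, where $e=uv$ and $w$ is the contraction vertex. Choose a dense $(\le4)$-bifragment $(Y,Z)$ of $J$. The plan is to lift it to a dense $(\le5)$-bifragment of $G$ in which every $5$-boundary member is quite heavy, which contradicts Lemma~\ref{lem:noncrossing}. The lifting depends on where $w$ sits relative to $Y$ and $Z$.

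If $w\notin Y\cup Z\cup\dd_JY\cup\dd_JZ$, the two fragments, their boundaries and their densities are identical in $G$. This contradicts $4$-bilightness of $G$.

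If $w\in Y$, set $Y'=(Y\setminus\{w\})\cup\{u,v\}$. Its boundary is unchanged, and its density is at least that of $Y$, since contraction only loses edges. Then $(Y',Z)$ is a dense $(\le4)$-bifragment of $G$, again a contradiction. The case $w\in Z$ is symmetric.

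So assume, after symmetry, that $w\in\dd_JY$. Possibly $w\in\dd_JZ$ as well. In $G$, $Y$ is still a fragment with $\dd_GY\subseteq(\dd_JY\setminus\{w\})\cup\{u,v\}$, so $|\dd_GY|\le5$. Its density in $G$ is
\[
\rho_4(G,Y)=\rho_4(J,Y)+c_Y,\qquad c_Y=|N(u)\cap N(v)\cap Y|.
\]
If $|\dd_GY|\le4$, the density has not dropped and $Y$ is a positive small fragment. Treat $Z$ the same way. Each of $Y,Z$ then becomes either a positive $(\le4)$-fragment of $G$ or a positive $5$-fragment. $Y$ and $Z$ remain disjoint and nonadjacent in $G$, because $u,v$ lie outside both. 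If both are small, $4$-bilightness of $G$ is contradicted. Otherwise Lemma~\ref{lem:noncrossing} applies, provided each $5$-boundary member is quite heavy.

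The main obstacle is quite heaviness of a $5$-boundary lift with density exactly one, where $c_Y=0$ and $\rho_4(J,Y)=1$. In that case $Y$ must have no vertex adjacent to all of $\dd_GY$. Suppose some $y\in Y$ is adjacent to all five vertices of $\dd_GY$, including both $u$ and $v$. Then $y\in N(u)\cap N(v)\cap Y$, so $c_Y\ge1$ and the density is at least two, which is a contradiction. Hence a density-one lift has no full vertex and is quite heavy, and density at least two is quite heavy by definition. The hypotheses of Lemma~\ref{lem:orienting}, namely $\rho_4(G)=-d$ and the absence of $K_5$ when $d=9$, are needed only to invoke Lemma~\ref{lem:noncrossing}. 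I would also double-check the degenerate situation where $u$ or $v$ lies in $\dd_GY$ without that vertex being counted, so that $|\dd_GY|$ is smaller than five. That case falls into the small-fragment branch, so it is harmless.
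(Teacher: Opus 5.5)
There is a genuine gap in your case $w\in Y$ (and symmetrically $w\in Z$). Your treatment of the case $w\in\dd_JY$ or $w\in\dd_JZ$, including the quite-heaviness argument via $c_Y$, matches the paper's Case~2.

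The gap is the claim that $\rho_4(G,Y')\ge\rho_4(J,Y)$ because ``contraction only loses edges''. This ignores the vertex term. Expanding $w$ into $u,v$ adds one vertex to the fragment, which costs $4$. It restores only the edge $uv$ and one edge per common neighbour of $u$ and $v$. Since every neighbour of $u$ and $v$ lies in $Y'\cup\dd_GY'$, one gets exactly
\[
\rho_4(G,Y')=\rho_4(J,Y)+|N_G(u)\cap N_G(v)|-3,
\]
which is the local form of \eqref{eq:contraction}. So the lifted fragment can lose positivity whenever $uv$ lies in at most two triangles. The lemma is stated for every edge. Moreover, in Section~\ref{sec:proofs} it is applied precisely to edges with at most three common neighbours, where this correction is never positive. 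Consequently $(Y',Z)$ need not be a dense bifragment of $G$, and that case yields no contradiction as written.

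The missing idea is to aim for reducibility of $Y'$ instead of positivity.
\begin{enumerate}
\item Choose the dense bifragment $(Y,Z)$ of $J$ minimizing $|Y|+|Z|$. Then $R^J_Y$ is $4$-light, since a smaller positive fragment inside it would give a smaller bifragment.
\item Theorem~\ref{thm:inputs}(2) now gives a boundary clique in $R^J_Y$, or a dart when $|\dd_JY|=4$. Expanding $w$ inside its branch set lifts this model to $R^G_{Y'}$, and $\dd_GY'=\dd_JY$.
\item The set $Z$ has no neighbour $u$ or $v$, so it remains a positive $(\le4)$-fragment of $G$ nonadjacent to $Y'$. By $4$-bilightness of $G$, this forces $\rho_4(G,Y')\le0$.
\item Hence $Y'$ is a reducible fragment with $|Y'|\ge2$. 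Its complement contains $Z\cup\dd_GZ$, which has at least $6\ge b$ vertices.
\end{enumerate}
This contradicts $b$-irreducibility. That standing hypothesis is essential here, yet your argument uses it only indirectly, through Lemma~\ref{lem:noncrossing}.
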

\begin{proof}
We isolate the contraction step from the proof of~\cite[Lemma~6.2]{DNR26}. That proof uses it for a low-triangle edge in a minimal counterexample at density $-7$. Here we prove preservation for every edge, using $b$-irreducibility and Lemma~\ref{lem:noncrossing} at density $-d$. No restriction on the number of triangles is needed for this step.

Let $J=G/uv$, with contraction vertex $w$. Suppose that $J$ has a dense $(\le4)$-bifragment $(Y,Z)$, chosen to minimize $|Y|+|Z|$. Each associated rooted graph is $4$-light. Indeed, a positive fragment with boundary smaller than that rooted graph's root set would be a smaller positive $(\le4)$-fragment in $J$, still nonadjacent to the other member.

\smallskip\noindent\emph{Case 1. The contraction vertex lies in an interior.}
Suppose $w\in Y$. The case $w\in Z$ is symmetric. Theorem~\ref{thm:inputs}(2) supplies a boundary clique or dart in $R_Y^J$. Expand $w$ and put
\[
Y'=(Y\setminus\{w\})\cup\{u,v\}.
\]
Its boundary in $G$ is exactly $\dd_JY$, so has size at most four. Expanding $w$ within whichever branch set contains it lifts the rooted model to $R_{Y'}^G$. The edge $uv$ preserves connectivity. Also $|Y'|\ge2$.

The set $Z$ has no neighbour $w$ in $J$, since the two interiors are nonadjacent. Thus it has no neighbour $u$ or $v$ in $G$, and its boundary and positive density are unchanged. The sets $Y',Z$ are nonadjacent, so $4$-bilightness of $G$ forces $\rho_4(G,Y')\le0$. Moreover,
\[
Z\cup\dd_GZ\subseteq V(G)\setminus Y',\qquad
|Z\cup\dd_GZ|\ge6,
\]
where the last inequality is the positive-fragment estimate from Lemma~\ref{lem:stability}. Hence $Y'$ is reducible and leaves at least $6\ge b$ vertices. This contradicts $b$-irreducibility.

\smallskip\noindent\emph{Case 2. The contraction vertex lies in neither interior.}
Both $Y,Z$ survive as the same nonempty, disjoint, nonadjacent vertex sets in $G$. Expanding a boundary vertex increases its boundary size by at most one. Each remains positive and has boundary at most five. For $T\in\{Y,Z\}$ write
\[
c_T=|N_G(u)\cap N_G(v)\cap T|.
\]
Exactly these pairs of incident edges were merged by contraction, so
\[
\rho_4(G,T)=\rho_4(J,T)+c_T>0.
\]
If $|\dd_GT|=5$, then $w\in\dd_JT$, $|\dd_JT|=4$, and both $u,v$ belong to $\dd_GT$. If $\rho_4(G,T)\ge2$, this 5-rooted side is quite heavy by definition. If its density is one, positivity and integrality force $\rho_4(J,T)=1$ and $c_T=0$. No vertex of $T$ then meets both $u$ and $v$, and in particular none meets all five boundary roots. This also makes the side quite heavy. The resulting dense $(\le5)$-bifragment in $G$ contradicts Lemma~\ref{lem:noncrossing}.
\end{proof}

\needroom{10\baselineskip}
The next two lemmas concern vertices of degree at most seven.
\begin{lemma}\label{lem:small}
Let $L$ have between five and seven vertices and minimum degree at least four. For every 5-set $S\subseteq V(L)$ it has an $S$-rooted $K_5^-$ minor. For every set of at most four prescribed roots it has the corresponding rooted clique minor.
\end{lemma}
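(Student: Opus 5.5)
We argue by a direct case analysis on $n=|V(L)|\in\{5,6,7\}$, putting $U=V(L)\setminus S$ so that $|U|\le2$. The clique statement for at most four roots will follow from the same analysis, or directly from the $K_5^-$ statement.

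\emph{The case $n=5$.} Minimum degree four forces $L=K_5$, and both assertions are immediate.

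\emph{The case $n=6$.} Here $U=\{u\}$. Every vertex misses at most one other vertex, so the complement $\bar L$ is a matching.
\begin{itemize}
\item If some edge $uz$ is present, contract it. The rooted graph on $S$ then misses only edges of $\bar L[S]$ not incident with $z$, and these are still independent missing pairs.
\item We want at most one missing pair among the five roots. The complement matching on $S$ has at most two edges. Contracting $u$ into $z$ adds adjacency from $z$ to all neighbours of $u$.
\item If $\bar L[S]$ has two edges $ab$ and $cd$, then $u$ is adjacent to all of $S$, since $\bar L$ is a matching. Contracting $uu'$ with $u'=a$ repairs $ab$, leaving only $cd$ missing.
\end{itemize}
This yields $K_5^-$.

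\emph{The case $n=7$.} Here $U=\{u_1,u_2\}$, and each vertex misses at most two others.
\begin{itemize}
\item \emph{If $u_1u_2$ is an edge}, we try to contract $U$ into a single root, or to contract $u_1$ and $u_2$ into two different roots.
\item The missing pairs of $L[S]$ form a graph $\bar L[S]$ of maximum degree at most two.
\item Each $u_i$ has at least three neighbours in $S$. Assigning $u_i$ to a root $z_i$ joins $z_i$ to all of $N(u_i)\cap S$.
\item We choose $z_1,z_2$ so that all but at most one missing pair of $\bar L[S]$ is covered. A pair $ab$ is covered if one end is some $z_i$ and the other end lies in $N(u_i)$.
\item \emph{If $u_1u_2$ is a nonedge}, each $u_i$ has at least four neighbours in $S$, so absorbing them into separate roots is easier.
\end{itemize}

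The main obstacle is this covering step when $\bar L[S]$ has several edges, for example a $5$-cycle or a path plus an edge. I would handle it by a counting argument. Counting complement degrees, the missing edges number at most $7$ in total. Each missing edge incident with $u_i$ reduces $N(u_i)\cap S$, which in turn limits how many edges $\bar L[S]$ can have. So if $\bar L[S]$ is large, then $u_1,u_2$ are nearly complete to $S$ and each can repair up to two missing pairs at one vertex. I would enumerate the possible structures of $\bar L[S]$ (a subgraph of $C_5$, or of a path plus a cycle) and choose $z_1,z_2$ as the vertices of largest degree in $\bar L[S]$.

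\emph{Clique minors on at most four roots.} With $k\le4$ roots, we extend them to a $5$-set $S$ and take the $S$-rooted $K_5^-$. If the missing pair involves the extra vertex, we simply discard that vertex. Otherwise we need a little more care: contract the extra vertex's branch set into one end of the missing pair, which is possible since $K_5^-$ minus a vertex off the missing edge still has that vertex adjacent to both ends. For $k\le3$, we delete roots from the four-root case, which suffices.
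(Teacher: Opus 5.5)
Your cases $n=5$ and $n=6$, and your passage from the $K_5^-$ statement to rooted cliques on at most four roots, are correct and agree with the paper. There is one minor slip: for $n=6$ with $\bar L[S]=\{ab,cd\}$, the vertex $u$ is adjacent to $a,b,c,d$ but may miss the fifth root. Contracting $ua$ still works.

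The genuine gap is $n=7$. This case carries essentially all the content of the lemma, and you only announce it (``I would enumerate''). Moreover, the two concrete ingredients you offer do not suffice as stated.
\begin{itemize}
\item Choosing $z_1,z_2$ as vertices of largest degree in $\bar L[S]$ fails for the 5-cycle $v_1\cdots v_5$. With $z_1=v_1$ and $z_2=v_2$, the pairs $v_3v_4$ and $v_4v_5$ stay missing, so you get $K_5$ minus a 2-edge path rather than $K_5^-$. You must take two vertices at distance two, such as $v_1,v_3$.
\item Your notion of covering credits a repair only through $N(u_i)$ at the single root $z_i$. It cannot handle the configuration that is actually delicate, which is not the cycles or paths you single out. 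Take $\bar L[S]=\{ab,cd\}$ with neither outside vertex adjacent to both ends of either pair. Then the nonedges from $U$ form perfect matchings onto $\{a,b\}$ and onto $\{c,d\}$; say $u_1$ misses $b,d$ and $u_2$ misses $a,c$. No assignment of $u_1,u_2$ to distinct roots covers either pair in your sense.
\end{itemize}

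The missing idea in that configuration is that the complement degree bound forces $u_1u_2\in E(L)$. You can then absorb $u_1$ into $a$ and $u_2$ into $b$, using $u_1u_2$ as the edge between the two enlarged branch sets (the paper's choice). Alternatively, absorb all of $U$ into $a$.

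For the other configurations you need one observation. A root of $\bar L[S]$-degree two is adjacent to both $u_1$ and $u_2$, while a root of $\bar L[S]$-degree one misses at most one of them. With it, the remaining cases are settled as follows.
\begin{itemize}
\item A 5-cycle, a consecutively labelled 4-cycle, or a triangle $v_1v_2v_3$ in $\bar L[S]$: absorb the two outside vertices into $v_1$ and $v_3$.
\item A spanning path $P_5=v_1\cdots v_5$: absorb one outside vertex into $v_1$ through an available edge $v_1u_i$, and the other into $v_3$.
\item Any remaining forest with at most three edges containing a path $v_1v_2v_3$: absorb both outside vertices into $v_2$.
\end{itemize}
Until this case analysis is carried out, the $K_5^-$ assertion for seven vertices is not proved, and hence neither is the lemma.
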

\begin{proof}
This is the local argument of~\cite[Lemma~6.4 and Corollary~6.5]{DNR26}. We include it to isolate its hypotheses. For five vertices, $L$ is complete. For six vertices its complement is a matching. If $uv$ is a missing edge on $S$, the outside vertex is adjacent to both $u,v$. Absorb it into $u$. At most one missing root edge remains.

Suppose $|V(L)|=7$ and let $z_1,z_2$ be outside $S$. The complement has maximum degree two. Any vertex of complement degree two within $S$ is adjacent in $L$ to both $z_i$. If the complement on $S$ is a 5-cycle $v_1\ldots v_5$, absorb $z_1$ into $v_1$ and $z_2$ into $v_3$. The same choice works if it contains a triangle or a 4-cycle on consecutively labelled vertices. At most one missing root edge remains. Otherwise that complement is a forest. If it has four edges, it is $v_1\ldots v_5$. In this case, choose the outside labels so that $v_1z_1$ is an edge of $L$, and absorb $z_1$ into $v_1$ and $z_2$ into $v_3$. If it has at most three edges and contains a path $v_1v_2v_3$, absorb both outside vertices into $v_2$. Each endpoint meets at least one outside vertex in $L$, so only one root nonedge can remain.

The remaining complement on $S$ is a matching. Only the case of two edges $ab,cd$ needs consideration. If an outside vertex meets both ends of either pair in $L$, absorb it into an end. Otherwise the complement contains matchings from $\{z_1,z_2\}$ onto $\{a,b\}$ and onto $\{c,d\}$. Its degree bound gives $z_1z_2\in E(L)$. Relabel so that $az_1,bz_2\in E(L)$, and contract these two edges. Again at most the nonedge $cd$ remains.

For four prescribed roots, extend to five and use the first assertion. If both ends of the missing edge are prescribed, absorb the fifth branch set into one end. Otherwise discard it. This gives a rooted $K_4$. Restriction yields the smaller rooted cliques.
\end{proof}
\needroom{6\baselineskip}
\begin{lemma}\label{lem:neighbourhood}
Suppose additionally that $|V(G)|\ge7$, every edge of $G$ is in at least four triangles, and $v$ has degree at most seven. Put $Q=N[v]$, and let $C_i$ be the components of $G-Q$. Then
\[
|\dd C_i|\le4,\qquad \rho_4(G,C_i)\le0,\qquad \rho_4(G)\le e(G[Q])-4|Q|.
\]
\end{lemma}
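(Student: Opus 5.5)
The strategy is to use the local structure of $L=G[N(v)]$ to build rooted clique or dart models on $\dd C_i\subseteq N(v)$. Then $b$-irreducibility of $G$ (the standing hypothesis of this section) forces each $C_i$ to be small-boundary and nonpositive. The global density bound will follow by summing.

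First, record the local facts. Every neighbour $t$ of $v$ has $d_L(t)=\tau(vt)\ge4$, so $L$ has minimum degree at least four. Also $|V(L)|=d(v)\le7$. Moreover $d(v)\ge5$: some neighbour $t$ exists (if $v$ were isolated, $G$ would be disconnected with a trivially reducible fragment), and that neighbour has four common neighbours with $v$. Hence Lemma~\ref{lem:small} applies to $L$. Every set of at most four vertices of $N(v)$ carries a rooted clique minor inside $L$, and every 5-set carries a rooted $K_5^-$.

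Second, bound $|\dd C_i|\le4$. Fix a component $C=C_i$, so $\dd C\subseteq N(v)$. Suppose $|\dd C|\ge5$ and pick a 5-set $S\subseteq\dd C$. Then $G-C$ contains $v$ adjacent to the five singleton roots of a rooted $K_5^-$ in $L$. Absorbing the remaining vertices of $L$ appropriately, this side gives a rooted model of $K_6$ minus one edge on $S\cup\{v\}$. Contracting $C$ to a single vertex adjacent to at least five roots then yields $K_7$ minus at most two edges. That is a $\Ke$ model, unless the two defects (the $K_5^-$ defect and the one root missed by $C$) share an endpoint. To handle this, use the freedom in choosing $S$ when $|\dd C|\ge6$, or choose the $K_5^-$ defect away from a vertex missed by $C$. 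If $C$ meets all of $S$, there is only one defect and we are done. I expect this case analysis, ensuring independence of the two missing edges, to be the main obstacle. I would first try re-running the proof of Lemma~\ref{lem:small} with a prescribed vertex that the defect must avoid.

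Third, show nonpositivity. With $|\dd C|=k\le4$, Lemma~\ref{lem:small} gives a rooted $K_k$ on $\dd C$ in $G[Q]\subseteq G-C$. Since $G-C$ is connected through $v$, this is a boundary clique model on the side opposite to $C$. Suppose $\rho_4(G,C)>0$. Then the complement $Y=V(G)\setminus(C\cup\dd C)$ is a fragment with boundary $\dd C$ and a boundary clique minor. I would then argue as follows.
\begin{itemize}
\item Either $Y$ has nonpositive density, in which case it is reducible. Its complement has $|C\cup\dd C|\ge6\ge b$ vertices by the positive-fragment estimate in Lemma~\ref{lem:stability}, contradicting $b$-irreducibility.
\item Or $Y$ has positive density. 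Then $Y$ together with a suitable subfragment nonadjacent to $C$ yields a dense bifragment, contradicting $4$-bilightness. The natural candidate is $\{v\}\cup(N(v)\setminus\dd C)$ together with the other components; this needs care.
\end{itemize}
Hence $\rho_4(G,C_i)\le0$.

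Finally, the edge set of $G$ decomposes as $E(G[Q])$ together with, for each $i$, the edges inside $C_i$ and from $C_i$ to $\dd C_i$. Therefore
\[
\rho_4(G)=e(G[Q])-4|Q|+\sum_i\rho_4(G,C_i)\le e(G[Q])-4|Q|,
\]
which is the third inequality.
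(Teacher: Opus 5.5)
Your outline follows the paper's proof: the degree facts that make Lemma~\ref{lem:small} applicable to $L=G[N(v)]$, contracting $C_i$ with $v$ retained when $|\dd C_i|\ge5$, reducing the complement when $\rho_4(G,C_i)>0$, and summing over the pairwise nonadjacent components. The route works. However, both steps you flag as delicate are misdiagnosed, and one of them contains a false claim.

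\emph{The boundary step.} There is never a root missed by $C$. Since $S\subseteq\dd C$, each vertex of $S$ has a neighbour in $C$, so $C$ meets all five root branch sets. What $C$ always misses is $v$, because $C\cap N[v]=\varnothing$. So your assertion that ``if $C$ meets all of $S$, there is only one defect'' is false. There are always two missing adjacencies: the $K_5^-$ defect inside $S$, and the pair $\{v\},C$. These two are automatically independent, so you get a $\Ke$ model with no case analysis and no strengthened Lemma~\ref{lem:small}.

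\emph{The nonpositivity step.} Your second bullet is immediate. The ``natural candidate'' $\{v\}\cup(N(v)\setminus\dd C)\cup\bigcup_{j\ne i}C_j$ is exactly $Y$. The sets $Y$ and $C$ are disjoint and nonadjacent, and both have boundary inside $\dd C$, of size at most four. If both had positive density they would form a dense $(\le4)$-bifragment. So $4$-bilightness gives $\rho_4(G,Y)\le0$ at once, and only your first bullet is needed, exactly as in the paper.

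In that first bullet, justify where the clique model lives directly, not through connectivity of $G-C$. Its root branch sets lie in $N(v)$, and each contains exactly one vertex of $\dd C$, so the model lies in $G[Y\cup\dd C]$. Moreover $\dd_GY=\dd C$, because every vertex of $\dd C$ is adjacent to $v\in Y$, so the clique is on the full boundary.
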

\begin{proof}
This is the neighbourhood argument of~\cite[Lemma~6.6]{DNR26}, with its local degree hypotheses stated separately. We use $b$-irreducibility only to exclude an isolated vertex and to exclude a boundary-clique reduction whose complement has at least six vertices. Thus the argument works for both $b=3$ and $b=4$. An isolated vertex would be reducible, since $|V(G)|\ge7$. Every other vertex is incident with an edge in at least four triangles and hence has degree at least five. Let $L=G[N(v)]$. It has between five and seven vertices and minimum degree at least four. Since $v$ has no neighbour outside $Q$, every $\dd C_i$ is contained in $N(v)$. If some $C_i$ has at least five boundary vertices, choose five of them as roots in Lemma~\ref{lem:small}. Contract $C_i$, retain $v$, and combine with the rooted $K_5^-$. The only possible missing edges are one root edge and the edge between $v$ and the contracted component, which are independent. This gives $\Ke$, a contradiction.

If $\rho_4(G,C_i)>0$, put $T=\dd C_i$ and $W=V(G)\setminus(C_i\cup T)$. This set contains $v$, is nonadjacent to $C_i$, and has boundary contained in $T$. Thus $\rho_4(G,W)\le0$. Apply Lemma~\ref{lem:small} to $L$ with root set $T$, and retain only the root branch sets corresponding to $\dd_GW$. Each retained branch set avoids the other vertices of $T$, and every vertex of $L-T$ lies in $W$. The resulting boundary clique model therefore lies in $G[W\cup\dd_GW]$. Hence $W$ is reducible, and its complement $C_i\cup T$ has at least six vertices. This contradicts $b$-irreducibility. Finally the components are pairwise nonadjacent, so
\begin{equation}\label{eq:densityneighbourhood}
\rho_4(G)=e(G[Q])-4|Q|+\sum_i\rho_4(G,C_i)\le e(G[Q])-4|Q|.\qedhere
\end{equation}
\end{proof}

\section{Proofs of the main theorems}\label{sec:proofs}
\subsection{Proof of Theorem~\ref{thm:aux}}\label{subsec:aux}
Suppose Theorem~\ref{thm:aux} is false and choose a counterexample $G$ with the fewest vertices. By~\eqref{eq:known},
\begin{equation}\label{eq:auxdensity}
\rho_4(G)=-8,\qquad |V(G)|\ge7.
\end{equation}
The order bound follows also from $\binom{n}{2}\ge4n-8$ and $n\ge3$. Reducing an inclusionwise maximal reducible fragment preserves $4$-bilightness, does not decrease density, and leaves at least three vertices. Minimality therefore shows that $G$ is $3$-irreducible.

Apply Lemmas~\ref{lem:orienting}--\ref{lem:contractstable} with $b=3$ and $d=8$. If an edge $uv$ has at most three common neighbours, then $J=G/uv$ is $4$-bilight and
\[
\rho_4(J)=\rho_4(G)+3-|N(u)\cap N(v)|\ge-8.
\]
It is a smaller counterexample, a contradiction. Every edge is thus in at least four triangles. Since $G$ is $3$-irreducible, it has no isolated vertex. Therefore $G$ has minimum degree at least five. Its average degree is less than eight, so choose a vertex $v$ of degree between five and seven, and put $Q=N[v]$. Lemma~\ref{lem:neighbourhood} gives
\[
e(G[Q])\ge4|Q|-8.
\]
For $|Q|=6$ this exceeds the number of possible edges. For $|Q|=7$, it gives at least twenty edges and hence an $\Ke$ subgraph. It remains that $|Q|=8$ and $e(G[Q])\ge24$.

The complement on $N(v)$ has maximum degree at most two and at most four edges. If it has at most three edges and a vertex of degree two, delete such a vertex. This leaves at most one nonedge. Otherwise its edges form a matching, and deleting an endpoint of a matching edge leaves at most two independent nonedges. If there are no nonedges, delete any vertex of $N(v)$. Each case gives $\Ke$ on seven vertices. If the complement $F$ on $N(v)$ has four edges, its nontrivial components are among the six types below. Isolated vertices are omitted, and path vertices are labelled consecutively. The operations are performed in $G[Q]$. The last column records its remaining nonedges.
\begin{center}
\small
\begin{tabular}{lll}
\toprule
Nontrivial components of $F$&Operation in $G[Q]$&Remaining nonedges\\
\midrule
$C_3\sqcup K_2$&Delete a triangle vertex&$2K_2$\\
$P_5=x_1\cdots x_5$&Delete $x_3$&$x_1x_2,\ x_4x_5$\\
$P_4\sqcup K_2$&Delete $x_2$ of $P_4$&$x_3x_4$ and the $K_2$ edge\\
$P_3\sqcup2K_2$&Delete the $P_3$ centre&$2K_2$\\
$2P_3$&Contract the two centres&None\\
$C_4$&Retain for the next step&$C_4$\\
\bottomrule
\end{tabular}
\end{center}
The list is exhaustive because a graph of maximum degree two is a union of paths and cycles, and four disjoint edges would require eight vertices in $N(v)$. In the $2P_3$ row the two centres are adjacent in $G$. Their sets of nonneighbours are disjoint. The contracted vertex is therefore adjacent to all remaining vertices, which also form a clique, giving $K_7$. Every other row except $C_4$ gives a $\Ke$ subgraph after the indicated deletion. Thus the only remaining possibility is
\begin{equation}\label{eq:exception}
G[Q]=K_4\vee(2K_2).
\end{equation}
Write $K$ for the 4-vertex clique and $P,T$ for the two pairs in~\eqref{eq:exception}. Each pair induces an edge, and there are no edges between the pairs. Thus the complement on $P\cup T$ is $K_{2,2}$, a 4-cycle. The other four vertices of $Q$ are universal in $G[Q]$. Each component $C_i$ of $G-Q$ has nonpositive density by Lemma~\ref{lem:neighbourhood}. Since $e(G[Q])=24$, equality in~\eqref{eq:densityneighbourhood} and~\eqref{eq:auxdensity} gives $\rho_4(G,C_i)=0$ for every $i$. No component meets both $P$ and $T$. Otherwise choose a path from $p\in P$ to $t\in T$ with its internal vertices in that component. Absorb all those internal vertices into $p$, keeping every vertex of $Q$ in a distinct branch set. This adds the edge $pt$ to the retained graph on $Q$. Any other new edges can be deleted. The complement $C_4$ would become a 3-edge path. Deleting an internal vertex of this path gives $K_7^-$.

Let $Y$ consist of $P$ and all components meeting $P$, and let $Z$ consist of $T$ and all components meeting $T$. No component was assigned to both sets. Distinct components of $G-Q$ have no edges between them, there is no edge between $P$ and $T$, and any component adjacent to $P$ or $T$ was included in the corresponding set. Hence $Y,Z$ are disjoint and nonadjacent, and every neighbour outside either set belongs to $K$. The zero densities of the added components give
\[
\begin{aligned}
\rho_4(G,Y)&=1+8-4\cdot2+\sum_{C_i\text{ meeting }P}\rho_4(G,C_i)=1,\\
\rho_4(G,Z)&=1+8-4\cdot2+\sum_{C_i\text{ meeting }T}\rho_4(G,C_i)=1.
\end{aligned}
\]
Here the first term counts the edge within the pair, and the second its eight edges to $K$. This dense small bifragment contradicts $4$-bilightness and completes the proof of Theorem~\ref{thm:aux}.

\subsection{Proof of Theorem~\ref{thm:bilight}}\label{subsec:critical}
Suppose Theorem~\ref{thm:bilight} is false, and choose a counterexample $G$ with the fewest vertices. It has neither an $\Ke$ minor nor a $K_6$ subgraph. By Theorem~\ref{thm:aux},
\begin{equation}\label{eq:criticaldensity}
\rho_4(G)=-9,\qquad |V(G)|\ge7.
\end{equation}
Orders four and five cannot meet the edge bound, and at order six the only graph meeting it is $K_6$.
\needroom{6\baselineskip}
\begin{lemma}\label{lem:criticalirreducible}
The graph $G$ is $4$-irreducible.
\end{lemma}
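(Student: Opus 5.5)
Suppose, for contradiction, that $G$ has a reducible $(\le4)$-fragment $Y$ with $|V(G)\setminus Y|\ge4$. I will choose $Y$ inclusionwise maximal among such fragments and pass to its reducent $H=(G-Y)\cup D$. By Lemma~\ref{lem:stability}(2) with $b=4$, $H$ is $4$-bilight. It is a minor of $G$, so it has no $\Ke$ minor, and it has fewer vertices than $G$. It also has at least four vertices, since $V(G)\setminus Y$ survives and has size at least four.

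The density comparison comes from \eqref{eq:increment}, which gives $\rho_4(H)=-9-\rho_4(G,Y)+a\ge-9$. If this inequality were strict, then $\rho_4(H)\ge-8$. Since $H$ is $4$-bilight with at least four vertices, Theorem~\ref{thm:aux} would then give $\Ke\preccurlyeq H\preccurlyeq G$, a contradiction. Hence equality holds, which forces $\rho_4(G,Y)=0$ and $a=0$. Thus no edge is added between original boundary vertices, and every boundary edge demanded by $D$ is already present in $G$.

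Now $H$ is $4$-bilight, has at least four vertices and exactly $4|V(H)|-9$ edges, and has no $\Ke$ minor. By minimality of $G$, $H$ must contain a $K_6$ subgraph $C$. The main step is to show that $C$ already lies in $G$, contradicting the assumption that $G$ has no $K_6$ subgraph. Every edge of $H$ between vertices of $V(G)\setminus Y$ is an edge of $G$, because $a=0$. So if $C$ avoids the new dart vertex $y$, then $C\subseteq G$.

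It remains to handle the case where $D$ is a dart and $y\in C$; this is the step I expect to be the obstacle. The neighbours of $y$ in $H$ are exactly the four roots $Q=\dd_GY$. So $C$ would need $y$ together with five neighbours of $y$, but $y$ has degree four in $H$. Hence $y\notin C$, and this case cannot occur.

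Combining the steps, $C$ is a $K_6$ subgraph of $G$, which is the required contradiction. I expect the only delicate points to be checking that $D$'s nonroot has degree exactly four, and confirming that Lemma~\ref{lem:stability}(2) applies with the chosen maximal $Y$ under the $b=4$ size condition.
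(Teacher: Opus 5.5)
Your proposal is correct and follows the paper's proof essentially step for step. The shared steps are the maximal reducible fragment, Lemma~\ref{lem:stability}(2) with $b=4$, the increment identity combined with Theorem~\ref{thm:aux} to force $\rho_4(G,Y)=0$ and $a=0$, and the degree-four argument excluding the dart vertex from a $6$-clique; the only difference is order, since you use minimality first to obtain a $K_6$ in the reducent and lift it to $G$, whereas the paper first shows the reducent has no $K_6$ subgraph and then contradicts minimality.
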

\begin{proof}
Suppose otherwise, and choose an inclusionwise maximal reducible fragment $Y$ subject to $|V(G)\setminus Y|\ge4$. Put $Q=\dd_GY$ and let $J$ be the reducent. By Lemma~\ref{lem:stability}(2), $J$ is $4$-bilight. It is a minor of $G$ and therefore has no $\Ke$ minor.

Let $\varepsilon=0$ for a clique reduction and $\varepsilon=1$ for a dart reduction, and let $a$ be the number of edges added between surviving original vertices of $Q$. A dart introduces one new vertex and its four incident edges. A clique reduction introduces no vertex. Consequently
\[
|V(J)|=|V(G)|-|Y|+\varepsilon
\]
and
\[
e(J)=e(G)-e(G[Y])-e_G(Y,Q)+a+4\varepsilon.
\]
Subtracting four times the vertex count gives
\[
\rho_4(J)-\rho_4(G)=-\rho_4(G,Y)+a\ge0.
\]
Here both $-\rho_4(G,Y)$ and $a$ are nonnegative integers.

Since $|V(J)|\ge|V(G)\setminus Y|\ge4$, Theorem~\ref{thm:aux} applies to $J$. Its absence of a $\Ke$ minor implies $\rho_4(J)<-8$, and integrality gives $\rho_4(J)\le-9$. Combining this with~\eqref{eq:criticaldensity} and the preceding inequality yields
\begin{equation}\label{eq:zero}
\rho_4(J)=\rho_4(G)=-9,\qquad
\rho_4(G,Y)=0,\qquad a=0.
\end{equation}
The last two equalities follow because their nonnegative integer contributions sum to zero. The equality $a=0$ means that the reduction has added no edge between surviving original vertices. This is the property needed below.

In particular, the graph induced by the surviving original vertices of $J$ is exactly $G-Y$. In a clique reduction, every vertex of $J$ is such a vertex, so a $K_6$ subgraph of $J$ would already be a $K_6$ subgraph of $G$. In a dart reduction the only other vertex has degree four, whereas every vertex in a $K_6$ subgraph has at least five neighbours. It cannot belong to a 6-clique, and any 6-clique avoiding it would again be present in $G$. Therefore $J$ has no $K_6$ subgraph.

Finally, $J$ has fewer vertices than $G$. A clique reduction removes the nonempty set $Y$, while a dart reduction replaces at least two vertices by one. Hence $J$ is a smaller $4$-bilight graph on at least four vertices with $\rho_4(J)=-9$, no $\Ke$ minor and no $K_6$ subgraph. This contradicts the minimal choice of $G$.
\end{proof}
\needroom{6\baselineskip}
\begin{lemma}\label{lem:noK5}
$G$ has no $K_5$ subgraph.
\end{lemma}
\begin{proof}
Suppose that $C$ is a 5-vertex clique. We first show that $(G,C)$ is $4$-light. If not, choose a positive $(\le4)$-fragment $Y$ avoiding $C$, put $T=\dd_GY$, and write $k=|T|\le4$ and $B=G-Y$. The set $Y\cup T$ has at least six vertices by the positive-fragment estimate in the proof of Lemma~\ref{lem:stability}.

Suppose first that $B$ has $k$ vertex-disjoint paths from $T$ to $C$, with zero-length paths allowed for vertices in $T\cap C$. Truncate each path at its first vertex of $C$. Each vertex of $T$ is the initial vertex of exactly one path, and the endpoints in $C$ are distinct. Use each path as one root branch set. The edges between their distinct endpoints in $C$ give a $T$-rooted $K_k$ model in $B$. Every vertex of $T$ starts its own path, so no retained branch set can pass through a different vertex of $T$.

Set $W=V(G)\setminus(Y\cup T)$. Since $C\cap Y=\varnothing$ and $|C|=5>k$, $W$ is nonempty. It is nonadjacent to $Y$ and has boundary contained in $T$. Thus $4$-bilightness forces $\rho_4(G,W)\le0$. Restrict the clique model to the roots in $\dd_GW$ if necessary. Its retained branch sets avoid all other vertices of $T$, so it lies in $G[W\cup\dd_GW]$. Hence $W$ is reducible and its complement $Y\cup T$ has at least six vertices, contrary to Lemma~\ref{lem:criticalirreducible}. If $k=0$, the same conclusion follows by deleting the component set $W$. The boundary clique is empty.

Otherwise, let $q<k$ be the maximum number of disjoint $T$--$C$ paths. By Menger's theorem, choose a separation $(I,J)$ of $B$ of order $q$ with $T\subseteq I$ and $C\subseteq J$. Put $Q=I\cap J$ and $U=J\setminus I$. At least $5-q$ vertices of $C$ lie in $U$, so $U$ is nonempty. Also $U\cap T=\varnothing$ and $\dd_GU\subseteq Q$. The separation gives this in $B$, and no vertex of $U$ can meet $Y$ because all neighbours of $Y$ outside $Y$ lie in $T$.

A maximum family of $q$ disjoint paths meets every vertex of $Q$, each on a different path. Indeed, every path must cross $Q$, and there are only $q$ separator vertices. Each path contains exactly one separator vertex. Indeed, if one contained two, the other $q-1$ disjoint paths could not all cross the remaining $q-2$ vertices. After its separator vertex, a path stays in $J$, since returning from $I\setminus J$ would require a second separator vertex. Its tail to $C$ is therefore a branch set in $J$ with exactly one root in $Q$. The endpoints in $C$ are distinct and mutually adjacent, so these tails give a $Q$-rooted clique model. Restrict it to the actual boundary $\dd_GU$. The other separator vertices belong to discarded root branch sets. This gives a boundary clique model in $G[U\cup\dd_GU]$. When $q=0$, the empty boundary clique has the same conclusion.

The set $U$ is nonadjacent to the positive fragment $Y$, so $\rho_4(G,U)\le0$ by $4$-bilightness. Its complement contains $Y\cup T$ and therefore at least six vertices. Thus $U$ is another forbidden reducible fragment. This proves that $(G,C)$ is $4$-light.

Now
\[
\rho_4((G,C))=\rho_4(G)+20-e(G[C])=-9+20-10=1.
\]
No vertex outside $C$ meets all five vertices of $C$, since such a vertex would form a $K_6$ subgraph with $C$. Hence $(G,C)$ is quite heavy, and Theorem~\ref{thm:inputs}(3) supplies a weak $\W$ model. Retain the already present root clique on $C$. The resulting seven branch sets miss only the edge between the two nonroots, or at most two independent root--nonroot edges. They therefore contain a $\Ke$ model, the required contradiction.
\end{proof}
We can now apply the structural lemmas with $b=4$, $d=9$. In particular all edge contractions preserve $4$-bilightness. If an edge $uv$ has at most three common neighbours, then $J=G/uv$ satisfies $\rho_4(J)\ge-9$. It also has no $K_6$ subgraph. A new 6-clique using the contraction vertex would have five other vertices that already form a $K_5$ in $G$, contrary to Lemma~\ref{lem:noK5}. A 6-clique not using it would already be in $G$. As $|V(J)|\ge6$, minimality is contradicted.

Every edge therefore lies in at least four triangles. Irreducibility excludes isolated vertices, so the minimum degree is at least five. The average degree is less than eight. Choose $v$ with $5\le d(v)\le7$ and put $Q=N[v]$. By Lemma~\ref{lem:neighbourhood},
\begin{equation}\label{eq:criticalQ}
e(G[Q])\ge4|Q|-9.
\end{equation}
If $|Q|=6$, this forces $G[Q]=K_6$. If $|Q|=7$, it has at most two missing edges, and deleting at most two suitable vertices leaves a $K_5$. Both cases contradict Lemma~\ref{lem:noK5}.

Thus $|Q|=8$ and $e(G[Q])\ge23$. Let $F$ be the complement of $G[N(v)]$. It has seven vertices, at most five edges and maximum degree at most two. It has no independent set of size four, since such a set together with $v$ would be a $K_5$ in $G$. If $F$ were bipartite, one part would have size at least four. Hence it has an odd cycle. A 7-cycle exceeds the edge budget. A 5-cycle uses all five edges and leaves two isolated vertices, again giving an independent 4-set. The odd cycle is therefore a triangle. The triangle is a component because $\Delta(F)\le2$. On the other four vertices there are at most two edges. With at most one edge there is an independent 3-set. With two edges sharing an endpoint, their two other endpoints and the fourth vertex form an independent 3-set. Either 3-set, together with one triangle vertex, would be an independent 4-set of $F$. Thus the two edges on the remaining vertices must be disjoint. Consequently
\[
F=K_3\sqcup2K_2.
\]
Label the triangle $abc$ and the two other edges $de,fg$. The edge $ad$ is present in $G[Q]$. Contract it. The new vertex is adjacent to all six remaining vertices, and the only remaining nonedges are $bc$ and $fg$, which are independent. The resulting minor is $\Ke$, the final contradiction. This proves Theorem~\ref{thm:bilight} and hence Theorem~\ref{thm:main}.

\section*{Acknowledgements}
This work was supported by the National Natural Science Foundation of China under Grant No.~12161073, the Guangdong Basic and Applied Basic Research Foundation under Grant No.~2026A1515012764, the GDUPT Talent Recruitment Project (No.~2024rcyj1007), and the Scientific Research Funds at China University of Geosciences (Wuhan) (Project No.~2026039).

\section*{Declaration of generative AI and AI-assisted technologies}
OpenAI Codex contributed ideas and suggested a proof strategy for Theorem~\ref{thm:five}. It was also used to assist with the organization and editing of the manuscript and the preparation of reproducible diagram code. All theoretical results and proofs have been checked by the authors.

\end{document}